\documentclass[12pt]{amsart}
\usepackage{amscd,amssymb,verbatim}
\theoremstyle{plain}

\usepackage{color}
\usepackage{marginnote}
\makeatletter
\g@addto@macro\@mn@margintest{%
  \@tempswatrue
  \@reversemarginfalse
  \setlength{\marginparwidth}{2.5cm}%
  \setlength{\marginparsep}{2mm}%
}
\makeatother

\numberwithin{equation}{section}

\newcommand{\N}{\mathbb{N}}
\newcommand{\R}{\mathbb{R}}

\newcommand{\s}{\sigma}

\DeclareMathOperator{\supp}{supp}
\DeclareMathOperator{\ran}{ran}

\newtheorem{theorem}{Theorem}[section]
\newtheorem{lemma}[theorem]{Lemma}
\newtheorem{remark}[theorem]{Remark}

\newtheorem{definition}[theorem]{Definition}
\newtheorem{proposition}[theorem]{Proposition}
\newtheorem{corollary}[theorem]{Corollary}
\newtheorem{notation}[theorem]{Notation}

\newtheorem*{question}{Question}

\begin{document}

\title[Shrinking Tree Bases]{Explicitly Defined Norms on $JT_*$ Spaces}

\author{Spiros A. Argyros}
\address{National Technical University of Athens, Faculty of Applied Sciences, Department of Mathematics, Zografou Campus, 157 80, Athens, Greece.}
\email{sargyros@math.ntua.gr}

\author{Pavlos Motakis}
\address{Department of Mathematics and Statistics, York University, 4700 Keele Street, Toronto, Ontario, M3J 1P3, Canada}
\email{pmotakis@yorku.ca}
\thanks{{\em 2020 Mathematics Subject Classification:} Primary 46B03, 46B20.}


\date{\today}

\begin{abstract}

We present a Banach space $X_\alpha$, equipped with an explicit norm, whose properties resemble those of the predual $JT_*$ of the James Tree space. In particular, $X_\alpha$ admits a shrinking dyadic basis with the property that the sums of its elements over any segment are uniformly bounded. The space $X_\alpha$ is complementably $\ell_2$-saturated, and its dual $X_\alpha^*$  does not contain an isomorphic copy of $\ell_1$. Finally, the bidual $X_\alpha^{**}$ is also complementably $\ell_2$-saturated, and the quotient $X_\alpha^{**}/X_\alpha$ is isometric to $c_0(2^{\mathbb{N}})$.
\end{abstract}
\maketitle

\setcounter{tocdepth}{1}
\tableofcontents

\section*{Introduction}
The James Tree space ($JT$), introduced by R. C. James \cite{J}, is a classical Banach space that has played a significant role in the evolution of Banach space theory. Even modern constructions, such as the Gowers Tree space \cite{G}, base their definitions on the structural properties of $JT$. This space,  its predual and dual ($JT_*, JT^{*}$) have been extensively studied by numerous authors. We mention the fundamental work of J. Lindenstrauss and C. Stegall \cite{LS}, the beautiful combinatorial result of I. Amemiya and I. Ito \cite{AI}, the proof of the primarity of $JT$ by A. D. Andrew \cite{andrew}, the comprehensive monograph by H. Fetter and B. Gamboa de Buen \cite{FetterG}, and our recent joint work with M. Gonz\'alez on the isomorphic structure of the subspaces of $JT$ and $JT^*$ \cite{AGM}.

The aim of the present paper is to provide an alternative definition for $JT$-type spaces. In particular, we define an explicitly described norm on $c_{00}(T)$, where $T$ is the dyadic tree, such that the corresponding completion $X_\alpha$ resembles $JT_*$, the predual of the classical James Tree space $JT$. It is a well-known fact that while $JT_*$ admits a shrinking basis, there is no explicit formula for its norm. This is primarily due to the absence of an explicit description for the norm of the dual of the classical James space $J$ \cite{J1} (whether $J$ is endowed with the squared variation norm or the boundedly complete norm \cite{J2, L1}). Our approach is motivated by the resolution of a problem concerning the existence of a Banach space in which every Schauder basic sequence is neither boundedly complete nor shrinking. The norm we present is rooted in a more advanced construction introduced in \cite{AM}, where a norming set is defined such that the induced space contains neither a boundedly complete basic sequence nor $c_0$. Furthermore, the present approach serves as an accessible introductory step for studying the more intricate results in \cite{AM}. 

It is worth pointing out that spaces endowed with explicitly or implicitly defined norms yielding a non-reflexive space with a shrinking basis are indispensable tools for studying certain problems. This is evident for the space presented in \cite{AM18}, which is an $\mathcal{L}_\infty$-space containing neither $\ell_1$, $c_0$, nor a reflexive subspace. Such a space necessarily lacks a boundedly complete basic sequence. In fact, the norm of the space in \cite{AM18} is based on the results and methods developed in \cite{AM}.

In what follows, we describe the proposed norm and explain the necessity of its ingredients. Our primary objective is to define a norm on $c_{00}(T)$ such that the standard tree basis is shrinking, the sums of the basis elements over any segment of the tree are uniformly bounded,  the resulting space is $\ell_2$-saturated and its dual does not contain $\ell_1$.

A natural first attempt to define a shrinking norm on $c_{00}(T)$ is to extend the variational norm of $J$ to the dyadic tree. Thus, one might define for $x \in c_{00}(T)$:
\begin{equation}
\|x\|_1 = \max\left\{ \| x\|_\infty, \quad \sup \left\{ \left( \sum_{i=1}^n |x(I_{i, \max}) - x(I_{i, \min})|^2 \right )^{1/2} \right\} \right\}
\end{equation}
where the supremum is taken over families $\{I_i\}_{i=1}^n$ of pairwise disjoint segments of $T$.

This norm fails to satisfy our second requirement, as it blows up the norm of the sums on segments of $T$. More precisely, as detailed in Section 1, for any finite segment $s$ of $T$, we have $\| \sum_{\alpha \in s} e_\alpha \|_1 \ge \sqrt{\#s}$. It is interesting that the space $X_1$  the completion of $(c_{00}, \|\cdot \|_1)$ is actually  isomorphic to the Hilbert space $\ell_2(T)$ via the natural correspondence of their bases. A variant of this norm, which imposes a topological condition to keep the sums on segments uniformly bounded, also fails because it forces the space to contain an isomorphic copy of $c_0$, violating the $\ell_2$-saturation requirement.

Let us now describe the norm defined in this paper. First, we identify the dyadic tree $T$ with $(\mathbb{N}, \preceq)$, where $\preceq$ is a partial order on $\mathbb{N}$ compatible with the usual order (i.e., $n \preceq m \implies n \le m$), structuring $(\mathbb{N}, \preceq)$ as a dyadic tree. We define the norm on $c_{00}(\mathbb{N})$ by evaluating vectors against specific dual functionals. To this end, we introduce two types of averages in $c_{00}(\mathbb{N})^*$:

\textbf{Comparable averages:}
\begin{equation}
\alpha_c^* = \frac{\sum_{l=1}^k (e_{n_l}^* - e_{m_l}^*)}{2k}
\end{equation}
where $n_1 \prec m_1 \prec n_2 \prec m_2 \prec \dots \prec n_k \prec m_k$.

\textbf{Incomparable averages:}
\begin{equation}
\alpha_{inc}^* = \frac{\sum_{l=1}^k e_{n_l}^*}{k}
\end{equation}
where $n_1, n_2, \dots, n_k$ are pairwise incomparable in the partial order of $\mathbb{N}$.

A sequence $\alpha_1^*, \dots, \alpha_k^*$ forms a \textbf{successive very fast growing (vfg) family of averages} if each $\alpha_i^*$ is either a comparable average $\alpha_c^*$ or an incomparable average $\alpha_{inc}^*$, they are strictly successive blocks in $\mathbb{N}$, and their sizes increase very fast.

The desired norm is then defined as follows. For $x \in c_{00}(\mathbb{N})$, we set:
\begin{equation}
\|x\|_\alpha = \max \left\{ \| x \|_\infty ,\, \sup \left\{ \left( \sum_{l=1}^k \|x\|_{\alpha_{I_l, l}^*}^2 \right)^{1/2} : \{\alpha_l^*\}_{l=1}^k \text{ vfg and } \{I_l\}_{l=1}^k \text{ intervals of } \mathbb{N} \right\} \right\}
\end{equation}
Here, for an average $\alpha^*$ and an interval $I$ of $\mathbb{N}$, $\| x \|_{\alpha_I^*} = |\alpha^*|_I (x)|$. The space $X_\alpha$ is the completion of $(c_{00}(\mathbb{N}), \| \cdot \|_\alpha)$.

Norms based on very fast growing families of averages appeared, in the saturated norm setting, in two papers of E.Odell and Th. Schlumprecht  \cite{OS95, OS00}. Subsequently were used in \cite{AM1} to provide the first (reflexive) space with the Hereditary Invariant Subspace Property and in \cite{ AM, AM18} in defining a $\mathcal{L}_\infty $ with the Scalar Plus Compact Property and not containing $c_{0}, \ell_1 $ or  a reflexive subspace. In the present paper we use these families in the definition of a classical norm.The use of averages is the key tool to avoid the branching leak that appeared in the first attempt to define a shrinking tree basis. 

 A crucial distinction between this norm and the classical James Tree norm is that our definition fundamentally relies on \emph{successive} very fast growing families of averages. In contrast, the James norm utilizes arbitrary families of pairwise disjoint segments, which is a key ingredient in establishing the properties of $JT$.

Next, we state some of the main properties of the space $X_\alpha$:
\begin{enumerate}
\item The sums of the elements of the basis $(e_n)_n$ on the finite segments of $(\mathbb{N}, \preceq)$ are uniformly bounded.
\item Every normalized block sequence in $X_\alpha$ admits an upper $\ell_2$ estimate with constant $4$.
\item Every normalized block sequence is weakly null.
\item The basis $(e_n)_n$ is shrinking.
\item Every closed infinite-dimensional subspace $Y$ of $X_\alpha$ contains a further subspace isomorphic to $\ell_2$ and complemented in $X_\alpha$.
\item The basis $(e_n)_n$ of $X_\alpha$ is a $c_0$ spreading model.
\item Every normalized block sequence in $X_\alpha$ either has a subsequence equivalent to the $\ell_2$ basis (spanning a space complemented in $X_\alpha$) or has a subsequence which is a $c_0$ spreading model.
\end{enumerate}
Properties $(1)$ to $(5)$ remain also valid in $JT_*$ while $(6)$ and $(7)$ are consequence of the use of averages in the definition of the norm and do not hold for $JT_*$.

Concerning the dual space $X_\alpha^*$, which serves as an analog to $JT$, we prove the following:
\begin{enumerate}
\item Every normalized block sequence in $X_\alpha^*$ admits a lower $\ell_2$ estimate.
\item The basis $(e_n^*)_n$ is an $\ell_1$ spreading model.
\item The space $X_\alpha^*$ does not contain an isomorphic copy of $\ell_1$.
\end{enumerate}

Our knowledge regarding the structure of $JT$ is much richer. In particular, results in \cite{AAK:08, AGM} and earlier works provide a complete description of the isomorphic structure of $JT$. Some of these properties include:
\begin{enumerate}
\item Every normalized block sequence in $JT$ admits a lower $\ell_2$ estimate.
\item Every normalized basic sequence in $JT$ has a subsequence equivalent either to the $\ell_2$ basis or to the basis of $J$ and the generated space is complemented in $JT$
\item Every subspace $Y$ of $JT$ with a non-separable dual contains an isomorphic copy of $JT$ which is complemented in $JT$.
\end{enumerate}

Unlike $X_\alpha^*$, the norm in $JT$ is explicitly defined, providing powerful tools for a better understanding of the space's structure. In contrast, proving that $\ell_1$ does not embed into $X_\alpha^*$ requires delicate, non-trivial arguments.

Finally, we study the bidual $X_\alpha^{**}$ and show the following:
\begin{enumerate}
\item The quotient space $X_\alpha^{**}/X_\alpha$ is isometric to $c_0(2^{\mathbb{N}})$. (In the case of $JT$, the corresponding quotient $JT^*/JT_*$ is isometric to $\ell_2(2^{\mathbb{N}})$).
\item The canonical quotient map $Q : X_\alpha^{**} \to X_\alpha^{**}/ X_\alpha$ is a strictly singular operator.
\item The space $X_\alpha^{**}$ is complementably $\ell_2$-saturated. 
\end{enumerate}
In Section 4 we present the line variant of the norm which defines a quasireflexive space $X_c$ with a shrinking basis and $ \text{dim} (X_c^{**} / X_c )= 1$. Namely the space $X_c$ is similar to the classical James space  $J$ (\cite{J1}).

As it is natural to expect proving, even analogue, results in the triples $(JT_*, JT, JT^*)$ and 
$(X_\alpha, X_\alpha^*, X_\alpha^{**})$ we follow different techniques, as the two triples have different starting space.

We use the standard Banach spaces notation as it appears in \cite{AK06, LT77}.

\begin{center}
\textbf{Declaration of Generative AI and AI-assisted technologies}
\end{center}

Throughout this research, the authors collaborated closely with the AI models Google Gemini and OpenAI ChatGPT. Both tools were used to format and typeset the \LaTeX\ manuscript under the authors' systematic supervision. Beyond typesetting, Google Gemini suggested the two norms discussed in Section 1 and formulated the statement and proof showing that the space $X_1$ is isomorphic to a Hilbert space (Proposition \ref{P5}). It also assisted in generating the statements and proofs in Section 4. After using these tools, the authors reviewed and edited the content as needed and take full responsibility for the final content of the publication.

\section{Discussion on Unsuccessful Attempts to Define the Norm}
In this section, we discuss two natural approaches to defining a norm on the dyadic tree and analyze the specific reasons they fail to yield the desired properties. The first attempt fails due to uncontrolled sums over segments, while the second inadvertently embeds $c_0$. Some of the standard formal terminology related to the dyadic tree used here is introduced later, in Section 2.

We wish to construct a norm $\|\cdot\|$ on the vector space $c_{00}(T)$ of finitely supported functions on the dyadic tree $T$ such that:
\begin{enumerate}
    \item[(1)] The standard unit vector basis $(e_{\alpha})_{\alpha \in T}$ is shrinking.
    \item[(2)] The sums of the basis elements over any segment are uniformly bounded; i.e., there exists $C > 0$ such that for every segment $s \subset T$, we have $\|\sum_{\alpha \in s} e_{\alpha}\| \le C$.
    \item[(3)] The completion is $\ell_2$-saturated.
    \item[(4)] The dual of the space does not contain $\ell_1$.
\end{enumerate}
Below, we present two natural but unsuccessful attempts to define such a norm, analyzing precisely why each fails.

\subsection{The First Attempt: Failure due to the Branching Property}
A natural starting point is to adapt the classical shrinking James norm $\|x\|_{J_s}$ by evaluating quadratic variation sums over disjoint segments of $T$. For any $x \in c_{00}(T)$, define:
\begin{equation}
\|x\|_1 = \max \left\{ \| x\|_\infty, \quad \sup \left\{ \left( \sum_{i=1}^n |x(I_{i, \max}) - x(I_{i, \min})|^2  \right)^{1/2} \right\} \right\} 
\end{equation}
where the supremum is taken over families $\{I_i\}_{i=1}^n$ of pairwise disjoint segments of $T$. 
\begin{proof}[Why this attempt fails]
The branching structure of the tree allows the variation term to ``leak'' norm, violating property (2). Let $s$ be an arbitrary segment of $T$. Consider the vector $x = \sum_{\alpha \in s} e_{\alpha}$. For every node $\alpha \in s$, let $I_{\alpha}$ be a segment of length greater than $1$ such that $I_{\alpha, \min} = \alpha$ and the rest of $I_{\alpha}$ branches off $s$ immediately (which is always possible since every node has two immediate successors). 

The family of segments $\{I_{\alpha}\}_{\alpha \in s}$ is pairwise disjoint. Evaluating the variation of $x$ on each $I_{\alpha}$, we obtain:
\begin{equation}
|x(I_{\alpha, \max}) - x(I_{\alpha, \min})| = |0 - 1| = 1
\end{equation}
By selecting these segments in our supremum, we get:
\begin{equation}
\|x\|_1 \ge \left( \sum_{\alpha \in s} |x(I_{\alpha, \max}) - x(I_{\alpha, \min})|^2 \right)^{1/2} = \sqrt{\# s}
\end{equation}
As the length of the segment $s$ increases, the norm $\|\sum_{\alpha \in s} e_{\alpha}\|_1$ grows arbitrarily large, meaning the sums of the basis elements over segments are not uniformly bounded.
\end{proof}
We set $X_1 $ be the completion of $(c_{00}, \| \cdot\|_1)$.
The following result was contributed by the AI when we asked it to prove that $X_1$ is reflexive, a fact we had previously verified. It demonstrates that the norm defined above is far from our intended goal.
\begin{proposition}\label{P5}
 The space  $X_1$  is isomorphic to the Hilbert space $\ell_2(T)$. 
\end{proposition}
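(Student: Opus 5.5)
The plan is to show directly that $\|\cdot\|_1$ is equivalent to the $\ell_2(T)$-norm on $c_{00}(T)$ via the identity map on the unit vectors. More precisely, I aim for
\[
\|x\|_{\ell_2}\le\|x\|_1\le\sqrt2\,\|x\|_{\ell_2}\qquad (x\in c_{00}(T)).
\]
Since $c_{00}(T)$ is dense in both completions, the natural correspondence $e_\alpha\mapsto e_\alpha$ then extends to an isomorphism of $X_1$ onto $\ell_2(T)$.

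\emph{Upper estimate.} Clearly $\|x\|_\infty\le\|x\|_{\ell_2}$. For a family $\{I_i\}_{i=1}^n$ of pairwise disjoint segments, I use $|a-b|^2\le 2(|a|^2+|b|^2)$ to get
\[
\sum_i|x(I_{i,\max})-x(I_{i,\min})|^2\le 2\sum_i\big(|x(I_{i,\max})|^2+|x(I_{i,\min})|^2\big).
\]
If a segment is a single node, its term is $0$ and can be discarded. Otherwise, all the endpoints $I_{i,\max},I_{i,\min}$ are distinct nodes, because the segments are pairwise disjoint. Hence the right-hand side is at most $2\|x\|_{\ell_2}^2$, which gives $\|x\|_1\le\sqrt2\|x\|_{\ell_2}$.

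\emph{Lower estimate.} This is the step that needs care, namely a disjointness construction generalizing the one in the discussion above. Let $F=\supp x$ and choose a level $N$ deeper than every node of $F$. For each $\alpha\in F$, let $I_\alpha$ be the segment that starts at $\alpha$, passes to the right immediate successor of $\alpha$, and then always passes to the left immediate successor until it reaches level $N$. Its maximum $\gamma_\alpha$ lies outside $F$, so $x(\gamma_\alpha)=0$.

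I claim these segments are pairwise disjoint. If $\alpha,\alpha'$ are incomparable, the segments lie in disjoint subtrees. If $\alpha\prec\alpha'$ and $\alpha'\notin I_\alpha$, then $I_{\alpha'}$ lies in the subtree of $\alpha'$, which misses $I_\alpha$. If $\alpha\prec\alpha'$ and $\alpha'\in I_\alpha$, then after $\alpha'$ the segment $I_\alpha$ continues to the left successor of $\alpha'$, whereas $I_{\alpha'}$ continues to the right one. This also covers the case where $\alpha'$ is the right successor of $\alpha$. So the two segments diverge immediately, and $I_\alpha\cap I_{\alpha'}=\emptyset$ since $\alpha\notin I_{\alpha'}$.

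Plugging this family into the supremum gives
\[
\|x\|_1^2\ge\sum_{\alpha\in F}|0-x(\alpha)|^2=\|x\|_{\ell_2}^2.
\]
Together with the upper estimate, this completes the argument.

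The only point where I expect an obstacle is this disjointness: naive choices, such as always going left, let paths starting at comparable nodes merge. The "right once, then left forever" rule is designed to prevent exactly that.
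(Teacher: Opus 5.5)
Your upper estimate is correct, and its constant $\sqrt2$ is better than the paper's $\sqrt3$. The lower estimate, however, fails in your third case. If $\alpha\prec\alpha'$ and $\alpha'\in I_\alpha$, then $\alpha'$ lies in both $I_\alpha$ and $I_{\alpha'}$, since it is the minimum of the latter. So the two segments are \emph{not} disjoint: they part ways only after sharing $\alpha'$, and the fact that $\alpha\notin I_{\alpha'}$ does not help.

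No cleverer routing rule can fix this, because the inequality $\|x\|_{\ell_2}\le\|x\|_1$ is false. Let $r$ be the root and $x=e_r+e_{r_L}+e_{r_R}$, so $F=\{r,r_L,r_R\}$. A segment contributes to the sum only if exactly one of its endpoints lies in $F$, and then it contributes $1$. Three disjoint contributing segments would each contain exactly one node of $F$. The one containing $r$ has $r$ as its minimum and a maximum outside $F$, hence of height at least $2$. It therefore passes through $r_L$ or $r_R$, a contradiction. Hence $\|x\|_1=\sqrt2<\sqrt3=\|x\|_{\ell_2}$, and some constant must be lost.

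Your construction can be rescued at the cost of a factor $\sqrt2$. Let $\rho(\alpha)$ be the number of moves to a right successor on the path from the root to $\alpha$. If $\alpha'\in I_\alpha\setminus\{\alpha\}$, then $\alpha'$ is reached from $\alpha$ by one right move followed only by left moves, so $\rho(\alpha')=\rho(\alpha)+1$. For $j=0,1$, set $F_j=\{\alpha\in F:\rho(\alpha)\equiv j \pmod 2\}$. Within each $F_j$ only your first two cases can occur, so the segments $\{I_\alpha\}_{\alpha\in F_j}$ are pairwise disjoint and $\|x\|_1^2\ge\sum_{\alpha\in F_j}x(\alpha)^2$. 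Adding the two inequalities gives $\|x\|_1\ge\|x\|_{\ell_2}/\sqrt2$.

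The paper proceeds differently. It splits the length-one segments $[u,u_L]$, $[u,u_R]$ into four families of pairwise disjoint segments, according to the parity of $|u|$ and the direction. It then expands $\sum_u\big(|x(u)-x(u_L)|^2+|x(u)-x(u_R)|^2\big)\le 4\|x\|_1^2$ and controls the cross term $\sum_u x(u)\big(x(u_L)+x(u_R)\big)$ by Cauchy--Schwarz, which yields the smaller constant $\sqrt{(3-2\sqrt2)/5}$. The repaired version of your argument is more elementary and gives a better constant. As written, though, the proposal rests on a false disjointness claim and a false inequality.
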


\begin{proof}
We will show that there exist constants $c, C > 0$ such that 
$$c\|x\|_{\ell_2} \le \|x\|_1 \le C\|x\|_{\ell_2}$$ 
for all $x \in c_{00}(T)$.

\textbf{Upper bound:} Let $\{I_i\}_{i=1}^n$ be a family of pairwise disjoint segments in $T$, and let $\{\beta_j\}_{j=1}^m$ be a finite set of incomparable nodes disjoint from $\bigcup_{i=1}^n I_i$. Let $E = \bigcup_{i=1}^n \{ \min I_i, \max I_i \}$ be the set of endpoints of the segments. Since the segments are pairwise disjoint, the elements of $E$ are mutually distinct.

Using the elementary inequality $(a-b)^2 \le 2a^2 + 2b^2$, we obtain:
\begin{equation*}
\sum_{i=1}^n |x(\max I_i) - x(\min I_i)|^2 \le 2 \sum_{i=1}^n \left( |x(\max I_i)|^2 + |x(\min I_i)|^2 \right) = 2 \sum_{\alpha \in E} |x(\alpha)|^2.
\end{equation*}
Since the nodes $\{\beta_j\}_{j=1}^m$ are disjoint from $E$, we have:
\begin{equation*}
\sum_{i=1}^n |x(\max I_i) - x(\min I_i)|^2 + \sum_{j=1}^m |x(\beta_j)|^2 \le 2 \sum_{\alpha \in E} |x(\alpha)|^2 + \sum_{j=1}^m |x(\beta_j)|^2 \le 3 \|x\|_{\ell_2}^2.
\end{equation*}
Taking the supremum over all such families yields $\|x\|_1 \le \sqrt{3} \|x\|_{\ell_2}$.

\textbf{Lower bound:} We partition the edges of the tree into four disjoint families to construct valid test segments for the $\|\cdot\|_1$ norm. For any node $u \in T$, let $u_L$ and $u_R$ denote its immediate successors (left and right children). The four families of segments of length 1 are defined by:
\begin{enumerate}
    \item $F_1$: $\{ [u, u_L] : |u| \text{ is even} \}$
    \item $F_2$: $\{ [u, u_R] : |u| \text{ is even} \}$
    \item $F_3$: $\{ [u, u_L] : |u| \text{ is odd} \}$
    \item $F_4$: $\{ [u, u_R] : |u| \text{ is odd} \}$
\end{enumerate}
Within each family $F_k$, the segments are pairwise disjoint. Therefore, for each $k \in \{1, 2, 3, 4\}$, the sum of the squared variations over $F_k$ is bounded by $\|x\|_1^2$. Summing these four bounds gives:
\begin{equation*}
\sum_{u \in T} |x(u) - x(u_L)|^2 + \sum_{u \in T} |x(u) - x(u_R)|^2 \le 4 \|x\|_1^2.
\end{equation*}
We expand the left-hand side:
\begin{equation*}
V = \sum_{u \in T} \left( 2x(u)^2 + x(u_L)^2 + x(u_R)^2 - 2x(u)(x(u_L) + x(u_R)) \right).
\end{equation*}
Notice that $\sum_{u \in T} 2x(u)^2 = 2\|x\|_{\ell_2}^2$, and $\sum_{u \in T} (x(u_L)^2 + x(u_R)^2) = \|x\|_{\ell_2}^2 - x(\text{root})^2$. 
By the Cauchy-Schwarz inequality, the cross-term is bounded by:
\begin{equation*}
\left| \sum_{u \in T} 2x(u)(x(u_L) + x(u_R)) \right| \le 2 \left( \sum_{u \in T} x(u)^2 \right)^{1/2} \left( \sum_{u \in T} (x(u_L) + x(u_R))^2 \right)^{1/2}.
\end{equation*}
Using $(a+b)^2 \le 2(a^2+b^2)$, we have $\sum_{u \in T} (x(u_L) + x(u_R))^2 \le 2 \sum_{u \in T} (x(u_L)^2 + x(u_R)^2) \le 2\|x\|_{\ell_2}^2$.
Thus, the cross-term is bounded by $2 \|x\|_{\ell_2} \sqrt{2} \|x\|_{\ell_2} = 2\sqrt{2} \|x\|_{\ell_2}^2$.

Substituting this back into the expansion for $V$, we obtain:
\begin{equation*}
4 \|x\|_1^2 \ge V \ge 2\|x\|_{\ell_2}^2 + \|x\|_{\ell_2}^2 - x(\text{root})^2 - 2\sqrt{2}\|x\|_{\ell_2}^2 = (3 - 2\sqrt{2})\|x\|_{\ell_2}^2 - x(\text{root})^2.
\end{equation*}
Since the singleton set $\{\text{root}\}$ forms a valid choice for $\|x\|_\infty$ we also trivially have $\|x\|_1^2 \ge x(\text{root})^2$. Adding these two inequalities yields:
\begin{equation*}
5 \|x\|_1^2 \ge (3 - 2\sqrt{2})\|x\|_{\ell_2}^2.
\end{equation*}
Because $3 - 2\sqrt{2} > 0$, we deduce that $\|x\|_1 \ge c \|x\|_{\ell_2}$ for the constant $c = \sqrt{\frac{3 - 2\sqrt{2}}{5}}$.

Therefore, the spaces $(c_{00}(T), \|\cdot\|_1)$ and $(c_{00}(T), \|\cdot\|_{\ell_2})$ are linearly isomorphic. As a result, the completion $X_1$ is isomorphic to the Hilbert space $\ell_2(T)$.
\end{proof}
\subsection{The Second Attempt: Failure due to the $c_0$-embedding}
To resolve the branching leak of the first attempt, one can impose a topological constraint on where segments are allowed to start. Specifically, we can require that the starting elements of the variation segments must form an antichain (pairwise incomparable elements), preventing them from branching out simultaneously along the same segment.

Define the norm $\|\cdot\|_2$ on $c_{00}(T)$ as:
\begin{equation}
\|x\|_2 = \sup \left\{ \left( \sum_{j=1}^m \text{Var}(x|_{B_j})^2 \right)^{1/2} \right\}
\end{equation}
where $\{B_1, \dots, B_m\}$ are pairwise disjoint segments lying within individual branches, and:
\begin{equation}
\text{Var}(x|_{B_j}) = \sup_{p_1 \prec \dots \prec p_k \in B_j} \left( \sum_{r=1}^{k-1} |x(p_r) - x(p_{r+1})|^2 + |x(p_k)|^2 \right)^{1/2}
\end{equation}
with the crucial boundary constraint that the initial nodes $\{s_j = \min B_j\}_{j=1}^m$ form an \emph{antichain} in $T$.

\begin{proof}[Why this attempt fails]
While this boundary antichain constraint successfully bounds the segment sums (recovering property 2), the dyadic splitting of the tree under this formulation forces the completed space to contain an isomorphic copy of $c_0$, violating property (3) of $\ell_2$-saturation.

Let $D_n = \{\alpha \in T : |\alpha| = n\}$ be the set of all nodes of height $n$, which forms an antichain of size $2^n$. We define a normalized sequence of blocks $(z_n)_{n=1}^\infty$ by setting:
\begin{equation}
z_n = \frac{1}{2^{n/2}} \sum_{\alpha \in D_n} e_{\alpha}
\end{equation}
Choosing the branch segments to be the singletons $B_{\alpha} = \{\alpha\}$ for each $\alpha \in D_n$ (whose starting points are trivially an antichain), we obtain:
\begin{equation}
\|z_n\|_2^2 \ge \sum_{\alpha \in D_n} |z_n(\alpha)|^2 = 2^n \cdot \left(\frac{1}{2^{n/2}}\right)^2 = 1
\end{equation}
Thus, $\|z_n\|_2 \ge 1$ for all $n$.

Now let $x = \sum_{n=1}^N c_n z_n$. Let $\{B_1, \dots, B_m\}$ be any family of pairwise disjoint branch segments whose starting points $\{s_j\}_{j=1}^m$ form an antichain in $T$. Let $h_j = |s_j|$ denote the height of $s_j$. Since each $B_j$ lies on a single branch, it intersects at most one node of height $k \ge h_j$. For any node $p \in B_j$ of height $k = |p|$, the coordinate value of $x$ is $x(p) = c_k 2^{-k/2}$. Thus, the total variation along $B_j$ is bounded by:
\begin{equation}
\text{Var}(x|_{B_j}) \le\sqrt 2 \sum_{k \ge h_j} |c_k| 2^{-k/2} \le \sqrt 2 \left(\sup_{1 \le i \le N} |c_i|\right) \sum_{k \ge h_j} 2^{-k/2} \le C \left(\sup_{1 \le i \le N} |c_i|\right) 2^{-h_j/2}
\end{equation}
where $C = \frac{\sqrt 2}{1 - 1/\sqrt{2}}$. Squaring and summing these terms yields:
\begin{equation}
\sum_{j=1}^m \text{Var}(x|_{B_j})^2 \le C^2 \left(\sup_{1 \le i \le N} |c_i|^2\right) \sum_{j=1}^m 2^{-h_j}
\end{equation}
Since $\{s_j\}_{j=1}^m$ is an antichain in the dyadic tree, they satisfy the classical \emph{Kraft inequality} [\cite{CT06},
\cite{Kraft49}]
\begin{equation}
\sum_{j=1}^m 2^{-h_j} \le 1
\end{equation}
Applying this to the summation yields the upper bound:
\begin{equation}
\|x\|_2 \le C \sup_{1 \le i \le N} |c_i|
\end{equation}
Since $\|x\|_2 \ge \sup_{1 \le i \le N} |c_i| \cdot \|z_i\|_2 \ge \sup_{1 \le i \le N} |c_i|$, the sequence $(z_n)_{n=1}^\infty$ is equivalent to the unit vector basis of $c_0$, meaning the space is not $\ell_2$-saturated.
\end{proof}

\section{The Shrinking norm}
This section introduces the foundational concepts for our construction. We formally define the dyadic tree, introduce comparable and incomparable averages, and use very fast growing families of these averages to explicitly define the norming set and the space $X_\alpha$.

\subsection{The dyadic tree}
Recall that the dyadic tree $T$ is a countable tree with a unique root, and every $\alpha \in T$ has two immediate successors. Traditionally, $T$ is represented as $T=\{(n,i) : 0\leq n<\infty, 0\leq i <2^n \}$ with the usual partial order in which each $(n,i)\in T$ has two successors $(n+1,2i)$ and $(n+1,2i+1)$.
The \emph{height of} $\alpha=(n,i)\in T$ is $|\alpha|=n$.

Working in Banach spaces related to the dyadic tree, it is convenient to identify $T$ with $(\N, \preceq )$ where $\preceq$ is compatible with the natural order of $\N$ in the sense that $n \preceq m$ implies $n \leq m$. For example, the function $V : T \to \N$ defined by the rule $V((n,i)) = 2^n + i$ identifies $T$ with $(\N, \preceq)$.

A segment of $T$ is any set $s$ such that for every $n\leq m \in s$ we have $n \preceq m$, and for every $k\in \N$ with $n \preceq k \preceq m$ we have $k\in s$. We shall denote the segments by the letters $s, t$.
An initial segment is a segment having the root as its first element.
If $s$ is a segment and $I$ is an interval of $\N$, the restriction of $s$ to $I$ is the segment $s_{|I}=s\cap I$. This follows from the compatibility of $\preceq$ and $\leq$.

A branch is a maximal segment. We shall denote branches by the letters $\sigma,\tau$.
If $\sigma$ is a branch and $n\in\N$, we denote by $\sigma_{> n}$ the set $\sigma_{> n}=\{k\in \sigma: k>n\}$. For a branch $\sigma$ and $n\in\N$, we shall call the set $\sigma_{> n}$ a final segment.

Also, by $\sigma_{< n}$ we shall denote the initial segment $\sigma_{<n}=\{k\in \sigma: k<n\}$. In a similar manner, we define the initial segment $\sigma_{\leq n}$ and the final segment $\sigma_{\geq n}$.

Note that every infinite segment is contained in a unique branch. Actually, it is a final segment of a branch $\sigma$.
Two final segments $\sigma_{>n_{1}, 1}$ and $\sigma_{>n_{2}, 2}$ are said to be incomparable if their first elements are incomparable.
We denote by $\Gamma$ the set of all branches in $T$. Note that the cardinality of $\Gamma$ is $\mathfrak{c}$.

Let us note that:
\begin{enumerate}
    \item[a)] if $\sigma_{1},\dots,\sigma_{l}$ are distinct branches, then there exists $k\in\N$ such that the final segments $\sigma_{ > k, 1},\dots, \sigma_{>k, l}$ are incomparable.
    \item[b)] if $(\s_{n})_{n\in\N}$ is a sequence of initial segments, there exist a subsequence $(\s_{n})_{n\in L}$ and an initial segment or branch $\sigma$ such that $\s_{n}\xrightarrow[n\in L]{} \s$ in the pointwise topology.
\end{enumerate}

\subsection{Two Types of Averages}
To formulate a norm overcoming the limitations of previous attempts, we identify $T$ with $\mathbb{N}$ and define two distinct collections of functional averages in the dual space:

\begin{enumerate}
    \item[(i)] \textbf{Comparable Averages:} 
    Let $J = \{n_1, m_1, n_2, m_2, \dots, n_k, m_k\}$ be a finite sequence of comparable nodes in $T$ such that $n_1 \prec m_1 \prec n_2 \prec m_2 \prec \dots \prec n_k \prec m_k$. We associate to $J$ the functional:
    \begin{equation}
    \alpha_c^* = \frac{\sum_{l=1}^k (e_{n_l}^* - e_{m_l}^*)}{2k}
    \end{equation}
    We denote by $\mathcal{A}_c$ the collection of all such comparable averages.
    
    \item[(ii)] \textbf{Incomparable Averages:} 
    Let $I = \{n_1, \dots, n_k\}$ be a finite family of pairwise incomparable nodes in $T$. We associate to $I$ the functional:
    \begin{equation}
    \alpha_{inc}^* = \frac{\sum_{l=1}^k e_{n_l}^*}{k}
    \end{equation}
    We denote by $\mathcal{A}_{inc}$ the collection of all such incomparable averages.
\end{enumerate}

Let $\mathcal{A} = \mathcal{A}_c \cup \mathcal{A}_{inc}$ denote the union of these families. For any average $\alpha^* \in \mathcal{A}$ defined by a set $S \subset T$ (where $S$ is either $J$ or $I$), we set the \emph{size of the average} to be $s(\alpha^*) = \#S$. For $\alpha^* = \alpha_c^*$, $s(\alpha^*) = 2k$, and for $\alpha^* = \alpha_{inc}^*$, $s(\alpha^*) = k$.

\subsection{Very Fast Growing Families}

\begin{definition}[Very Fast Growing Families]
A finite family of successive averages $\{\alpha_1^*, \dots, \alpha_k^*\}$ is called \textbf{very fast growing (vfg)} if 
    for every $l > 1$, setting $h_{l-1} = \max \supp(\alpha_{l-1}^*)$, we have:
   
    \begin{equation}
    s(\alpha_l^*) > 2^{h_{l-1}}
    \end{equation}

\end{definition}

\begin{remark}\label{R2}
\begin{enumerate}
\item Notice that for every average $\alpha^* \in \mathcal{A}$ we always have $s(\alpha^*) \leq \max \supp(\alpha^*)$. Therefore, for every $\{\alpha_j^*\}_{j=1}^k$ vfg family, setting $\max \supp(\alpha_j^*) = h_j$, for $j > 1$ we have:
\[ h_j \geq s(\alpha_j^*) > 2^{h_{j-1}} \geq 2^{s(\alpha_{j-1}^*)} \]
In particular, $\{h_j\}_j$ and $\{s(\alpha_j^*)\}_j$ are strictly increasing and for $j > 1$:
\[ s(\alpha_j^*) > 2^{s(\alpha_{j-1}^*)} \]
\item If $\{\alpha_i^*\}_{i \in F}$ is a vfg family and $H \subset F$, then $\{\alpha_i^*\}_{i \in H}$ is also a vfg family.
\end{enumerate}
\end{remark}

\subsection{Definition of the Norm}
We define the norming set $G \subset c_{00}(T)^*$ to be the smallest subset of $c_{00}(T)^*$ satisfying the following properties:
\begin{enumerate}
    \item[(a)] $G$ contains all singletons $\{\pm e_n^*\}_{n \in T}$ and it is symmetric (i.e. if $x^* \in G$, then $-x^* \in G$).
    \item[(b)] $G$ is closed under projections on intervals of $\mathbb{N}$. Specifically, if $x^* \in G$ and $I$ is an interval of $\mathbb{N}$, then $P_I^*(x^*) = x^*|_I \in G$. 
    \item[(c)] For any very fast growing family of averages $\{\beta_i^*\}_{i \in F} \subset \mathcal{A}$, every family of intervals $\{I_i\}_{i \in F}$ of $\mathbb{N}$, and every sequence of scalars $\{\lambda_i\}_{i \in F}$ with $\sum_{i \in F} \lambda_i^2 \le 1$, setting $\alpha_i^* = \beta_i^*|_{I_i}$, the functional
    \begin{equation}
    x^* = \sum_{i \in F} \lambda_i \alpha_i^*
    \end{equation}
    belongs to $G$.
\end{enumerate}

\begin{definition}\label{D1}
For $x^* = \sum_{i \in F} \lambda_i \alpha_i^* \in G$ with $\sum_{i \in F} \lambda_i^2 \le 1$ and $\alpha_i^* = \beta_i^*|_{I_i}$, we call the vfg family $\{\beta_i^*\}_{i \in F}$ the \textbf{generators} of $x^*$.
\end{definition}

The norm on the space $c_{00}(T)$ is defined by:
\begin{equation}
\|x\|_\alpha = \sup \{ x^*(x) : x^* \in G \}
\end{equation}

An alternative form of the definition is the following:
\begin{equation}
\|x\|_\alpha = \max \left\{ \| x \|_\infty ,\, \sup \left\{ \left( \sum_{l=1}^k \|x\|_{\alpha_{I_l, l}^*}^2 \right)^{1/2} : \{\alpha_l^*\}_{l=1}^k \text{ vfg and } \{I_l\}_{l=1}^k \text{ intervals of } \mathbb{N} \right\} \right\}
\end{equation}
Here for $\alpha^* \in \mathcal{A}$ and $I$ an interval of $\mathbb{N}$, $\| x \|_{\alpha_I^*} = |\alpha^*|_I (x)|$.

It is easy to check that the two definitions determine the same norm.

The space $X_\alpha$ is the completion of $(c_{00}(\mathbb{N}), \| \cdot \|_\alpha)$.

\begin{notation}
For a branch $\sigma \in \Gamma$ and $x \in X_\alpha$, we set
\begin{equation}
P_\sigma(x) = x|_\sigma = \sum_{k \in \sigma} x(k)e_k
\end{equation}
\end{notation}

\begin{lemma}\label{lem_branch_projection}
For every branch $\sigma \in \Gamma$, the projection $P_\sigma : X_\alpha \to \overline{\operatorname{span}}\{e_k : k \in \sigma\}$ is a bounded operator with $\|P_\sigma\| = 1$.
\end{lemma}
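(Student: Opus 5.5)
The plan is to show that for every $x\in c_{00}(\mathbb{N})$ we have $\|x|_\sigma\|_\alpha\le \|x\|_\alpha$. Since $(x^*|_\sigma)(x)=x^*(x|_\sigma)$ for every $x^*\in G$, it suffices to prove the following claim: \emph{the norming set $G$ is closed under restriction to $\sigma$}, i.e. $x^*\in G$ implies $x^*|_\sigma\in G$. Granting the claim, $P_\sigma$ extends by density to a bounded projection on $X_\alpha$ with $\|P_\sigma\|\le 1$. Since $P_\sigma e_k=e_k$ for $k\in\sigma$ and $P_\sigma\neq 0$, we get $\|P_\sigma\|=1$.

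First I will describe $G$ explicitly. Every element of $G$ is either $\pm e_n^*$ (or its restriction to an interval, which is $\pm e_n^*$ or $0$), or of the form $\sum_{i\in F}\lambda_i\beta_i^*|_{I_i}$ as in (c). Indeed, the set of such functionals contains the singletons and is symmetric. It is also closed under interval projections, because
\[
\Bigl(\sum_{i\in F}\lambda_i\beta_i^*|_{I_i}\Bigr)\Big|_{I}=\sum_{i\in F}\lambda_i\beta_i^*|_{I_i\cap I},
\]
and $I_i\cap I$ is again an interval (possibly empty, giving $0$). Hence, by minimality, $G$ coincides with this set, and it is enough to treat the two kinds of functionals. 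The singletons are trivial: $e_n^*|_\sigma$ equals $e_n^*$ or $0$.

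For a functional $x^*=\sum_{i\in F}\lambda_i\beta_i^*|_{I_i}$ I will show that, for each $i$, there is an interval $I_i'$ with $\beta_i^*|_{I_i}|_\sigma=\beta_i^*|_{I_i'}$. Then $x^*|_\sigma=\sum_{i\in F}\lambda_i\beta_i^*|_{I_i'}$ has the same vfg generators and the same coefficients, so it lies in $G$ by (c).

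\begin{itemize}
\item If $\beta_i^*$ is a comparable average supported on the chain $J=\{n_1\prec m_1\prec\dots\prec n_k\prec m_k\}$, then $J\cap\sigma$ is downward closed in $J$, because $\sigma$ is a branch: if $m\in\sigma$ and $n\prec m$, then $n\in\sigma$. Since $\preceq$ is compatible with $\le$, this initial portion of $J$ equals $J\cap[1,N]$ for some $N$. So we may take $I_i'=I_i\cap[1,N]$. It does not matter if the cut falls between some $n_l$ and $m_l$, since restrictions to intervals are allowed to do that.
\item If $\beta_i^*$ is an incomparable average, then $\sigma$ meets its support in at most one node $n$, because two nodes of a branch are comparable. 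So the restriction is either $0$ or $\beta_i^*|_{\{n\}}$ with $n\in I_i$. The singleton $\{n\}$ is an interval of $\mathbb{N}$, so we may take $I_i'=I_i\cap\{n\}$.
\end{itemize}

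The only real point is the comparable case: we need that intersecting a chain with a branch amounts to cutting by an interval of $\mathbb{N}$. This is exactly where the compatibility of $\preceq$ with the natural order is used. Everything else is bookkeeping.
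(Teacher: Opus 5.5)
Your proposal is correct and follows essentially the paper's argument: the paper likewise modifies a norming functional for $P_\sigma x$ so that its support lies in $\sigma$, stopping each comparable average at its last node on $\sigma$ and keeping only the single node of each incomparable average on $\sigma$, and this modified functional is exactly your $x^*|_\sigma=\sum_{i\in F}\lambda_i\beta_i^*|_{I_i'}$. You also spell out three points the paper leaves implicit: the explicit description of $G$, the fact that branches are downward closed, and the use of the compatibility of $\preceq$ with $\le$ to see that this cut is an interval restriction.
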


\begin{proof}
Let $x \in X_\alpha$, and let $x^* \in G$ such that $x^*(P_\sigma(x)) > \|P_\sigma(x)\| - \epsilon$. Then $x^* = \sum_{i \in F} \lambda_i \alpha_i^*$ with $\sum_{i \in F} \lambda_i^2 \le 1$, and we may assume the following:

For every $i \in F$, $\supp(\alpha_i^*) \cap \sigma \neq \emptyset$ and $\supp(\alpha_i^*) \subset \sigma$.

The first condition is obvious, while for the second, if the generator $\beta_i^*$ of $\alpha_i^*$ is a comparable average, then it meets $\sigma$ up to a point and we may stop it at that point. If it is an incomparable average, then the intersection with $\sigma$ is a singleton and we may keep only this point.

Therefore, $x^*(x) = x^*(P_\sigma(x)) > \|P_\sigma(x)\| - \epsilon$, which yields that $\|x\| \ge \|P_\sigma(x)\|$, since $\epsilon > 0$ was arbitrary. Hence, $\|P_\sigma\| \le 1$. Since $P_\sigma(e_k) = e_k$ for $k \in \sigma$, it follows that $\|P_\sigma\| = 1$.
\end{proof}

\begin{lemma}
Let $\{\sigma_j\}_{j=1}^m$ be a finite family of distinct branches. Then for every $\epsilon > 0$, there exists $n \in \mathbb{N}$ such that the projection $P_{\cup_{j=1}^m \sigma_{>n,j}}$ is a bounded operator with $\|P_{\cup_{j=1}^m \sigma_{>n,j}}\| \le 1+\epsilon$.
\end{lemma}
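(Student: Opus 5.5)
Choose $n$ with the final segments $\sigma_{>n,1},\dots,\sigma_{>n,m}$ pairwise incomparable, using property a), and then enlarge $n$ so that $2^{-n}$-type error terms are below $\epsilon$. Write $A=\cup_{j=1}^m\sigma_{>n,j}$ and $x\in c_{00}$. Fix $x^*=\sum_{i\in F}\lambda_i\alpha_i^*\in G$ nearly norming $P_A x$. As in Lemma \ref{lem_branch_projection}, the aim is to replace $x^*$ by a functional $y^*$ that agrees with $x^*$ on $\supp(x)\cap A$, has $\supp y^*\subset A$, and satisfies $\|y^*\|_{G}\le 1+\epsilon$ in the sense that $y^*/(1+\epsilon)$ is dominated by norming functionals. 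Then $x^*(P_Ax)=y^*(x)\le(1+\epsilon)\|x\|$.

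\textbf{Step 1: generators.} For each $i$, restrict the support of $\alpha_i^*$ to $A$.

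If $\beta_i^*$ is incomparable, its support meets each $\sigma_{>n,j}$ in at most one node. Removing the nodes outside $A$ gives a functional $\frac{1}{k}\sum_{l\in L}e^*_{n_l}$ whose nodes are still pairwise incomparable. It is a sub-average with the wrong normalization: it equals $\frac{\#L}{k}$ times an incomparable average of smaller size. The smaller size may break the vfg condition.

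If $\beta_i^*$ is comparable, its support lies in one segment. That segment meets $A$ inside a single $\sigma_{>n,j}$, since the $\sigma_{>n,j}$ are incomparable and a segment leaving $\sigma_{>n,j}$ never enters $A$ again. Its trace on $A$ is therefore an interval (in $\mathbb N$) of $\sigma_{>n,j}\cap\supp\beta_i^*$. This is of the form $\beta_i^*|_{I'}$ up to possibly one unpaired endpoint term $\pm\frac{1}{2k}e^*_p$.

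\textbf{Step 2: incomparable averages.} This is the expected main obstacle. I would not keep the incomparable average as an average. Since $\#L\le m$, its restriction has norm at most $\frac{m}{k}\le m\,2^{-h_{i-1}}$ for $i>1$. Its nodes lie above level $n$, so $\min\supp>n$. If such a generator is the first one, then $k\ge 1$ only, but the trace consists of at most $m$ singletons with coefficient $\le 1$. I would handle this by comparing with $\|x\|_\infty$ when the first term dominates.

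More cleanly, I would split $x^*(P_Ax)$ into the comparable part and the incomparable part. The incomparable part is bounded by $\sum_i|\lambda_i|\frac{m}{s(\beta_i^*)}\|x\|_\infty$. For $i\ge2$ this is summable and small, since $s(\beta_i^*)>2^{h_{i-1}}\ge 2^{n}$ once $h_1>n$, which we may assume after discarding generators not meeting $A$.

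The first generator must be treated separately. Its trace is a combination of at most $m$ unit vectors with total weight $\le \frac{m}{k}\le m$, and this is not small. This suggests the constant $1+\epsilon$ requires a finer argument. I would try to show the trace of the first incomparable generator is itself a restriction of an incomparable average of size $\#L$. Such a family keeps the vfg property, because the condition only constrains later sizes relative to earlier maxima, and shrinking the first size is harmless. The coefficient loss $\frac{\#L}{k}\le1$ then only decreases $\lambda_1$.

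In fact the same observation works for every $i$: replacing $\beta_i^*$ by the incomparable average on $L$ with coefficient $\lambda_i\frac{\#L}{k}$ is valid only if the new size still exceeds $2^{h_{i-1}}$. This fails for $i\ge2$. So for $i\ge2$ I keep the small-error estimate, giving total error $\le \|x\|_\infty\sum_{i\ge2} m2^{-h_{i-1}}\le m2^{-n+1}\|x\|$. This is at most $\epsilon\|x\|$ for $n$ large.

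\textbf{Step 3: comparable averages.} For these, the unpaired endpoint term likewise contributes at most $\frac{1}{2k}\le 2^{-h_{i-1}}$ for $i\ge2$, again summable and small. The restricted comparable traces are exactly $\beta_i^*|_{I_i'}$ with the same generators, so vfg is preserved. For $i=1$, an unpaired endpoint can be absorbed because $\beta_1^*$ restricted to an interval beginning or ending at that endpoint is allowed.

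\textbf{Conclusion.} The remaining functional lies in $G$ and agrees with $x^*$ on $A$ up to an error of norm $\le\epsilon\|x\|$. This gives $\|P_Ax\|\le(1+\epsilon)\|x\|$.
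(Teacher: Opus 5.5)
Your proposal is correct and follows essentially the paper's argument. You discard generators that miss $A$, so that $\max\supp\beta_1^*>n$. The incomparable generators with $i\ge 2$ then contribute a summable error of order $m2^{-n}\|x\|$. The comparable generators are restricted to their trace on $A$. An incomparable first generator is replaced by the incomparable average on its trace $L$ with coefficient $\lambda_1\#L/k$, which preserves the vfg condition.

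The only redundancy is your ``unpaired endpoint'' error for comparable generators with $i\ge 2$, and it is harmless. Since $G$ allows arbitrary interval restrictions $\beta_i^*|_J$ with no pairing constraint, the trace on $A$ is exactly some $\beta_i^*|_{J_i}$ for every $i$, as you yourself observe for $i=1$.
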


\begin{proof}
Let $\epsilon > 0$. We select $n \in \mathbb{N}$ large enough such that $\frac{2m}{2^n} < \epsilon$ and $\{\sigma_{>n, j}\}_j$ are pairwise incomparable. We denote the union of the final segments by $S_n = \cup_{j=1}^m \sigma_{>n,j}$. Let $x \in X_\alpha$ and define $y = P_{S_n}(x)$. We may assume that $y\neq 0$.

We fix a functional $x^* = \sum_{i=1}^l \lambda_i \alpha_i^* \in G$ defined by a successive vfg family of generators $\{\beta_i^*\}_{i=1}^l$, with $\alpha_i^*=\beta_i^*|_{I_i}$ and $\sum_{i=1}^l\lambda_i^2\leq 1$. Because subsequences of vfg families are vfg, we may assume, without loss of generality, that $\alpha_i^*(y)\neq 0$ for $1\leq i\leq l$. We partition $\{1,\dots,l\}$ into
\[
F_1=\{1\}\cup\{i:\beta_i^*\in\mathcal{A}_c\},
\qquad F_2=\{i\geq 2:\beta_i^*\in\mathcal{A}_{inc}\},
\]
and set $x_r^*=\sum_{i\in F_r}\lambda_i\alpha_i^*$ for $r=1,2$. Both functionals belong to $G$.

For $i\in F_2$, an antichain meets each branch in at most one node, so $\supp(\beta_i^*)\cap S_n$ contains at most $m$ elements. Since $\alpha_1^*(y)\neq 0$, we have $\max\supp(\beta_1^*)>n$. Successiveness and the vfg condition therefore give, for $i\geq 2$,
\[
s(\beta_i^*)>2^{\max\supp(\beta_{i-1}^*)}>2^{n+i-2}.
\]
Consequently,
\[
|x_2^*(y)|\leq\sum_{i\in F_2}|\lambda_i\alpha_i^*(y)|
\leq\sum_{k=0}^\infty\frac{m}{2^{n+k}}\|y\|_\infty
=\frac{2m}{2^n}\|y\|_\infty
\leq\epsilon\|x\|_\alpha.
\]

To estimate $x_1^*(y)$, let $F_c=\{i\in F_1:\beta_i^*\in\mathcal{A}_c\}$. For $i\in F_c$, the chain supporting $\beta_i^*$ meets at most one of the incomparable final segments forming $S_n$, and its intersection with that final segment is an interval of the chain. As in the proof of Lemma 2.5, we can thus restrict $I_i$ further to an interval $J_i$ such that, setting $\widehat\alpha_i^*=\beta_i^*|_{J_i}$, we have
\[
\widehat\alpha_i^*(x)=\alpha_i^*(y).
\]
If $1\in F_c$, the functional $z^*=\sum_{i\in F_c}\lambda_i\widehat\alpha_i^*$ belongs to $G$ and satisfies $z^*(x)=x_1^*(y)$.

If $1\notin F_c$, set
\[
E=\supp(\alpha_1^*)\cap S_n,\qquad q=\#E,\qquad s=s(\beta_1^*),
\qquad \widetilde\beta_1^*=\frac1q\sum_{k\in E}e_k^*.
\]
Here $q>0$, since $\alpha_1^*(y)\neq 0$, and $\widetilde\beta_1^*$ is an incomparable average. With $\lambda_1'=\lambda_1q/s$, we have
\[
\lambda_1'\widetilde\beta_1^*(x)=\lambda_1\alpha_1^*(y).
\]
Replacing $\beta_1^*$ by $\widetilde\beta_1^*$ preserves successiveness and the vfg condition, because its support is a subset of $\supp(\beta_1^*)$. Also, $|\lambda_1'|\leq|\lambda_1|$. Hence
\[
z^*=\lambda_1'\widetilde\beta_1^*+\sum_{i\in F_c}\lambda_i\widehat\alpha_i^*\in G,
\qquad z^*(x)=x_1^*(y).
\]
In either case, $x_1^*(y)\leq\|x\|_\alpha$. Combining the two estimates yields
\[
x^*(y)=x_1^*(y)+x_2^*(y)
\leq(1+\epsilon)\|x\|_\alpha.
\]
Since $x^*\in G$ is arbitrary, we conclude that $\|P_{S_n}\|\leq 1+\epsilon$.
\end{proof}

\
\section{Properties of $X_\alpha$}
This section explores the primary structural properties of the space $X_\alpha$. We establish uniform bounds for segment sums, prove upper and lower $\ell_2$ estimates, and demonstrate that $X_\alpha$ admits a shrinking basis, contains  $c_0$ spreading models , and it is $\ell_2$-saturated.

\subsection{Uniform Boundedness of Segment Sums}
Using the properties of the very fast growing families, we can establish that the sums of the basis elements on any segment are uniformly bounded.

\begin{proposition}\label{prop_segment_sums}
Let $s \subset T$ be a nonempty finite segment of the tree. Then for the vector $x = \sum_{n \in s} e_n$, we have:
\begin{equation}
1\leq\|x\|\leq 2
\end{equation}
\end{proposition}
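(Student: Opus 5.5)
The lower bound is immediate from $\|x\|_\alpha\ge\|x\|_\infty=1$, since $e_n^*\in G$ for every $n\in s$. For the upper bound, fix $x^*=\sum_{i\in F}\lambda_i\alpha_i^*\in G$ with generators $\{\beta_i^*\}_{i\in F}$ vfg, $\alpha_i^*=\beta_i^*|_{I_i}$ and $\sum\lambda_i^2\le 1$. We will show $|x^*(x)|\le 2$. By Remark \ref{R2}(2) we may discard every $i$ with $\alpha_i^*(x)=0$. The plan is to bound each $|\alpha_i^*(x)|$ by a quantity that is small except for one or two indices, and then apply the crude estimate $\sum|\lambda_i|\,|\alpha_i^*(x)|\le\max_i|\lambda_i|\cdot\sum_i|\alpha_i^*(x)|$, or Cauchy--Schwarz where that is convenient.

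\emph{Incomparable generators.} An antichain meets the segment $s$ in at most one node, so if $\beta_i^*\in\mathcal{A}_{inc}$ then $|\alpha_i^*(x)|\le 1/s(\beta_i^*)$.

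\emph{Comparable generators.} Write $\beta_i^*=\frac{1}{2k}\sum_{l}(e_{n_l}^*-e_{m_l}^*)$ with the chain $n_1\prec m_1\prec\dots\prec m_k$. The set $s\cap I_i$ is a segment, and the chain restricted to $I_i$ is an interval of the chain. Evaluating the alternating sum $\sum\pm1$ over those chain nodes lying in $s\cap I_i$, the terms cancel in consecutive pairs, leaving at most two unpaired terms: one at the start and one at the end of the portion of the chain inside $s$. Hence $|\alpha_i^*(x)|\le 2/(2k)=2/s(\beta_i^*)$. Whenever $s(\beta_i^*)\ge 2$, which always holds for comparable averages, this gives $|\alpha_i^*(x)|\le 1$. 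I expect this cancellation count to be the main point to check carefully: the intersection of the chain with $s\cap I_i$ is a contiguous run of the chain, so the $+,-$ pattern is alternating and the sum lies in $\{-1,0,1\}$. If that holds, the bound improves to $|\alpha_i^*(x)|\le 1/s(\beta_i^*)$ in both cases.

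\emph{Summation.} Listing $F=\{i_1<i_2<\dots\}$, every index satisfies $|\alpha_{i}^*(x)|\le 1/s(\beta_{i}^*)\le 1$. The vfg condition (Remark \ref{R2}) gives $s(\beta_{i_{j}}^*)>2^{s(\beta_{i_{j-1}}^*)}\ge 2^{j-1}$ for $j\ge 2$. Therefore
\[
|x^*(x)|\le|\lambda_{i_1}|+\sum_{j\ge2}|\lambda_{i_j}|\,2^{-(j-1)}\le 1+\sum_{j\ge 2}2^{-(j-1)}=2,
\]
using $|\lambda_i|\le 1$. Should the comparable bound only give $2/s$, one obtains $s(\beta_{i_1}^*)\ge 2$, so the first term is still at most $1$. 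The subsequent sizes then exceed $4,16,\dots$, and the tail still sums to at most $1$, so the bound $2$ is preserved.
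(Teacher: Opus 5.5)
Your proof is correct and follows the paper's argument. You use the same per-generator bound $|\alpha_i^*(x)|\le 1/s(\beta_i^*)$: the part of the alternating chain lying in $s\cap I_i$ is a contiguous run, so it sums to $0$ or $\pm1$, and an antichain meets $s$ in at most one node. You then sum the rapidly decaying tail to get $\sum_i|\alpha_i^*(x)|\le 2$, exactly as the paper does. The differences are cosmetic: you close with $\max_i|\lambda_i|\le 1$ instead of Cauchy--Schwarz, and you index the decay by sizes via Remark \ref{R2} rather than by $h_{i-1}\ge i-i_0$. Your hedged ``$2/s$'' fallback is unnecessary, since the cancellation claim does hold: $s$ is a segment and $\preceq$ is compatible with $\le$, so the chain nodes in $s\cap I_i$ form a contiguous run of the chain.
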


\begin{proof}
Let $x^* = \sum_{i=1}^d \lambda_i \alpha_i^* \in G$ be defined by a very fast growing family of generators $\{\beta_i^*\}_{i=1}^d$ with $\alpha_i^* = \beta_i^*|_{I_i}$ and $\sum \lambda_i^2 \le 1$. If every $\supp(\alpha_i^*)$ is disjoint from $s$, then $x^*(x)=0$. Otherwise, let $i_0$ be the first index such that $\supp(\alpha_{i_0}^*) \cap s \neq \emptyset$.

For a comparable average $\beta_i^*$ of size $2k$, the coordinates retained in $s\cap I_i$ form a consecutive portion of its alternating sum. Thus $|\alpha_i^*(x)|\leq 1/(2k)=1/s(\beta_i^*)$. For an incomparable average, its support meets $s$ in at most one node, giving the same bound. In particular,
\begin{equation}
|\alpha_{i_0}^*(x)| \le \frac{1}{s(\beta_{i_0}^*)} \le 1.
\end{equation}
For any subsequent term $i > i_0$, setting $h_{i-1}=\max\supp(\beta_{i-1}^*)$, the very fast growing property gives
\begin{equation}
|\alpha_i^*(x)| \le \frac{1}{s(\beta_i^*)} < \frac{1}{2^{h_{i-1}}}.
\end{equation}
Since the generators are successive, $h_{i-1}\geq i-i_0$. Therefore the sum over $i>i_0$ is bounded by $\sum_{r=1}^{\infty}2^{-r}=1$, and hence
\begin{equation}
\sum_{i=1}^d |\alpha_i^*(x)| \le 1+1=2.
\end{equation}
Applying the Cauchy-Schwarz inequality, we obtain:
\begin{equation}
x^*(x) \le \left( \sum_{i=1}^d \lambda_i^2 \right)^{1/2} \left( \sum_{i=1}^d |\alpha_i^*(x)|^2 \right)^{1/2} \le 2.
\end{equation}
Taking the supremum over all $x^* \in G$ yields $\|x\|\leq 2$. For any $n\in s$, we have $e_n^*(x)=1$, so $\|x\|\geq 1$.
\end{proof}

\subsection{\textbf{Upper and Lower $\ell_2$ Estimates}}
\begin{lemma} \label{L1} Let $x^* = \sum_{i \in F} \lambda_i \alpha_i^* \in G$ and $x \in X_\alpha$. Then:
\begin{equation}
x^*(x) \le \left( \sum_{i \in F} \lambda_i^2 \right)^{1/2} \|x\|_\alpha
\end{equation}
\end{lemma}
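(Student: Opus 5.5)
The plan is a normalization argument: rescale the coefficients so that the functional lands back in $G$, and then apply the definition of $\|\cdot\|_\alpha$ as a supremum over $G$.

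Set $\Lambda = (\sum_{i\in F}\lambda_i^2)^{1/2}$. If $\Lambda = 0$, then every $\lambda_i=0$, so $x^*=0$ and the inequality is trivial. Otherwise, put $\mu_i = \lambda_i/\Lambda$. Then $\sum_{i\in F}\mu_i^2 = 1$, and $\{\beta_i^*\}_{i\in F}$ is still a vfg family with the same intervals $\{I_i\}_{i\in F}$. Hence, by property (c) in the definition of the norming set $G$,
\[
y^* = \sum_{i\in F}\mu_i\alpha_i^* \in G .
\]
Since $x^* = \Lambda y^*$, for $x\in c_{00}(\mathbb{N})$ we get
\[
x^*(x) = \Lambda\, y^*(x) \le \Lambda \sup\{z^*(x): z^*\in G\} = \Lambda\|x\|_\alpha .
\]

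For general $x\in X_\alpha$, note that $x^*$ is a finitely supported functional. Each $\alpha_i^*$ is a finite average, so $x^*$ extends continuously to $X_\alpha$: its absolute value is bounded by $\Lambda\|\cdot\|_\alpha$ on the dense subspace $c_{00}$, using the symmetry of $G$ for the sign. The inequality therefore passes to the completion by density.

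None of these steps is a genuine obstacle. The only point requiring care is checking that rescaling the $\lambda_i$ keeps the functional of the form allowed in (c): the generators and intervals are unchanged and only the $\ell_2$-constraint on the coefficients is affected, so this holds directly from the definition.
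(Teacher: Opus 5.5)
Your proof is correct and is essentially the paper's argument: the paper likewise observes that $\bigl(\sum_{i\in F}\lambda_i^2\bigr)^{-1/2}x^*\in G$ by property (c), and then applies the definition of $\|\cdot\|_\alpha$ as a supremum over $G$. Your treatment of the degenerate case $\sum_{i\in F}\lambda_i^2=0$ and of the density extension from $c_{00}$ to all of $X_\alpha$ fills in details that the paper leaves implicit.
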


\begin{proof}
We simply observe that
\begin{equation}
y^* = \left( \sum_{i \in F} \lambda_i^2 \right)^{-\frac{1}{2}}x^* \in G \quad \text{hence} \quad y^*(x) \le \|x\|_\alpha
\end{equation}
\end{proof}

\begin{proposition} Let $(x_n)_n$ be a normalized block sequence in $X_\alpha$. Then it admits an upper $\ell_2 $ estimate. In particular
\begin{equation}
\left\| \sum_{n=1}^\infty b_nx_n \right\|_\alpha \le 4 \left(\sum_{n=1}^\infty b_n^2 \right)^{1/2 }
\end{equation}
\end{proposition}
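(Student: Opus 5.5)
The plan is to fix a finite linear combination $x=\sum_n b_nx_n$ and a functional $x^*=\sum_{i\in F}\lambda_i\alpha_i^*\in G$, with generators $\{\beta_i^*\}_{i\in F}$, $\alpha_i^*=\beta_i^*|_{I_i}$ and $\sum_i\lambda_i^2\le 1$. I will then show $|x^*(x)|\le 4(\sum_n b_n^2)^{1/2}$. The case $x^*=\pm e_k^*$ is trivial, since then $|x^*(x)|\le\max_n|b_n|$. Since the norm is a supremum over $G$, this suffices.

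For each $i\in F$ let $N_i=\{n:\supp(\alpha_i^*)\cap\operatorname{ran}(x_n)\neq\emptyset\}$. We may discard the $i$ with $N_i=\emptyset$, since they do not act on $x$. The remaining indices split into
\[
A=\{i:\#N_i=1\},\qquad D=\{i:\#N_i\ge 2\}.
\]
For $n$ fixed, put $A_n=\{i\in A:N_i=\{n\}\}$.

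\textbf{Terms with $\#N_i=1$.} For each $n$, the functional $\sum_{i\in A_n}\lambda_i\alpha_i^*$ is again of the form allowed in (c), because subfamilies of vfg families are vfg (Remark \ref{R2}). It acts on $x$ only through $x_n$. By Lemma \ref{L1},
\[
\Big|\sum_{i\in A_n}\lambda_i\alpha_i^*(x)\Big|\le |b_n|\Big(\sum_{i\in A_n}\lambda_i^2\Big)^{1/2}\|x_n\|.
\]
The sets $A_n$ are disjoint and each $\|x_n\|=1$. Summing over $n$ and applying Cauchy--Schwarz therefore bounds this contribution by $(\sum_n b_n^2)^{1/2}$.

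\textbf{Terms with $\#N_i\ge2$.} This is the step I expect to be the main obstacle: a single average may spread over many blocks, so Lemma \ref{L1} cannot be applied block by block. Here I would use the averaging structure directly. Every $\beta_i^*\in\mathcal A$ has coefficients of absolute value $1/s(\beta_i^*)$ on $s(\beta_i^*)$ nodes, so $\|\alpha_i^*\|_{\ell_1}\le 1$. Also $\|x_n\|_\infty\le\|x_n\|=1$. Hence
\[
|\alpha_i^*(x)|\le\sum_{n\in N_i}|b_n|\,\frac{\#(\supp\alpha_i^*\cap\operatorname{ran}x_n)}{s(\beta_i^*)}\le\max_{n\in N_i}|b_n|.
\]
Let $n(i)\in N_i$ attain this maximum. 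Then Cauchy--Schwarz gives
\[
\Big|\sum_{i\in D}\lambda_i\alpha_i^*(x)\Big|\le\Big(\sum_{i\in D}b_{n(i)}^2\Big)^{1/2}.
\]
It remains to see that each $n$ lies in $N_i$ for at most two indices $i\in D$. The supports of the $\alpha_i^*$ are successive, and $N_i$ is an interval of integers. Suppose $i<j<k$ in $D$ all had $n\in N_i\cap N_j\cap N_k$. Then $\supp\alpha_j^*$ would lie strictly inside $\operatorname{ran}x_n$, forcing $N_j=\{n\}$, which contradicts $j\in D$. Consequently $\sum_{i\in D}b_{n(i)}^2\le 2\sum_n b_n^2$.

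Adding the two contributions yields
\[
|x^*(x)|\le(1+\sqrt2)\Big(\sum_n b_n^2\Big)^{1/2}\le 4\Big(\sum_n b_n^2\Big)^{1/2}.
\]
Taking the supremum over $x^*\in G$ gives the upper $\ell_2$ estimate for finite sums. The infinite case follows by passing to the limit of partial sums.
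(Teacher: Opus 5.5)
Your proof is correct. It uses the same ingredients as the paper but organizes them differently, and it gives the sharper constant $1+\sqrt{2}$.

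The paper partitions the \emph{pairs} $(n,i)$ with $\supp(x_n)\cap\supp(\alpha_i^*)\neq\emptyset$ into three classes by range containment: $E_n\subset I_i$, $I_i\subset E_n$, and crossing. It bounds these by $1$, $1$ and $2$ times $(\sum_n b_n^2)^{1/2}$, which is where the constant $4$ comes from. You instead partition the \emph{averages} themselves, according to how many block ranges their supports meet.

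Your class $A$ plays the role of the paper's Step 2, handled the same way: Lemma~\ref{L1} applied blockwise, then Cauchy--Schwarz. Your class $D$ takes over the paper's Steps 1 and 3 together. Rather than splitting each $\alpha_i^*(x)$ into contained and crossing contributions, you bound the whole evaluation by $\max_{n\in N_i}|b_n|$ using $\|\alpha_i^*\|_{\ell_1}\le 1$. The successiveness argument then shows each block is charged by at most two members of $D$. This single estimate costs $\sqrt{2}$ in place of the paper's $1+2$, and it shortens the bookkeeping to one counting argument instead of two.

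One small inaccuracy: $N_i$ need not be an interval of integers. The support of an average, for instance an incomparable one, can skip the entire range of an intermediate block. Your three-index argument only uses that $\ran(x_n)$ is an interval and that the supports of the $\alpha_i^*$ are successive, so nothing depends on that remark.
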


\begin{proof}
Let $x = \sum_{n=1}^m b_n x_n$. We will show that for any functional $x^* = \sum_{i=1}^l \lambda_i \alpha_i^* \in G$ defined by successive generators $\{\beta_i^*\}_{i=1}^l$ and bounded coefficients $\sum_{i=1}^l \lambda_i^2 \le 1$, we have $|x^*(x)| \le 4 \left( \sum_{n=1}^m b_n^2 \right)^{1/2}$.

Let $E_n = \text{ran}(x_n)$ and $I_i = \text{ran}(\alpha_i^*)$. Because $(x_n)_{n=1}^m$ is a block sequence, the intervals $E_n$ are pairwise disjoint and successive. Because the generators $\{\beta_i^*\}_{i=1}^l$ are successive, the ranges $I_i$ are also pairwise disjoint and successive intervals in $\mathbb{N}$.

We partition the pairs of indices $(n, i)$ for which $\text{supp}(x_n) \cap \text{supp}(\alpha_i^*) \neq \emptyset$ into three disjoint subsets:
\begin{enumerate}
    \item $S_1 = \{ (n, i) : E_n \subset I_i \}$
    \item $S_2 = \{ (n, i) : I_i \subset E_n\}\setminus S_1$
    \item $S_3 = \{ (n, i) : E_n \cap I_i \neq \emptyset \text{ and neither contains the other} \}$
\end{enumerate}

\textbf{Step 1: Evaluation on $S_1$.} 
For each $i \le l$, let $F_i = \{n : (n, i) \in S_1\}$. Since the intervals $I_i$ are pairwise disjoint, each $n$ belongs to at most one $F_i$. 
For any $n \in F_i$, since $E_n \subset I_i$, we have $\alpha_i^*(x_n) = \beta_i^*(x_n)$. Letting $S(\beta_i^*)$ be the set of nodes defining the average $\beta_i^*$, we set $m_n = \#(S(\beta_i^*) \cap E_n)$. The definition of the averages gives:
\[ |\alpha_i^*(x_n)| \le \sum_{p \in S(\beta_i^*) \cap E_n} |\beta_i^*(e_p)| \cdot |x_n(p)| \le \frac{m_n}{s(\beta_i^*)} \|x_n\|_\infty \le \frac{m_n}{s(\beta_i^*)} \]
Because the blocks $E_n$ are disjoint, $\sum_{n \in F_i} m_n \le s(\beta_i^*)$, which implies $\sum_{n \in F_i} \frac{m_n}{s(\beta_i^*)} \le 1$.
Therefore:
\[ \left| \alpha_i^* \left( \sum_{n \in F_i} b_n x_n \right) \right| \le \sum_{n \in F_i} |b_n| \frac{m_n}{s(\beta_i^*)} \le \max_{n \in F_i} |b_n| \equiv |b_{n_i}| \]
Summing over all $i$:
\[ \left| \sum_{i=1}^l \lambda_i \alpha_i^* \left( \sum_{n \in F_i} b_n x_n \right) \right| \le \sum_{i=1}^l |\lambda_i| |b_{n_i}| \le \left( \sum_{i=1}^l \lambda_i^2 \right)^{1/2} \left( \sum_{i=1}^l b_{n_i}^2 \right)^{1/2} \le 1 \cdot \left( \sum_{n=1}^m b_n^2 \right)^{1/2} \]

\textbf{Step 2: Evaluation on $S_2$.} 
For each $n \le m$, let $H_n = \{i : (n, i) \in S_2\}$. Since the blocks $E_n$ are pairwise disjoint, the sets $H_n$ are pairwise disjoint.
For a fixed $n$, the functional $x_n^* = \sum_{i \in H_n} \lambda_i \alpha_i^*$ represents a restriction of $x^*$ entirely onto $E_n$, so by Lemma~\ref{L1}:
\[ |x_n^*(x_n)| \le \left( \sum_{i \in H_n} \lambda_i^2 \right)^{1/2} \|x_n\|_\alpha = \left( \sum_{i \in H_n} \lambda_i^2 \right)^{1/2} \]
Summing over all $n$:
\[ \left| \sum_{n=1}^m b_n x_n^*(x_n) \right| \le \sum_{n=1}^m |b_n| \left( \sum_{i \in H_n} \lambda_i^2 \right)^{1/2} \le \left( \sum_{n=1}^m b_n^2 \right)^{1/2} \left( \sum_{n=1}^m \sum_{i \in H_n} \lambda_i^2 \right)^{1/2} \le 1 \cdot \left( \sum_{n=1}^m b_n^2 \right)^{1/2} \]

\textbf{Step 3: Evaluation on $S_3$.} 
For pairs $(n, i) \in S_3$, the intervals $E_n$ and $I_i$ overlap by crossing boundaries. Because $\{E_n\}$ and $\{I_i\}$ are sequences of pairwise disjoint intervals, each $I_i$ can cross at most two $E_n$ blocks (one containing its left endpoint, one containing its right). Thus, for each $i$, there are at most two indices $n_1(i)$ and $n_2(i)$ such that $(n, i) \in S_3$.
Similarly, each $E_n$ can cross at most two $I_i$ intervals. Thus, for each $n$, there are at most two indices $i$ such that $(n, i) \in S_3$.
We bound this contribution:
\[ \sum_{(n, i) \in S_3} |\lambda_i b_n \alpha_i^*(x_n)| \le \sum_{i=1}^l |\lambda_i| (|b_{n_1(i)}| + |b_{n_2(i)}|) \]
Applying the Cauchy-Schwarz inequality:
\[ \le \left( \sum_{i=1}^l \lambda_i^2 \right)^{1/2} \left( \sum_{i=1}^l (|b_{n_1(i)}| + |b_{n_2(i)}|)^2 \right)^{1/2} \le 1 \cdot \left( 2 \sum_{i=1}^l \left( b_{n_1(i)}^2 + b_{n_2(i)}^2 \right) \right)^{1/2} \]
Since each $n$ appears at most twice in the collection of crossing pairs $S_3$, the term $b_n^2$ is summed at most twice. Thus:
\[ \left( 2 \sum_{i=1}^l \left( b_{n_1(i)}^2 + b_{n_2(i)}^2 \right) \right)^{1/2} \le \left( 4 \sum_{n=1}^m b_n^2 \right)^{1/2} = 2 \left( \sum_{n=1}^m b_n^2 \right)^{1/2} \]

\textbf{Conclusion.} 
Summing the evaluations over the three disjoint partitions yields:
\[ |x^*(x)| \le 1 \left(\sum_{n=1}^m b_n^2\right)^{1/2} + 1 \left(\sum_{n=1}^m b_n^2\right)^{1/2} + 2 \left(\sum_{n=1}^m b_n^2\right)^{1/2} = 4 \left(\sum_{n=1}^m b_n^2\right)^{1/2} \]
Taking the supremum over all $x^* \in G$ gives $\left\| \sum_{n=1}^\infty b_n x_n \right\|_\alpha \le 4 \left( \sum_{n=1}^\infty b_n^2 \right)^{1/2}$.
\end{proof}

\begin{corollary}\label{cor_weak_null_Xalpha}
Every normalized block sequence $(x_n)_n$ in $X_\alpha$ is weakly null. In particular, the space $X_\alpha$ does not contain an isomorphic copy of $\ell_1$.
\end{corollary}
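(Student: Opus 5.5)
The plan is to derive weak nullity directly from the upper $\ell_2$ estimate of the preceding Proposition. Let $(x_n)_n$ be a normalized block sequence in $X_\alpha$ and fix $x^*\in X_\alpha^*$. Suppose, towards a contradiction, that $(x_n)_n$ is not weakly null. Then there exist $x^*$ with $\|x^*\|\le 1$, $\delta>0$, and a subsequence $(x_{n_k})_k$ such that $|x^*(x_{n_k})|\ge\delta$ for all $k$. Replacing $x^*$ by $-x^*$ and passing to a further subsequence, we may assume $x^*(x_{n_k})\ge\delta$ for all $k$.

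A subsequence of a normalized block sequence is again a normalized block sequence, so the preceding Proposition applies to $(x_{n_k})_k$. For each $N$ we take the average $y_N=\frac1N\sum_{k=1}^N x_{n_k}$. On the one hand, choosing $b_k=1/N$ for $k\le N$ gives
\[
\|y_N\|_\alpha\le 4\Big(\sum_{k=1}^N N^{-2}\Big)^{1/2}=\frac{4}{\sqrt N}.
\]
On the other hand, $x^*(y_N)\ge\delta$, so $\|y_N\|_\alpha\ge\delta$. Letting $N\to\infty$ yields a contradiction. Hence $x^*(x_n)\to 0$ for every $x^*\in X_\alpha^*$, that is, $(x_n)_n$ is weakly null. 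Equivalently, by Mazur's theorem, this shows that $0$ lies in the norm closure of the convex hull of every subsequence.

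For the second assertion, suppose $X_\alpha$ contains an isomorphic copy of $\ell_1$. Since $(e_n)_n$ is a Schauder basis, the standard gliding hump argument (Bessaga--Pe\l czy\'nski) gives a normalized block sequence $(x_n)_n$ of the basis that is equivalent to the unit vector basis of $\ell_1$; in fact a perturbation of a subspace of the $\ell_1$ copy suffices. Such a sequence is not weakly null, because the functional $\sum_n e_n^*$ on $\ell_1$, transported through the isomorphism and extended by Hahn--Banach, takes the value $1$ on every term. This contradicts the first part.

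Alternatively, the upper $\ell_2$ estimate directly contradicts the $\ell_1$ lower estimate $\|\sum_{k\le N} x_k\|\ge cN$ against $4\sqrt N$. There is no real obstacle here. The only point needing care is the reduction to block sequences in the $\ell_1$ part, which is the standard perturbation argument, and the estimate above handles all the rest.
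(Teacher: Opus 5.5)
Your proof is correct and takes the same approach as the paper. Weak nullity comes from the upper $\ell_2$ estimate, and you make explicit the averaging bound $\|\frac1N\sum_{k\le N}x_{n_k}\|_\alpha\le 4/\sqrt N$ that the paper leaves implicit; the $\ell_1$ statement then follows, as in the paper, by perturbing to a normalized block sequence equivalent to the $\ell_1$ basis, which cannot be weakly null.
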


\begin{proof}
The first part follows from every normalized block sequence admitting an upper $\ell_2$ estimate. Furthermore, if $\ell_1$ is embedded into $X_\alpha$, then by standard perturbation arguments there would exist a normalized block sequence equivalent to the $\ell_1$ basis, which is not weakly null.
\end{proof}

\begin{corollary}\label{C2}
The basis $(e_n)_n$ is a shrinking basis.
\end{corollary}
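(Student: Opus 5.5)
The plan is to use the standard characterization of shrinking bases (James): a Schauder basis $(e_n)_n$ of a Banach space is shrinking if and only if every bounded block sequence of $(e_n)_n$ is weakly null. Equivalently, for every $x^*\in X_\alpha^*$ we need $\|x^*|_{[n,\infty)}\|\to 0$. So the first step is to note that $(e_n)_n$ is a Schauder basis of $X_\alpha$. It is in fact bimonotone, since by property (b) the norming set $G$ is closed under restrictions to intervals. Hence for every interval $I$ and every $x^*\in G$ we have $x^*(P_I x)=(P_I^*x^*)(x)\le\|x\|_\alpha$, and so $\|P_I\|\le 1$.

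Next we argue by contradiction. Suppose the basis is not shrinking. Then there are $x^*\in X_\alpha^*$ with $\|x^*\|\le 1$, some $\delta>0$, and a sequence of successive intervals $E_1<E_2<\cdots$ such that $\|x^*|_{E_n}\|>\delta$ for every $n$. Pick normalized $x_n$ supported in $E_n$ with $x^*(x_n)>\delta$. Then $(x_n)_n$ is a normalized block sequence with $x^*(x_n)>\delta$ for all $n$, so it is not weakly null. This contradicts Corollary~\ref{cor_weak_null_Xalpha}.

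If one prefers a direct argument, the upper $\ell_2$ estimate of the preceding Proposition gives it quantitatively. We have
\[\delta N<x^*\Bigl(\sum_{n=1}^N x_n\Bigr)\le\Bigl\|\sum_{n=1}^N x_n\Bigr\|_\alpha\le 4\sqrt N,\]
which is impossible for $N>16/\delta^2$.

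There is no real obstacle here. The only points to check carefully are, first, that the basis is indeed a (bimonotone) Schauder basis, so that the block characterization of shrinking applies, and second, the routine passage from the failure of $\|x^*|_{[n,\infty)}\|\to0$ to successive finite intervals $E_n$ carrying norm at least $\delta$. The latter uses the density of $c_{00}$: each tail norm is approximated on a finite interval, and the next interval is started beyond it.
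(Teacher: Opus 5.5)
Your proof is correct and takes the same route as the paper. The paper's proof just cites the characterization that a basis is shrinking if and only if every bounded block sequence is weakly null, and then applies Corollary~\ref{cor_weak_null_Xalpha}. You go further by checking bimonotonicity and the reduction to successive intervals explicitly, and your estimate $\delta N<4\sqrt N$ is exactly the argument the paper uses for Corollary~\ref{cor_weak_null_Xalpha}.
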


\begin{proof}
A basis of a Banach space is shrinking if and only if every bounded block sequence is weakly null. Since the previous corollary establishes that every normalized (and hence every bounded) block sequence is weakly null, it follows that the basis $(e_n)_n$ is shrinking.
\end{proof}

\begin{lemma}\label{L3}
Let $(x_n)_n$ be a normalized block sequence in $X_\alpha$ with $\|x_n\|_\infty \to 0$. Then for every sequence $(\epsilon_k)_k$ of positive reals, there exist a subsequence $(x_{n_k})_k$ and a sequence $(x_k^*)_k \subset G$ satisfying the following:
\begin{enumerate}
    \item For every $k \in \mathbb{N}$, $x_k^*(x_{n_k}) > 1 - \epsilon_k$ and $\text{supp}(x_k^*) \subset \text{ran}(x_{n_k})$.
    \item\label{L3:item2} For every $k \in \mathbb{N}$, $x_k^* = \sum_{j \in F_k} \lambda_{j,k} \alpha_{j,k}^*$ with generators $\{\beta_{j,k}^*\}_{j \in F_k}$ and $\sum_{j \in F_k} \lambda_{j,k}^2 \le 1$.
    \item\label{L3:item3} For every $m \in \mathbb{N}$, the family $\bigcup_{k \le m} \{\beta_{j,k}^* : j \in F_k\}$ is a successive very fast growing family of averages.
\end{enumerate}
\end{lemma}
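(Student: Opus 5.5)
The plan is to construct the subsequence and the functionals inductively, choosing $x_{n_k}$ far enough out that the sizes of all averages appearing in a norming functional for it exceed $2^{h}$, where $h$ is the maximum of the supports of all previously chosen generators.

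\textbf{Key observation.} Let $x$ be a block with $\|x\|_\alpha=1$ and $\|x\|_\infty$ small. Let $x^*=\sum_{j\in F}\lambda_j\alpha_j^*\in G$ nearly norm $x$, and assume, as in Lemma~\ref{lem_branch_projection}, that each $\alpha_j^*$ has support in $\operatorname{ran}(x)$; this is possible by restricting the intervals $I_j$ to $\operatorname{ran}(x)$. Every $\alpha_j^*$ is a restriction of an average, so $|\alpha_j^*(x)|\le \frac{\#\supp(\alpha_j^*)}{s(\beta_j^*)}\|x\|_\infty\le \|x\|_\infty$. In particular, every generator with $s(\beta_j^*)\le M$ contributes at most $|\lambda_j|\,M^{-1}\cdot M\|x\|_\infty\le \|x\|_\infty$.

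These small generators are successive, and their number is bounded. Indeed, by Remark~\ref{R2} the sizes of a vfg family grow at least like an iterated exponential, so at most $M$ generators (in fact at most about $\log^* M+1$) have size $\le M$. Hence their total contribution is at most $M\|x\|_\infty$.

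\textbf{Step 1.} Set $h_0=0$. Suppose $x_{n_1},\dots,x_{n_{k-1}}$ and $x_1^*,\dots,x_{k-1}^*$ have been chosen. Let $h_{k-1}$ be the maximum of the supports of all generators $\beta^*_{j,r}$ with $r<k$; since their supports lie in $\operatorname{ran}(x_{n_r})$, we have $h_{k-1}\le\max\operatorname{ran}(x_{n_{k-1}})$. Put $M=2^{h_{k-1}}$.

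\textbf{Step 2.} Choose $n_k>n_{k-1}$ with $\min\operatorname{ran}(x_{n_k})>h_{k-1}$ and $M\|x_{n_k}\|_\infty<\epsilon_k/2$. This is possible because $\|x_n\|_\infty\to0$. Take $y^*\in G$ with $y^*(x_{n_k})>1-\epsilon_k/2$ and support in $\operatorname{ran}(x_{n_k})$.

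\textbf{Step 3.} Discard from $y^*$ every term whose generator has size $\le M$. By the observation, the remaining functional $x_k^*$ satisfies $x_k^*(x_{n_k})>1-\epsilon_k$. Removing terms keeps it in $G$ (Remark~\ref{R2}(2)) and keeps $\sum\lambda^2\le1$, so items (1) and (2) hold.

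\textbf{Step 4: verifying item (3).} The generators of $x_k^*$ lie to the right of $h_{k-1}$. Their supports might extend beyond $\operatorname{ran}(x_{n_k})$, but we may replace each generator by its restriction to $\operatorname{ran}(x_{n_k})$, renormalised as an average. This is the same trick as in the proof of the second lemma of Section~2: the restriction of a comparable average to an interval is a consecutive portion of its chain, and the restriction of an incomparable average is again an antichain.

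The cost of this replacement is controlled by adjusting $\lambda_j$ to $\lambda_j\cdot\#\supp(\alpha_j^*)/s(\beta_j^*)$, which has absolute value at most $|\lambda_j|$. The catch is that the new size equals $\#\supp(\alpha_j^*)$, which may be small. So the order matters: first restrict and renormalise, then discard the terms of size $\le M$; the observation still bounds the discarded mass.

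\textbf{Main obstacle.} The delicate point is the comparable case. There the retained part is a consecutive portion of the alternating sum, possibly of odd length, which is not literally a comparable average. I would handle this by dropping at most one endpoint node. That node contributes at most $\|x\|_\infty$ per generator, and summed over the boundedly many relevant generators this is absorbed into the $\epsilon_k$ margin, after shrinking $\|x_{n_k}\|_\infty$ further.

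After this, the first retained generator has size $>M=2^{h_{k-1}}$ and lies after $h_{k-1}$, so the union over $k\le m$ is successive and vfg. This gives item (3).
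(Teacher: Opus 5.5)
\textbf{There is a genuine gap in Step 4.} When you replace a generator $\beta_j^*$ by the average supported on $\supp(\alpha_j^*)$, its size drops from $s(\beta_j^*)$ to $\#\supp(\alpha_j^*)$. Nothing then guarantees the vfg inequality $s(\tilde\beta_j^*)>2^{\max\supp(\tilde\beta_{j-1}^*)}$ between consecutive generators of the \emph{same} functional $x_k^*$. Discarding the new sizes $\le M$ only secures the junction with the generators of $x_1^*,\dots,x_{k-1}^*$, and that junction is all your final paragraph checks.

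For example, suppose two consecutive pieces $\alpha_{j-1}^*,\alpha_j^*$ each retain $M+1$ nodes. Both survive the discard. But vfg would require $M+1>2^{\max\supp(\tilde\beta_{j-1}^*)}\ge 2^{M+1}$, using $s\le\max\supp$ from Remark~\ref{R2}(1). So item (3) is not established. Even restricting only to $\operatorname{ran}(x_{n_k})$ would break vfg at the last generator, which gets truncated on the right.

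Two smaller points are fixable. The count ``at most $M$ generators of size $\le M$'' relies on the sizes increasing, which fails after renormalisation; the discarded mass can instead be bounded by $M\|x\|_\infty\sum_j 1/s(\beta_j^*)$. The odd-length endpoint cost should be summed via the factor $1/s(\beta_j^*)$, not via ``boundedly many'' generators. Neither repair, however, restores the vfg property.

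\textbf{The missing idea is that no renormalisation is needed.} Generators whose supports extend to the right of $\operatorname{ran}(x_{n_k})$ are harmless. Item (1) only concerns $\supp(x_k^*)$. And since $h_k$ is defined as the maximum over all generator supports, the next choice $\min\operatorname{ran}(x_{n_{k+1}})>h_k$ works regardless. After discarding the terms with $\alpha_j^*(x_{n_k})=0$, the only generator of $y^*$ that can reach to the left of $h_{k-1}$ is the first one, $\beta_1^*$. A large-size $\beta_1^*$ that starts before $h_{k-1}$ is exactly the case your Step 3 misses.

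The paper simply drops $\beta_1^*$, at cost $|\lambda_1\alpha_1^*(x_{n_k})|\le\|\alpha_1^*\|_{\ell_1}\|x_{n_k}\|_\infty\le\|x_{n_k}\|_\infty$. For $j\ge 2$, successiveness and the vfg property inside $y^*$ give $\min\supp(\beta_j^*)>\max\supp(\beta_1^*)\ge\min\operatorname{ran}(x_{n_k})>h_{k-1}$. They also give $s(\beta_2^*)>2^{\max\supp(\beta_1^*)}>2^{h_{k-1}}$. Hence the original, unmodified generators $\beta_2^*,\beta_3^*,\dots$ extend the earlier family to a vfg family. This also makes your size threshold $M$ unnecessary: $\|x_{n_k}\|_\infty<\epsilon_k/2$ together with $y^*(x_{n_k})>1-\epsilon_k/2$ suffices.
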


\begin{proof}
We proceed by induction on $k$.

For $k = 1$, choose $n_1$ such that $\|x_{n_1}\|_\infty < \epsilon_1$, and let $y_1^* \in G$ with $y_1^*(x_{n_1}) = 1$ and $\text{supp}(y_1^*) \subset \text{ran}(x_{n_1})$. Write $y_1^* = \sum_{j \in F_1} \lambda_{j,1} \alpha_{j,1}^*$ with generators $\{\beta_{j,1}^*\}_{j \in F_1}$. We simply set $x_1^* = y_1^*$, which naturally satisfies $x_1^*(x_{n_1}) = 1 > 1 - \epsilon_1$ and its generators form a valid vfg family.

Assume that for $m > 1$ we have chosen $\{x_{n_1}, \dots, x_{n_{m-1}}\}$ and $\{x_1^*, \dots, x_{m-1}^*\}$ satisfying conditions (1), (2), and (3). For each $1 \le k \le m-1$, let $H_k = \{\beta_{j,k}^* : j \in F_k\}$ denote the generators of $x_k^*$. Set:
\[ h_{m-1} = \max \left\{ \max \supp(\beta_{j,k}^*) : 1 \le k \le m-1, \, j \in F_k \right\} \]

Since $\|x_n\|_\infty \to 0$, we choose $n_m > n_{m-1}$ such that $\min \text{ran}(x_{n_m}) > h_{m-1}$, and
\[ \|x_{n_m}\|_\infty < \epsilon_m. \]
We select $y_m^* \in G$ such that $y_m^*(x_{n_m}) = 1$ with $\text{supp}(y_m^*) \subset \text{ran}(x_{n_m})$. Write $y_m^* = \sum_{j =1 }^{N_m} \lambda_{j,m} \alpha_{j,m}^*$ with generators $\{\beta_{j,m}^* : 1\leq j \leq N_m\}$. Without loss of generality, we may assume that, for all $1\leq j\leq N_m$, $\alpha_{j,m}^*(x_{n_m})\neq 0$. Because the generators $\{\beta_{j,m}^* : 1\leq j \leq N_m\}$ are successive, any generator $j > 1$ must satisfy:
\[\min\{\log_2(s(\beta_{j,m}^*)),\min\supp(\beta_{j,m}^*)\} > \max \supp(\beta_{1,m}^*) \ge \min \text{ran}(x_{n_m}) > h_{m-1}\]
Thus, $\beta_{1,m}^*$ is the \emph{only} generator that could possibly have $\min \supp(\beta_{j,m}^*) \le h_{m-1}$ or $s(\beta_{j,m}^*) \le 2^{h_{m-1}}$.  Define $F_m = \{2,\ldots,N_m\}$ and $x_m^* = \sum_{j\in F_m} \lambda_{j,m} \alpha_{j,m}^*$. Clearly, \eqref{L3:item2} and \eqref{L3:item3} are satisfied. Furthermore, because $\|\alpha_{1,m}^*\|_1\leq 1$,
\[ \left|\alpha_{1,m}^*(x_{n_m}) \right| \le \|x_{n_m}\|_\infty< \epsilon_m.\]
Then $x_m^*(x_{n_m}) = y_m^*(x_{n_m}) - \lambda_{1,m} \alpha_{1,m}^*(x_{n_m}) > 1 - \epsilon_m$.
\end{proof}

\begin{remark}\label{R3}
A direct consequence of Lemma~\ref{L3} is that the sequence $(x_k^*)_k$ constructed above has the property that for every sequence of scalars $(\lambda_k)_{k=1}^m$ with $\sum_{k=1}^m \lambda_k^2 \le 1$, the linear combination
\[ x^* = \sum_{k=1}^m \lambda_k x_k^* = \sum_{k=1}^m \sum_{j \in F_k} (\lambda_k \lambda_{j,k}) \alpha_{j,k}^* \]
belongs to $G$. This holds because $\bigcup_{k=1}^m \{\beta_{j,k}^* : j \in F_k\}$ is a vfg family and 
\[\sum_{k=1}^m \sum_{j \in F_k} (\lambda_k \lambda_{j,k})^2 = \sum_{k=1}^m \lambda_k^2 \sum_{j \in F_k} \lambda_{j,k}^2 \le \sum_{k=1}^m \lambda_k^2 \le 1\]
\end{remark}

\begin{proposition}\label{P2}
Let $(x_n)_{n=1}^\infty$ be a normalized block sequence in $X_\alpha$ such that $\|x_n\|_\infty \to 0$ as $n \to \infty$. Then, for every $\epsilon > 0$, there exists a subsequence $(x_{n_k})_{k=1}^\infty$ that admits a $(1-\epsilon)$ lower $\ell_2$ estimate; i.e., for any sequence of scalars $(b_k)_{k=1}^m$, we have:
\begin{equation}
\left\| \sum_{k=1}^m b_k x_{n_k} \right\|_\alpha \ge (1 - \epsilon) \left( \sum_{k=1}^m b_k^2 \right)^{1/2}
\end{equation}
\end{proposition}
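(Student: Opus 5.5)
The plan is to apply Lemma~\ref{L3} together with Remark~\ref{R3}, with the sequence $(\epsilon_k)_k$ chosen so that $1-\epsilon_k \ge 1-\epsilon$ for every $k$. The simplest choice is $\epsilon_k=\epsilon$ for all $k$; any sequence with $\sup_k \epsilon_k\le\epsilon$ works.

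Lemma~\ref{L3} then yields a subsequence $(x_{n_k})_k$ and functionals $(x_k^*)_k\subset G$ with three properties:
\begin{itemize}
\item $\supp(x_k^*)\subset \ran(x_{n_k})$;
\item $x_k^*(x_{n_k})>1-\epsilon$;
\item for every $m$, the union of the generators of $x_1^*,\dots,x_m^*$ is a successive vfg family.
\end{itemize}
By Remark~\ref{R3}, for any scalars $(\lambda_k)_{k=1}^m$ with $\sum_k\lambda_k^2\le 1$, the functional $\sum_{k=1}^m\lambda_k x_k^*$ belongs to $G$.

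Now fix scalars $(b_k)_{k=1}^m$, not all zero, and set $\lambda_k = b_k/(\sum_j b_j^2)^{1/2}$. Since $G$ is symmetric, replacing $x_k^*$ by $-x_k^*$ when $b_k<0$ keeps us inside $G$. Equivalently, in Remark~\ref{R3} one may use signed coefficients $\lambda_k$, since only $\sum\lambda_k^2\le1$ matters.

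The ranges of the $x_{n_k}$ are pairwise disjoint, and $\supp(x_k^*)\subset\ran(x_{n_k})$. Hence $x_k^*(x_{n_j})=0$ for $k\neq j$, and
\[
\Big(\sum_k\lambda_kx_k^*\Big)\Big(\sum_k b_kx_{n_k}\Big)=\sum_k\lambda_k b_k\, x_k^*(x_{n_k}).
\]
Each term in the last sum is $\lambda_k b_k\, x_k^*(x_{n_k})$ with $\lambda_k b_k = b_k^2/(\sum_j b_j^2)^{1/2}\ge 0$ and $x_k^*(x_{n_k})>1-\epsilon$. So the sum is at least $(1-\epsilon)\,\sum_k b_k^2/(\sum_j b_j^2)^{1/2}=(1-\epsilon)(\sum_k b_k^2)^{1/2}$. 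Taking the supremum over $G$ in the definition of $\|\cdot\|_\alpha$ gives the claimed lower $\ell_2$ estimate.

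The only delicate point is that Lemma~\ref{L3} and Remark~\ref{R3} already carry the real work. The vfg compatibility across different $k$ is obtained by discarding the first generator of each norming functional, and this is where the hypothesis $\|x_n\|_\infty\to0$ is used. Given those results, the remaining argument is just the disjoint-support evaluation above. One should also check that multiplying each $x_k^*$ by $\pm1$ preserves the generator structure. It does: the sign can be absorbed into the $\lambda_{j,k}$, so the vfg family of generators is unchanged.
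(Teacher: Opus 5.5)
Your proposal is correct and follows the paper's proof: apply Lemma~\ref{L3} with $\epsilon_k=\epsilon$, use Remark~\ref{R3} to place $\sum_k\lambda_k x_k^*$ in $G$ with $\lambda_k=b_k/(\sum_j b_j^2)^{1/2}$ (signed coefficients are allowed there), and evaluate using the disjointness of supports. Your ``at least'' in the final estimate is slightly more careful than the paper's strict inequality, which fails when some $b_k=0$.
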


\begin{proof}
Fix $\epsilon > 0$. Apply Lemma~\ref{L3} with $\epsilon_k = \epsilon$ for all $k \in \mathbb{N}$ to obtain a subsequence $(x_{n_k})_k$ and a sequence of functionals $(x_k^*)_k \subset G$ satisfying conditions (1)--(3).

Let $(b_k)_{k=1}^m$ be an arbitrary finite sequence of scalars, and set $\lambda_k = \frac{b_k}{\left(\sum_{j=1}^m b_j^2\right)^{1/2}}$ so that $\sum_{k=1}^m \lambda_k^2 = 1$. By Remark~\ref{R3}, the functional $x^* = \sum_{k=1}^m \lambda_k x_k^*$ belongs to $G$.

Since $\text{supp}(x_k^*) \subset \text{ran}(x_{n_k})$ and $(x_{n_k})_k$ is a block sequence, $x_k^*(x_{n_l}) = 0$ for all $k \neq l$. Evaluating $x^*$ on $\sum_{k=1}^m b_k x_{n_k}$ yields:
\[ x^*\left( \sum_{k=1}^m b_k x_{n_k} \right) = \sum_{k=1}^m \lambda_k b_k x_k^*(x_{n_k}) > \sum_{k=1}^m \frac{b_k^2}{\left(\sum_{j=1}^m b_j^2\right)^{1/2}} (1 - \epsilon) = (1 - \epsilon) \left( \sum_{k=1}^m b_k^2 \right)^{1/2} \]
Taking the supremum over all elements in $G$ yields the required lower bound.
\end{proof}

\begin{lemma}\label{L9}
Let $x \in X_\alpha$ with finite support, $k \in \N$, and a level set $L_m = \{ p \in \N : |p| = m \}$ such that $k < \min L_m$ and $\max L_m < \min \supp(x)$. Then for every functional $\alpha^* = \beta^*|_I$ where $I \subset \ran(x)$ and $\beta^*$ is an average, there exists another average $\beta_1^*$ such that:
\begin{enumerate}
    \item[(i)] $\beta_1^*|_I = \alpha^*$ and $s(\beta_1^*) = s(\beta^*)$.
    \item[(ii)] $k < \min \supp(\beta_1^*)$.
\end{enumerate}
\end{lemma}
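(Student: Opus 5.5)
Write $S = S(\beta^*)$ for the set of nodes defining $\beta^*$. Split it as $S_{in} = S \cap I$, $S_{<} = S \cap [1,\min I)$ and $S_{>} = S\cap (\max I,\infty)$. The plan is to keep $S_{in}\cup S_{>}$ unchanged and to replace each node of $S_{<}$ by a new node lying strictly between $k$ and $\min I$, without changing the structure of the average. Then the restriction to $I$ is untouched, and the size is unchanged because the replacement is a bijection onto an equal-sized set of new nodes. The level $L_m$ provides the room: every node of $L_m$ is $>k$ and $<\min\supp(x)\le \min I$ (we may assume $I\subset \ran(x)$ is nonempty, so $\min I\ge \min\ran(x)=\min\supp(x)$). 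Moreover, every node of height $\ge m$ lying below $\min I$ in the natural order is also $>k$, since height $\ge m$ forces the node to be $\ge \min L_m > k$ by the compatibility of $\preceq$ with $\le$ (nodes are enumerated level by level, as with $V$).

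\textbf{Incomparable case.} Suppose $\beta^* = \frac1{k'}\sum_{n\in S} e_n^*$ with $S$ an antichain of size $k'$. If $S_{in}=\emptyset$ then $\alpha^*=0$; it suffices to take any antichain of size $k'$ consisting of nodes $>\max I$ of a deep level, which exists since levels are large. Otherwise, take the nodes $S_{in}\cup S_{>}$, which have height $\ge$ the height of $\min\supp(x)>\max L_m$, and so have height $> m$. The set $L_m$ has $2^m$ elements, and each element of $S_{in}\cup S_>$ lies below exactly one node of $L_m$. We must add $\#S_<$ new nodes, pairwise incomparable and incomparable to $S_{in}\cup S_>$, all in $(k,\min I)$. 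Here the counting is the real obstacle. I would instead choose the new nodes at deep levels, greater than $\max I$ (placing them after $I$ is also allowed, since they need only avoid $I$ and exceed $k$). Within a sufficiently deep level $L_M$ with $M>\max I$, there are $2^M$ nodes, and only the descendants of the $\le k'$ nodes of $S_{in}\cup S_>$ are excluded. That excludes at most $k'2^{M-m'}$ nodes, where $m'$ is the smallest height occurring, with $m'\ge1$. This leaves at least $2^{M-1}\ge k'$ free nodes, which are pairwise incomparable because they share a level. Taking $S_1 = S_{in}\cup S_>\cup\{\text{new nodes}\}$ gives $\beta_1^*|_I=\alpha^*$, $s(\beta_1^*)=k'$, and $\min\supp\beta_1^* \ge \min\supp(x) > k$.

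\textbf{Comparable case.} Suppose $\beta^*=\frac1{2k'}\sum(e^*_{n_l}-e^*_{m_l})$ on a chain $n_1\prec m_1\prec\dots$. The chain's part in $I$ is a consecutive portion of the alternating list. Let $p$ be the first chain node in $I$, and let $r$ be the number of chain nodes preceding $p$. We need a new chain of $r$ nodes below $p$ lying in $(k,\min I)$, with the signs preserved by position. The obstacle is room: the chain from the root to $p$ has only $|p|$ nodes. Since every node of height $\ge m$ is $>k$, the usable predecessors are those of height in $[m,|p|)$. Their number is $|p|-m\ge |p| - |\min L_m|$, which is possibly smaller than $r$. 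The original $r$ predecessors all lie below $p$, so $r\le|p|$, but some may have height $<m$.

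To fix this, I would exploit the freedom to re-position $p$ itself. This is not allowed, since $I$'s part is fixed. So instead, if room is insufficient, I would reorganize the chain by placing the excess prefix nodes after $I$, extending the chain beyond $\max(\text{chain})$ along descendants. A cyclic shift of the alternating pattern keeps the pairing $n\prec m$ only if the count is even. I expect this parity and ordering bookkeeping, which keeps the $\pm$ pattern a valid comparable average, to be the main difficulty. The first thing I would try is the following: move the prefix, which has $r$ nodes, to after the last chain node, and then, if $r$ is odd, use the structure that the first retained coefficient then pairs properly. Only failing that would I rely on the height bound $r\le |p|-m$, derived from the fact that predecessors in the original chain already lie between levels.
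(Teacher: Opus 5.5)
\textbf{The comparable case is not completed.} The obstacle you hit comes from trying to rebuild all $r$ nodes that precede $p$. Only the restriction to $I$ and the size $2k'$ must be preserved, so the prefix can simply be discarded.

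If $p=\min(I\cap\supp\beta^*)$ carries the sign $+$, keep the $c$ retained nodes and append $2k'-c$ nodes beyond $\max I$, along descendants of the last retained node. Your ``move the prefix after the chain'' achieves this when $r$ is even.

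When $r$ is odd, however, $p$ carries the sign $-$, and nothing placed after $\max I$ can serve as its $+$ partner. The nodes of $\beta_1^*$ in $I$ are forced to be the retained ones, and $\preceq$ is compatible with $\le$. So the partner must be some $q\prec p$ with $q<\min I$. Your claim that the first retained coefficient ``then pairs properly'' is therefore false. The fallback $r\le|p|-m$ also fails, as you note yourself.

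The missing idea is that exactly one node before $I$ is needed, and the level $L_m$ is there to supply it. Since $p\ge\min\supp(x)>\max L_m$, we have $|p|>m$, so $p$ has a unique ancestor $q\in L_m$. Then $k<\min L_m\le q\le\max L_m<\min\supp(x)\le\min I$. Take $q$ with sign $+$, then the retained nodes, then $2k'-c-1$ nodes beyond $\max I$. This count is nonnegative: the original $+$ partner of $p$ lies outside $I$, so $c\le 2k'-1$. This gives the required $\beta_1^*$, and it is the paper's argument.

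\textbf{The counting in the incomparable case is wrong.} The bound $k'2^{M-m'}$ on the excluded descendants is not at most $2^{M-1}$ in general, and the conclusion you draw from it is false. Take the labeling $V$, $m=1$, $k=1$, $S=\{4,5,6,7\}$, $I=[5,7]$ and $S_<=\{4\}$. Then only $2^{M-2}$ nodes of $L_M$ are free, not $2^{M-1}$.

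What you actually need is still true, for a different reason. The descendants of the removed nodes $n\in S_<$ are incomparable with each other and with the remaining nodes of the antichain. So you can replace each $n\in S_<$ by a descendant $k_n>\max I$, which settles the case without any counting; this is what the paper does.
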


\begin{proof}
There are two cases to consider.

\textbf{Case 1:} $\beta^*$ is a comparable average ($\beta^* \in \mathcal{A}_c$). 
We set $k_0 = \min (I \cap \supp(\beta^*))$. 
If $\beta^*(k_0) > 0$, then we extend $\alpha^*$ to a comparable average $\beta_1^*$ of size $s(\beta_1^*) = s(\beta^*)$ by appending elements strictly greater than $\max I$. Because $k_0 \in I \subset \ran(x)$, we have $k < \min L_m < \min \supp(x) \le k_0 = \min \supp(\beta_1^*)$.
If $\beta^*(k_0) < 0$, we first add to $\alpha^*$ the unique node $p \in L_m$ such that $p \prec k_0$ (which exists and is unique by the tree structure). We then complete the new comparable average $\beta_1^*$ to size $s(\beta^*)$ by adding nodes strictly greater than $\max I$ as before. In this case, $\min \supp(\beta_1^*) = p \in L_m$, and since $k < \min L_m \le p$, we again have $k < \min \supp(\beta_1^*)$.

\textbf{Case 2:} $\beta^*$ is an incomparable average ($\beta^* \in \mathcal{A}_{inc}$). 
In this case, we substitute every node $n \in \supp(\beta^*)$, $ n < \min I$ with some $k_n \succ n$ such that $k_n > \max I$. Because the nodes of an incomparable average lie on distinct branches, extending them higher up their respective branches maintains their incomparability. The resulting functional $\beta_1^*$ is an incomparable average of the same size, its restriction to $I$ remains zero everywhere except where it coincides with $\alpha^*$, and all its supporting nodes are strictly bounded below by $\min I \ge \min \supp(x) > k$.

This completes the proof.
\end{proof}

\noindent\textbf{Notation.} For a fixed representation 
\[
x^*=\sum_{i\in F}\lambda_i\alpha_i^*\in G,
\qquad \alpha_i^*=\beta_i^*|_{I_i},
\qquad \sum_{i\in F}\lambda_i^2\leq 1,
\]
where $\{\beta_i^*\}_{i\in F}$ is a nonempty successive very fast growing family of averages, we define
\[
\min s(x^*)=\min_{i\in F}s(\beta_i^*).
\]
We also define
\[
\min\operatorname{supp\,gen}(x^*)=\min_{i\in F}\min\supp(\beta_i^*).
\]

This notation depends on the chosen representation, including its generators. Unless explicitly specified, a representation is understood to be fixed.

\begin{lemma}\label{L10}
Let $(x_n)_{n=1}^\infty$ be a normalized block sequence in $X_\alpha$ and let $\theta > 0$. Let also $(x_n^*)_{n=1}^\infty \subset G$ be a sequence of functionals such that for all $n \in \mathbb{N}$:
\begin{enumerate}
    \item $x_n^*(x_n) > \theta$
    \item $\min s(x_n^*) \to \infty$ as $n \to \infty$
\end{enumerate}
Then $(x_n)_{n=1}^\infty$ has a subsequence admitting a lower $\ell_2$ estimate with constant $\frac{\theta}{2}$.
\end{lemma}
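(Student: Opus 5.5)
Assume the $x_n^*$ are normalized as in Lemma~\ref{L3}: each $x_n^*$ is restricted to $\operatorname{ran}(x_n)$ and its generators are shortened so that the union over a subsequence is very fast growing. Remark~\ref{R3} then gives a norming functional $\sum_k \lambda_k x_k^*\in G$, and the argument of Proposition~\ref{P2} produces the lower estimate. The hypothesis $\min s(x_n^*)\to\infty$ replaces $\|x_n\|_\infty\to 0$, and the factor $\tfrac12$ pays for the cuts.

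\textbf{Step 1: restriction to the range.} Replace $x_n^*$ by $P_{\operatorname{ran}(x_n)}^*x_n^*$. It is still in $G$ by (b), its value at $x_n$ is unchanged, and each new piece $\alpha_i^*|_{\operatorname{ran}(x_n)}$ has the same generator $\beta_i^*$, since we only intersect the intervals $I_i$ with $\operatorname{ran}(x_n)$. Discard the pieces that vanish on $x_n$.

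\textbf{Step 2: shifting generators forward with Lemma~\ref{L9}.} Build the subsequence inductively. Given $n_1<\dots<n_{k-1}$ with accumulated maximum support $h$ of the generators chosen so far, pick a level $L_m$ with $\min L_m>h$. Then pick $n_k$ so that $\min\operatorname{supp}(x_{n_k})>\max L_m$ and $\min s(x_{n_k}^*)>2^{h'}$ for a suitable $h'$. Lemma~\ref{L9}(ii) with $k=h$ replaces each generator $\beta_i^*$ of $x_{n_k}^*$ by some $\beta_{1,i}^*$ with the same restriction and size but $\min\operatorname{supp}>h$. This makes the new generators start after the previous ones.

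\textbf{Step 3: restoring successiveness inside one functional.} The replaced generators of a single $x_{n_k}^*$ need not be successive with respect to each other, because Lemma~\ref{L9} appends nodes beyond $\max I_i$. This is the main obstacle. To handle it:
\begin{enumerate}
\item[(a)] Apply Lemma~\ref{L9} only to the first generator $\beta_{1}^*$ of $x_{n_k}^*$, whose support may dip below $\operatorname{ran}(x_{n_k})$.
\item[(b)] Keep the later generators as they are. They already lie after $\max\operatorname{supp}\beta_1^*\ge\min\operatorname{ran}(x_{n_k})>h$, so they are successive and satisfy the vfg condition, as in the proof of Lemma~\ref{L3}.
\item[(c)] Make sure the extended $\beta_{1,1}^*$ does not overlap $\beta_2^*$. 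In Lemma~\ref{L9}, the extension of $\beta_1^*$ may be placed beyond $\max I_1$ but before $\min\operatorname{supp}\beta_2^*$, when there is room.
\end{enumerate}
If there is no room for (c), split $x_{n_k}^*$ into two functionals:
\begin{enumerate}
\item[(i)] the first piece $\lambda_1\alpha_1^*$;
\item[(ii)] the rest $\sum_{i\ge2}\lambda_i\alpha_i^*$.
\end{enumerate}
One of these has value $>\theta/2$ on $x_{n_k}$. Keep that one: the first piece alone (its single shifted generator is trivially vfg), or the rest, which is already fine. This split is where the constant $\theta/2$ comes from.

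The vfg condition between consecutive blocks requires $s(\beta_{1,1}^*)=s(\beta_1^*)\ge\min s(x_{n_k}^*)>2^{h}$. This is exactly what the choice of $n_k$ via hypothesis (2) guarantees.

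\textbf{Step 4: the lower estimate.} For the resulting $y_k^*\in G$ we have $\operatorname{supp}(y_k^*)\subset\operatorname{ran}(x_{n_k})$, $y_k^*(x_{n_k})>\theta/2$, and the union of all generators over $k\le m$ is vfg. By Remark~\ref{R3}, $\sum_k\lambda_k y_k^*\in G$ whenever $\sum_k\lambda_k^2\le1$. Take $\lambda_k=b_k/(\sum_j b_j^2)^{1/2}$ and evaluate as in Proposition~\ref{P2}; this gives
\[
\Bigl\|\sum_k b_kx_{n_k}\Bigr\|\ge\frac{\theta}{2}\Bigl(\sum_k b_k^2\Bigr)^{1/2}.
\]
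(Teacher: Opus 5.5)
Your proposal is correct and is essentially the paper's proof. You restrict $x_{n_k}^*$ to $\operatorname{ran}(x_{n_k})$, split it into the first piece $\lambda_1\alpha_1^*$ and the rest, and keep whichever exceeds $\theta/2$ on $x_{n_k}$. In the first-piece case you use Lemma~\ref{L9} (with a level set placed between the earlier generators and $x_{n_k}$) to push the generator past the earlier ones, and the vfg size condition comes from $\min s(x_{n_k}^*)>2^{h}$.

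The one thing to drop is the optional step (c). It only guarantees that the extended generator does not overlap $\beta_2^*$; it does not give $s(\beta_2^*)>2^{\max\supp(\beta_{1,1}^*)}$. That inequality can fail, because Lemma~\ref{L9} may push $\max\supp(\beta_{1,1}^*)$ beyond $\max\supp(\beta_1^*)$. Always splitting, as the paper does, avoids the issue.
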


\begin{proof}
We will construct a subsequence $(x_{n_k})_{k=1}^\infty$ and a sequence of functionals $(z_k^*)_{k=1}^\infty \subset G$ such that for all $k$, $z_k^*(x_{n_k}) > \frac{\theta}{2}$, $\text{supp}(z_k^*) \subset \text{ran}(x_{n_k})$, and the union of all generators of $(z_k^*)_{k=1}^\infty$ forms a successive very fast growing (vfg) family.

We proceed by induction. For $k=1$, set $n_1 = 1$. We define $y_1^* = x_{n_1}^*|_{\text{ran}(x_{n_1})} = \sum_{j \in F_1} \lambda_{j,1} \alpha_{j,1}^*$. We have $y_1^*(x_{n_1}) > \theta > \frac{\theta}{2}$. We set $z_1^* = y_1^* \in G$. Let 
$$h_1 = \max \supp(\text{generators of } z_1^*).$$

Assume that for $m > 1$, we have selected $n_1 < n_2 < \dots < n_{m-1}$ and functionals $z_1^*, \dots, z_{m-1}^* \subset G$ satisfying the inductive hypothesis. Let 
$$h_{m-1} = \max \supp(\text{generators of } z_{m-1}^*).$$
We also choose a $q_{m-1}\in \N $ such that 
\[ \min L_{q_{m-1}} = \min \{ k \in \N : |k| = q_{m-1}\} > 2^{h_{m-1}} \]

Because $(x_n)_{n=1}^\infty$ is a block sequence and $\min s(x_n^*) \to \infty$, we can choose $n_m > n_{m-1}$ sufficiently large such that:
\begin{enumerate}
    \item[(i)] $\min \text{ran}(x_{n_m}) > \max L_{q_{m-1}}$
    \item[(ii)] $\min s(x_{n_m}^*) > 2^{h_{m-1}}$
\end{enumerate}

Let $y_m^* = x_{n_m}^*|_{\text{ran}(x_{n_m})} = \sum_{j=1}^{r_m} \lambda_{j,m} \alpha_{j,m}^* \in G$, where the generators $\beta_{1,m}^*, \dots, \beta_{r_m,m}^*$ are successive. We know that $y_m^*(x_{n_m}) > \theta$. We decompose this evaluation into the first generator and the rest:
\begin{equation}
y_m^*(x_{n_m}) = \lambda_{1,m} \alpha_{1,m}^*(x_{n_m}) + \sum_{j=2}^{r_m} \lambda_{j,m} \alpha_{j,m}^*(x_{n_m}) > \theta.
\end{equation}
Since the sum is strictly greater than $\theta$, we must have one of two cases:

{ \bf 1:} $\sum_{j=2}^{r_m} \lambda_{j,m} \alpha_{j,m}^*(x_{n_m}) > \frac{\theta}{2}$.

{ \bf 2:} $\lambda_{1,m} \alpha_{1,m}^*(x_{n_m}) > \frac{\theta}{2}$.

We construct the functional $z_m^*$ depending on which case holds:

\textbf{ Case 1:}
Assume Case 1 holds. Since $\alpha_{1,m}^*$ has non-empty support on $\text{ran}(x_{n_m})$, we have $\max \supp(\beta_{1,m}^*) \ge \min \text{ran}(x_{n_m}) > h_{m-1}$. Because the generators are successive, this implies that for all $j \ge 2$, $\min \supp(\beta_{j,m}^*) > \max \supp(\beta_{1,m}^*) > h_{m-1}$.
We define $z_m^* = \sum_{j=2}^{r_m} \lambda_{j,m} \alpha_{j,m}^* \in G$. Then $z_m^*(x_{n_m}) > \frac{\theta}{2}$, and all generators of $z_m^*$ are strictly successive to $h_{m-1}$ with $s(\beta_{2, m}^*)> 2^{h_{m-1}}$ which yields that 
the family $\{ \beta_{j, m}^* \}_{j=2}^{r_m}$ extends the generators associated to $\{ z_k^*\}_{k < m} $ to a vfg family.

\textbf{ Case 2:}
Assume Case 2 holds. Then from Lemma \ref{L9} there exists a generator $\gamma^* $ such that:
\begin{enumerate}
\item $ \min supp(\gamma^*) >2^{h_{m- 1} }$
\item If $ \alpha_{1, m}^* = \beta_{1, m}^*|_I  \quad \text{then} \quad \gamma|_I = \alpha_{1, m}^*$
\end{enumerate}
It is clear that $\gamma^*$ the generator of $\alpha_{1, m}^*$ extends the generators associated to $\{ z_k^*\}_{k < m} $
to  a vfg family.

In both cases, we produce $z_m^* \in G$ satisfying $z_m^*(x_{n_m}) > \frac{\theta}{2}$ whose generators are strictly successive to $h_{m-1}$ and whose sizes are strictly $> 2^{h_{m-1}}$ (by condition ii). This completes the inductive step.

By passing to a subsequence if necessary, we obtain the block sequence $(x_{n_k})_{k=1}^\infty$ and the corresponding sequence $(z_k^*)_{k=1}^\infty$. Let $(b_k)_{k=1}^M$ be any finite sequence of scalars. We set $\lambda_k = \frac{b_k}{\left(\sum_{j=1}^M b_j^2\right)^{1/2}}$ so that $\sum_{k=1}^M \lambda_k^2 = 1$. 

The functional $x^* = \sum_{k=1}^M \lambda_k z_k^*$ belongs to $G$ because its combined generators form a successive vfg family, and the sum of squares of all coefficients is bounded by $\sum_{k=1}^M \lambda_k^2 \le 1$.
Because the supports of $z_k^*$ are disjoint and restricted to $\text{ran}(x_{n_k})$, $z_k^*(x_{n_l}) = 0$ for $k \neq l$. Evaluating $x^*$ on $x = \sum_{k=1}^M b_k x_{n_k}$ yields:
\begin{equation}
x^*(x) = \sum_{k=1}^M \lambda_k b_k z_k^*(x_{n_k}) > \sum_{k=1}^M \frac{b_k^2}{\left(\sum_{j=1}^M b_j^2\right)^{1/2}} \left(\frac{\theta}{2}\right) = \frac{\theta}{2} \left( \sum_{k=1}^M b_k^2 \right)^{1/2}
\end{equation}
Taking the supremum over $x^* \in G$ concludes the proof:
\begin{equation}
\left\| \sum_{k=1}^M b_k x_{n_k} \right\|_\alpha \ge \frac{\theta}{2} \left( \sum_{k=1}^M b_k^2 \right)^{1/2}
\end{equation}
\end{proof}

\subsection{$c_0$  Spreading Models in $X_\alpha$}

We start by recalling the definitions of $c_0$ and $\ell_1$ spreading models.

\begin{definition} 
A normalized sequence $(x_n)_n$ in a Banach space $X$ with a basis $(e_n)_n$ is:
\begin{enumerate}
\item a $c_0$ spreading model with constant $C \ge 1$ if for every $H \subset \N$ with $\# H \le \min H$ we have 
 \[ \left\| \sum_{n\in H} x_n \right\| \le C \]
\item an $\ell_1$ spreading model with constant $C \ge 1$ if for every $H \subset \N$ with $\# H \le \min H$ we have 
 \[ \sum_{n\in H} |\lambda_n| \le C \left\| \sum_{n\in H} \lambda_n x_n \right\| \]
\end{enumerate}
\end{definition}

\begin{remark}\label{R4}
Let us point out, for later use, that if $(x_n)_n$ is a normalized sequence in $X$ which is a $c_0$ spreading model and $(x_n^*)_n$ is a normalized sequence in $X^*$ such that $x_n^*(x_n) \ge \epsilon > 0$, and $x_n^*(x_m) = 0$ for $n \ne m$, then $(x_n^*)_n$ is an $\ell_1$ spreading model.
\end{remark}

\begin{proposition}\label{P4} 
The basis $(e_n)_n$ is a $c_0$ spreading model with a constant $C < 2 $. 
\end{proposition}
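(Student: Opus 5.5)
Fix $H\subset\N$ with $\#H\le\min H$ and set $x=\sum_{n\in H}e_n$. I will show that $|x^*(x)|<2$ for every $x^*\in G$. The coordinates of $x$ are bounded by $1$, so singletons $\pm e_n^*$ give at most $1$. It therefore suffices to handle $x^*=\sum_{i\in F}\lambda_i\alpha_i^*$ with $\alpha_i^*=\beta_i^*|_{I_i}$, where $\{\beta_i^*\}$ is vfg and $\sum\lambda_i^2\le1$. As in Proposition~\ref{prop_segment_sums}, I will bound $\sum_i|\alpha_i^*(x)|$ and then apply Cauchy--Schwarz in the form $|x^*(x)|\le(\sum_i|\alpha_i^*(x)|^2)^{1/2}$. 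It is better still to bound the $\ell_2$ sum directly, since that gives a constant strictly below $2$.

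\textbf{Key estimate.} For a single average $\beta^*$ of size $s$, the restriction $\alpha^*=\beta^*|_I$ has absolute coefficients $\le 1/s$, so
\[
|\alpha^*(x)|\le \frac{\#(\supp\beta^*\cap H)}{s(\beta^*)}\le \min\Bigl\{1,\frac{\#H}{s(\beta^*)}\Bigr\}.
\]
For a comparable average the alternating signs make this even better, but the crude bound suffices. Now let $i_0$ be the first index with $\supp(\alpha_{i_0}^*)\cap H\ne\emptyset$. Its term contributes at most $1$. For every later $i>i_0$ the vfg condition gives $s(\beta_i^*)>2^{h_{i-1}}$, where $h_{i-1}=\max\supp\beta_{i-1}^*$.

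For $i=i_0+1$ I need $h_{i_0}\ge\min H$. This holds because $\supp\beta_{i_0}^*$ meets $H$, so $h_{i_0}\ge\min H\ge\#H$. Hence
\[
|\alpha_{i_0+1}^*(x)|\le \#H/2^{\#H}\le 1/2.
\]
For $i=i_0+r$, the $h$'s increase and satisfy $h_{i-1}\ge \min H+r-1$, so each term is at most $\#H\,2^{-(\#H+r-1)}$.

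\textbf{Summing the squares.} This gives
\[
\sum_i|\alpha_i^*(x)|^2\le 1+\sum_{r\ge1}\Bigl(\frac{\#H}{2^{\#H+r-1}}\Bigr)^2\le 1+\frac{4}{3}\Bigl(\frac{\#H}{2^{\#H}}\Bigr)^2\le 1+\frac13,
\]
and therefore $|x^*(x)|\le\sqrt{4/3}<2$. Taking the supremum over $G$ gives $\|\sum_{n\in H}e_n\|\le\sqrt{4/3}<2$. Since the basis is normalized, this is the claimed $c_0$ spreading model with constant $C<2$.

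\textbf{Main obstacle.} The delicate step is the first later term $i_0+1$. There the growth must be measured against $\#H$ rather than against the tree structure, since incomparable averages may hit many points of $H$. The remedy is to use both the condition $\#H\le\min H$ and the fact that $\beta_{i_0}^*$ already reaches into $H$, which forces $h_{i_0}\ge\min H$. If a sharper constant were needed, the first term $i_0$ could also be refined, but the uniform bound $1$ there already suffices.
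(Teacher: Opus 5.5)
Your proof is correct and is essentially the paper's argument. The first average meeting $H$ contributes at most $1$. Since its generator reaches into $H$, you get $h_{i_0}\ge\min H\ge\#H$, so the vfg condition bounds the $r$-th later term by $\#H\,2^{-(\#H+r-1)}$, and Cauchy--Schwarz gives $\|\sum_{n\in H}e_n\|\le 2/\sqrt{3}$. Your explicit justification of $h_{i_0}\ge\min H$ and your separate treatment of the singletons $\pm e_n^*$ only make precise what the paper leaves implicit.
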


\begin{proof}
Let $H \subset \N$ be a finite subset such that $m = \# H \le \min H$. We consider the vector $x = \sum_{n \in H} e_n$. Note that $\|x\|_\infty = 1$. Since $G$ contains the singletons $\{\pm e_n^*\}_{n \in T}$, we easily see that $\|e_n\|_\alpha = 1$ for all $n$, so the basis is normalized. We will show that $\|x\|_\alpha \leq 2/\sqrt{3}<2$.

Let $x^* = \sum_{i=1}^l \lambda_i \alpha_i^* \in G$ be an arbitrary functional defined by a successive very fast growing (vfg) family of generators $\{\beta_i^*\}_{i=1}^l$, with $\sum_{i=1}^l \lambda_i^2 \le 1$. Assume, without loss of generality, that $\alpha_1^*(x)\neq 0$.

\textbf{Step 1: Bounding the first average.}
Because $\|\alpha_{1}^*\|_1 \le 1$, evaluating on $x$ yields:
\begin{equation}
|\alpha_{1}^*(x)| \le \|\alpha_{1}^*\|_1 \|x\|_\infty \le 1.
\end{equation}

\textbf{Step 2: Bounding the remaining averages.}
For any $i > 1$, the functional $\alpha_i^*$ is evaluated on at most $\# H = m$ nodes. Thus:
\begin{equation}
|\alpha_i^*(x)| \le \frac{m}{s(\beta_i^*)} \leq \frac{m}{2^{h_{i-1}}}.
\end{equation}

\textbf{Conclusion.}
Applying the Cauchy-Schwarz inequality:
\begin{equation}
|x^*(x)|^2 \le \Big(|\alpha_{1}^*(x)|^2 + \sum_{i > 1}|\alpha_i^*(x)|^2\Big) \leq \Big(1 + m^2\sum_{i\geq m}\frac{1}{4^i}\Big) = \frac{3\cdot 4^{m-1}+m^2}{3\cdot4^{m-1}} \leq \frac{4}{3}.
\end{equation}
We conclude $\|x\|_\alpha \le 2/\sqrt{3}$.
\end{proof}

\subsection{Non-embedding of $c_0$ into $X_\alpha$}

We now use the lower $\ell_2$ estimate established in Proposition~\ref{P2} alongside Ramsey's Theorem on the tree structure to prove that $X_\alpha$ contains no isomorphic copy of $c_0$.

\begin{theorem}
The space $c_0$ does not embed into $X_\alpha$.
\end{theorem}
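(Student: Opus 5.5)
Argue by contradiction. Suppose $c_0$ embeds into $X_\alpha$. By the standard gliding hump and perturbation arguments, there is then a normalized block sequence $(x_n)_n$ of the basis $(e_n)_n$ that is $C$-equivalent to the unit vector basis of $c_0$. In particular,
\[
\Big\|\sum_{n\in F}x_n\Big\|_\alpha\le C \qquad\text{for every finite } F\subset\N .
\]
The goal is to produce a subsequence of $(x_n)_n$, or of a suitable normalized block sequence built from it, that admits a lower $\ell_2$ estimate. This contradicts the uniform bound above, since $\|\sum_{n\in F}x_n\|\ge c\sqrt{\#F}\to\infty$.

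\textbf{Case A: $\|x_n\|_\infty\to 0$ along a subsequence.} Proposition~\ref{P2} applies directly to that subsequence and gives the contradiction.

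\textbf{Case B: $\|x_n\|_\infty\ge\delta>0$ for all $n$.} Here Proposition~\ref{P2} is not available. Pick nodes $p_n\in\operatorname{supp}(x_n)$ with $|x_n(p_n)|\ge\delta$. Apply Ramsey's theorem to the pairs $\{p_n,p_m\}$, colored by comparable versus incomparable. Since the tree has finite levels and the $p_n$ go to infinity, we may pass to a subsequence along which one of two things holds: the $p_n$ form a chain (they all lie on one branch $\sigma$), or they form an antichain.

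\emph{Antichain.} Form $y_N=\frac1{\sqrt N}\sum_{n\in F_N}\pm x_n$ over successive blocks $F_N$ with $\#F_N=N$. Test $y_N$ against the incomparable average on $\{p_n:n\in F_N\}$, with signs chosen to match $x_n(p_n)$; this gives $\|y_N\|\gtrsim\delta$ only after rescaling. The cleaner route is to use the $c_0$ upper bound. Set $u_N=\sum_{n\in F_N}\epsilon_n x_n$, so $\|u_N\|\le C$ while $\|u_N\|\ge\delta$ via the average of size $N$. Choosing $N_k$ growing very fast, the functionals $\frac1{N_k}\sum_{n\in F_{N_k}}\epsilon_ne^*_{p_n}$ have sizes satisfying the vfg condition relative to the previous supports. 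By Lemma~\ref{L10} (with $\min s\to\infty$), the normalized vectors $u_{N_k}/\|u_{N_k}\|$ then admit a lower $\ell_2$ estimate, and hence so does $(u_{N_k})_k$. But $\sum_k u_{N_k}$ is a sum of distinct $x_n$'s with signs, so its norm is at most $C$. This is a contradiction.

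\emph{Chain along $\sigma$.} Use Lemma~\ref{lem_branch_projection} to project onto $\sigma$; $P_\sigma x_n$ is still a block sequence with $\|P_\sigma x_n\|\ge\delta$ and bounded sums, so we may work on the branch. Build comparable averages on pairs $n_l\prec m_l$, where $n_l=p_{n}$ and $m_l$ is a node of $\sigma$ lying between the supports of consecutive blocks, where the vector vanishes. Then $\frac1{2k}\sum_l(e^*_{n_l}-e^*_{m_l})$ evaluates $u$ at roughly $\delta/2$, and the same Lemma~\ref{L10} argument applies.

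The main obstacle is Case B. There are two issues: making sure the averages' sizes can be taken to go to infinity (which forces grouping into blocks $u_N$ rather than using the $x_n$ individually), and handling the first generator of each functional so that the concatenated family is vfg. For the latter, Lemma~\ref{L9} and the first/rest splitting in the proof of Lemma~\ref{L10} should handle it.
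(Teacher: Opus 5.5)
Your proposal is correct and follows the paper's proof. You reduce to a normalized block sequence equivalent to the $c_0$ basis, dispose of $\|x_n\|_\infty\to0$ by Proposition~\ref{P2}, and otherwise use Ramsey to get a chain or antichain of heavy nodes, on which you build a vfg family of incomparable averages, or of comparable averages with gap nodes on $\sigma$.

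The paper skips Lemma~\ref{L10} and evaluates $\frac{1}{\sqrt l}\sum_{i\le l}\beta_i^*$ directly on $\sum_{n\in H}x_n$, getting a value $>\theta\sqrt l$. The first-generator issue you flag never arises, because each average is a single hand-built functional whose size you choose. One correction: the paper puts the signs on the $x_n$ only, not on the functional. You must do the same, since averages in $\mathcal{A}_{inc}$ carry no signs, and signing both the vector and the functional cancels the signs.
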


\begin{proof}
Suppose, towards a contradiction, that $c_0$ embeds into $X_\alpha$. Then there exists a normalized sequence $(y_n)_{n=1}^\infty$ in $X_\alpha$ that is equivalent to the standard unit vector basis of $c_0$. Because the $c_0$ basis is weakly null and the basis $(e_n)_{n \in T}$ of $X_\alpha$ is shrinking, the sequence $(y_n)_{n=1}^\infty$ is weakly null. By standard perturbation arguments (e.g., the Bessaga-Pe\l{}czy\'nski selection principle), we may pass to a subsequence and assume without loss of generality that there is a normalized block sequence $(x_n)_{n=1}^\infty$ equivalent to the $c_0$ basis.

Since $(x_n)_{n=1}^\infty$ is equivalent to the $c_0$ basis, there exists a constant $C > 0$ such that for every finite subset $H \subset \mathbb{N}$, we have:
\begin{equation}\label{eq_c0}
\left\| \sum_{n \in H} x_n \right\|_\alpha \le C
\end{equation}

We claim that $\liminf_{n \to \infty} \|x_n\|_\infty > 0$. If this were not true, we could extract a subsequence $(x_{n_k})_{k=1}^\infty$ such that $\|x_{n_k}\|_\infty \to 0$. By Proposition~\ref{P2}, we could extract a further subsequence admitting a lower $\ell_2$ estimate. However, no subsequence of the $c_0$ basis admits a lower $\ell_2$ estimate, which is a contradiction. Thus, there exists $\theta > 0$ such that $\|x_n\|_\infty > \theta$ for all $n$.

By changing the signs of $x_n$ if necessary, we can choose for each $n$ a node $k_n \in \supp(x_n)$ such that $x_n(k_n) > \theta$. Since $(x_n)_{n=1}^\infty$ is a block sequence, the nodes $(k_n)_{n=1}^\infty$ are distinct.

We apply Ramsey's Theorem to the partial order $\preceq$ of the dyadic tree $T$. The infinite set of nodes $\{k_n : n \in \mathbb{N}\}$ must contain an infinite sequence that is either a chain (pairwise comparable elements) or an antichain (pairwise incomparable elements). Passing to a subsequence, we assume $(k_n)_{n=1}^\infty$ is either purely incomparable or purely comparable. Let $l \in \mathbb{N}$ be an arbitrarily large integer.

\textbf{Case 1: The nodes $(k_n)_{n=1}^\infty$ are pairwise incomparable.}
We construct a successive very fast growing family of incomparable averages $\beta_1^*, \dots, \beta_l^*$. 
Set $s_1 = 8$. Choose $s_1$ indices to form a block $H_1$, and define $\beta_1^* = \frac{1}{s_1} \sum_{n \in H_1} e_{k_n}^* \in \mathcal{A}_{inc}$. We have $s(\beta_1^*) = s_1 \ge 8$.
Assuming $\beta_{i-1}^*$ is constructed, let $h_{i-1} = \max \supp(\beta_{i-1}^*)$. Set $s_i = 2^{h_{i-1}} + 1$, select the next $s_i$ indices to form $H_i$, and define $\beta_i^* = \frac{1}{s_i} \sum_{n \in H_i} e_{k_n}^* \in \mathcal{A}_{inc}$. 
This forms a valid successive vfg family. 

Let $H = \bigcup_{i=1}^l H_i$. For each $i$, the evaluation gives:
\[ \beta_i^* \left( \sum_{n \in H} x_n \right) = \frac{1}{s_i} \sum_{n \in H_i} x_n(k_n) > \frac{1}{s_i} (s_i \cdot \theta) = \theta \]
Define $x^* = \frac{1}{\sqrt{l}} \sum_{i=1}^l \beta_i^*$. Since $\sum_{i=1}^l \left(\frac{1}{\sqrt{l}}\right)^2 = 1$, $x^* \in G$. Evaluating on $x = \sum_{n \in H} x_n$, we obtain:
\[ \|x\|_\alpha \ge x^*(x) = \frac{1}{\sqrt{l}} \sum_{i=1}^l \beta_i^*(x) > \frac{1}{\sqrt{l}} (l \cdot \theta) = \theta \sqrt{l} \]

\textbf{Case 2: The nodes $(k_n)_{n=1}^\infty$ are pairwise comparable.}
In this case, the nodes lie on a single branch $\sigma$. Because $(x_n)_n$ is a successive block sequence, we may assume, passing if necessary to subsequence, that for every n there exists a $t_n \in \s$ with  $\max \supp(x_n) < t_n <\min \supp(x_{n+1})$ . Since $t_n \notin \supp(x_m)$ for all $m$, we have $x_m(t_n) = 0$. 
This gives an alternating sequence on the branch $\sigma$: $k_1 \prec t_1 \prec k_2 \prec t_2 \dots$

We construct a successive vfg family of comparable averages $\beta_1^*, \dots, \beta_l^*$ similarly. 
Set $s_1 = 8$. Choose $s_1$ indices to form $H_1$, and define $\beta_1^* = \frac{1}{2s_1} \sum_{n \in H_1} (e_{k_n}^* - e_{t_n}^*) \in \mathcal{A}_c$. 
Assuming $\beta_{i-1}^*$ is constructed, let $h_{i-1} = \max \supp(\beta_{i-1}^*)$. Set $s_i = 2^{h_{i-1}} + 1$, select the next $s_i$ indices to form $H_i$, and define $\beta_i^* = \frac{1}{2s_i} \sum_{n \in H_i} (e_{k_n}^* - e_{t_n}^*) \in \mathcal{A}_c$.

Let $H = \bigcup_{i=1}^l H_i$. For each $i$, the evaluation gives:
\[ \beta_i^* \left( \sum_{n \in H} x_n \right) = \frac{1}{2s_i} \sum_{n \in H_i} (x_n(k_n) - x_n(t_n)) > \frac{1}{2s_i} (s_i \cdot \theta - 0) = \frac{\theta}{2} \]
Define $x^* = \frac{1}{\sqrt{l}} \sum_{i=1}^l \beta_i^* \in G$. Evaluating on $x = \sum_{n \in H} x_n$, we obtain:
\[ \|x\|_\alpha \ge x^*(x) = \frac{1}{\sqrt{l}} \sum_{i=1}^l \beta_i^*(x) > \frac{1}{\sqrt{l}} \left(l \cdot \frac{\theta}{2}\right) = \frac{\theta}{2} \sqrt{l} \]

In both cases, for sufficiently large $l$, the norm $\left\| \sum_{n \in H} x_n \right\|_\alpha$ exceeds the bound $C$ from Equation \ref{eq_c0}. This is a contradiction. Thus, $c_0$ does not embed into $X_\alpha$.
\end{proof}

\subsection{$\ell_2$-Saturation of $X_\alpha$}

With the upper and lower $\ell_2$ estimates established, and having proven that $c_0$ does not embed into $X_\alpha$, we can now prove that $X_\alpha$ is $\ell_2$-saturated. Actually, we will prove a stronger result: the copy of $\ell_2$ can be chosen to be complemented. We first establish a useful functional evaluation lemma.

\begin{lemma}\label{L_comp}
Let $\{x_n^*\}_{n=1}^m \subset G$ be a finite sequence of functionals such that for every sequence of scalars $\{\lambda_n\}_{n=1}^m$ with $\sum_{n=1}^m \lambda_n^2 \le 1$, the functional $\sum_{n=1}^m \lambda_n x_n^*$ belongs to $G$. Then for every $x \in X_\alpha$, we have:
\begin{equation}
\left( \sum_{n=1}^m |x_n^*(x)|^2 \right)^{1/2} \le \|x\|_\alpha
\end{equation}
\end{lemma}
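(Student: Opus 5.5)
The argument is a duality (Cauchy--Schwarz equality) argument. The idea is to choose the coefficients $\lambda_n$ proportional to the values $x_n^*(x)$. The hypothesis puts the resulting combination in $G$, and then the definition of the norm as a supremum over $G$ finishes.

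Fix $x\in X_\alpha$ and set $a_n = x_n^*(x)$ for $1\le n\le m$, and $A=\left(\sum_{n=1}^m a_n^2\right)^{1/2}$. If $A=0$ there is nothing to prove, so assume $A>0$. Define $\lambda_n = a_n/A$, so that $\sum_{n=1}^m \lambda_n^2 = 1$. By hypothesis the functional
\[
y^* = \sum_{n=1}^m \lambda_n x_n^*
\]
belongs to $G$.

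Evaluating gives
\[
y^*(x) = \sum_{n=1}^m \frac{a_n}{A}\, a_n = \frac{A^2}{A} = A .
\]
For $x\in c_{00}(\mathbb{N})$ the definition $\|x\|_\alpha=\sup\{z^*(x): z^*\in G\}$ gives $A = y^*(x)\le \|x\|_\alpha$ directly.

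For general $x\in X_\alpha$, every element of $G$ has norm at most one on $(c_{00},\|\cdot\|_\alpha)$. Hence each $z^*\in G$ extends uniquely by continuity to a functional of norm at most $1$ on $X_\alpha$, and the inequality $z^*(x)\le\|x\|_\alpha$ persists by density. The functionals $x_n^*$ are finitely supported in practice, but this is not needed: the argument above uses only the continuity and linearity of $y^*$. So the same computation applies to every $x\in X_\alpha$.

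There is no real obstacle. The one point that needs care is the density passage when $x\notin c_{00}$: one must apply the choice of $\lambda_n$ to $x$ itself, not to approximants, because the closedness of $G$ under these combinations is assumed only for the fixed family $\{x_n^*\}$. This choice is valid because $y^*$ depends only on the numbers $x_n^*(x)$, which are well defined for the extended functionals.
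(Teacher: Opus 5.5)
Your proof is correct and is the same as the paper's: normalize $(x_n^*(x))_{n=1}^m$ to obtain coefficients with $\sum\lambda_n^2=1$, use the hypothesis to place $\sum\lambda_n x_n^*$ in $G$, and evaluate at $x$ to get $\bigl(\sum_n |x_n^*(x)|^2\bigr)^{1/2}\le\|x\|_\alpha$. Your density remark for $x\notin c_{00}$ fills in a step the paper leaves implicit. The claim that one \emph{must} avoid approximants is too strong, though: the hypothesis allows any admissible coefficients, so applying the inequality to finitely supported approximants and passing to the limit works equally well.
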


\begin{proof}
Let $x \in X_\alpha$. If $x_n^*(x) = 0$ for all $n$, the inequality is trivial. Otherwise, choose $\lambda_n = \frac{x_n^*(x)}{\left( \sum_{j=1}^m |x_j^*(x)|^2 \right)^{1/2}}$. By definition, $\sum_{n=1}^m \lambda_n^2 = 1$. By our hypothesis, the functional $x^* = \sum_{n=1}^m \lambda_n x_n^*$ belongs to $G$.
Evaluating $x^*$ on $x$, we obtain:
\[ x^*(x) = \sum_{n=1}^m \lambda_n x_n^*(x) = \frac{\sum_{n=1}^m |x_n^*(x)|^2}{\left( \sum_{j=1}^m |x_j^*(x)|^2 \right)^{1/2}} = \left( \sum_{n=1}^m |x_n^*(x)|^2 \right)^{1/2} \]
Since $x^* \in G$, we have $x^*(x) \le \|x\|_\alpha$, which completes the proof.
\end{proof}

\begin{theorem}\label{thm_complemented_l2_Xalpha}
Every infinite-dimensional closed subspace of $X_\alpha$ contains a subspace isomorphic to $\ell_2$ which is complemented in $X_\alpha$.
\end{theorem}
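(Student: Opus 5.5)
Given an infinite-dimensional closed subspace $Y$ of $X_\alpha$, the standard gliding hump argument produces a normalized sequence $(y_n)$ in $Y$ and a normalized block sequence $(x_n)$ of $(e_n)$ with $\sum\|y_n-x_n\|$ as small as we wish. If we show that a subsequence of $(x_n)$ is equivalent to the $\ell_2$ basis and spans a complemented subspace, then the perturbation principle gives the same for a subsequence of $(y_n)$ in $Y$. So we reduce to the following claim: every normalized block sequence has a subsequence equivalent to the $\ell_2$ basis whose closed span is complemented in $X_\alpha$.

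\textbf{Case $\|x_n\|_\infty\to 0$ along a subsequence.} Apply Lemma~\ref{L3} with $\epsilon_k=1/2$. This yields a subsequence $(x_{n_k})$ and functionals $x_k^*\in G$ with $\supp x_k^*\subset\ran(x_{n_k})$ and $x_k^*(x_{n_k})>1/2$. By Remark~\ref{R3}, every $\ell_2$-normalized combination of the $x_k^*$ lies in $G$. The upper $\ell_2$ estimate (constant $4$) together with Proposition~\ref{P2} shows that $(x_{n_k})$ is equivalent to the $\ell_2$ basis. For the projection we define
\[
Px=\sum_k \frac{x_k^*(x)}{x_k^*(x_{n_k})}\,x_{n_k}.
\]
Then Lemma~\ref{L_comp} gives $(\sum_k |x_k^*(x)|^2)^{1/2}\le\|x\|_\alpha$. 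Combining this with the upper estimate yields $\|Px\|\le 8\|x\|_\alpha$. Since $x_k^*(x_{n_l})=0$ for $k\ne l$, $P$ is a bounded projection onto $\overline{\operatorname{span}}\{x_{n_k}\}$.

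\textbf{Case $\|x_n\|_\infty>\theta$ for all $n$.} Pick nodes $k_n\in\supp x_n$ with $|x_n(k_n)|>\theta$. As in the proof that $c_0$ does not embed, Ramsey's theorem lets us assume the $k_n$ form either an antichain or a chain. We now build, from sets of these nodes, functionals with large $\min s$ and use Lemma~\ref{L10}. The problem is that a single node gives an average of size $1$. So we pass instead to a normalized block sequence of $\ell_2$-averages:
\[
w_j=\|\textstyle\sum_{n\in H_j}x_n\|^{-1}\sum_{n\in H_j}x_n
\]
with $\#H_j\to\infty$. By the upper estimate, $\|\sum_{H_j}x_n\|\le 4\sqrt{\#H_j}$, so $\|w_j\|_\infty\le \|x_n\|_\infty$-type bounds $\to 0$. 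The earlier case then applies to $(w_j)$, giving $\ell_2$ complemented copies in $\overline{\operatorname{span}}\{x_n\}$. The same applies inside $Y$ after replacing the $y_n$ by the corresponding averages, and this is all the theorem requires.

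For this to work we need $\|\sum_{H_j}x_n\|\gtrsim\theta\sqrt{\#H_j}$, since otherwise $\|w_j\|_\infty$ need not tend to $0$. We obtain this exactly as in the $c_0$ non-embedding proof. There, vfg families of incomparable averages (antichain case), or of comparable averages using intermediate nodes $t_n$ on the branch (chain case), give $\|\sum_{n\in H}x_n\|\ge\frac{\theta}{2}\sqrt{l}$ when $H$ is the union of $l$ suitably growing blocks. We choose $H_j$ of this form with $l_j\to\infty$. Then $\|w_j\|_\infty\le \|\sum_{H_j}x_n\|^{-1}\cdot\max|x_n|_\infty$, which is bounded by $C/\sqrt{l_j}\to 0$ since each $\|x_n\|_\infty\le1$ and the $x_n$ are disjointly supported.

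The main obstacle is this second case. The key realization is that one need not find $\ell_2$ among the $x_n$ themselves (indeed $(e_n)$ is a $c_0$ spreading model). One only needs block vectors of small sup-norm, and the lower growth estimate is what guarantees that such vectors exist. The remaining points to check carefully are the sign normalization and the choice of the $t_n$ in the chain case.
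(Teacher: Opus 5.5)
Your proof is correct, but in the non-flat case it takes a different route from the paper. The paper does not split into cases. It shows directly that $[x_n]$ contains a normalized block sequence with $\|y_n\|_\infty\to 0$, by contradiction: otherwise $\|\cdot\|_\infty$ and $\|\cdot\|_\alpha$ would be equivalent on (a tail of) $[x_n]$, so this subspace would embed into $c_0$ and hence contain $c_0$, contradicting the theorem that $c_0$ does not embed into $X_\alpha$. It then runs exactly your first case (Lemma~\ref{L3}, Remark~\ref{R3}, Proposition~\ref{P2}, Lemma~\ref{L_comp}). You instead open up the proof of the $c_0$ theorem and use only its Ramsey/vfg estimate $\|\sum_{n\in H}x_n\|_\alpha\geq\frac{\theta}{2}\sqrt{l}$. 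This produces explicit flat vectors with $\|w_j\|_\infty\leq 2/(\theta\sqrt{l_j})$. Your version is constructive and quantitative, and it needs neither the $c_0$ theorem as a black box nor the fact that infinite-dimensional subspaces of $c_0$ contain $c_0$. The paper's version is shorter once that theorem is available.

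A few statements need fixing, though none breaks the argument.
\begin{itemize}
\item The reduction you announce at the start is false as stated: by Proposition~\ref{P4}, no subsequence of $(e_n)$ is equivalent to the $\ell_2$ basis, as you yourself observe later.
\item What you actually prove is that every block subspace $[x_n]$ contains a block sequence of $(x_n)$ spanning a complemented $\ell_2$. This transfers to $Y$ cleanly. Set $f_n=g_n\circ P_{\ran(x_n)}$, where $g_n$ is a norming functional for $x_n$. Bimonotonicity gives $\|f_n\|\leq 1$ and $f_n(x_m)=\delta_{nm}$. So $T=I+\sum_n f_n(\cdot)(y_n-x_n)$ is an automorphism of $X_\alpha$ with $Tx_n=y_n$ whenever $\sum_n\|y_n-x_n\|<1$. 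Then $T$ maps complemented copies of $\ell_2$ in $[x_n]$ to complemented copies in $[y_n]\subset Y$.
\item The sentence invoking the upper estimate points the wrong way: an upper bound on $\|\sum_{n\in H_j}x_n\|$ cannot make $\|w_j\|_\infty$ small.
\item The growth $\theta\sqrt{\#H_j}$ is not what the construction delivers, since $\#H_j$ is of tower size in $l_j$ and only the $l_j$ averages contribute. But $\frac{\theta}{2}\sqrt{l_j}\to\infty$ is all you use.
\end{itemize}
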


\begin{proof}
Let $Y$ be an infinite-dimensional closed subspace of $X_\alpha$. Since the basis $(e_n)_n$ of $X_\alpha$ is shrinking, every infinite-dimensional closed subspace contains a normalized weakly null sequence. By the Bessaga-Pe\l{}czy\'nski selection principle, we can pass to a subsequence and assume, via a small perturbation, that $Y$ contains a normalized block sequence $(x_n)_{n=1}^\infty$.

We claim that the subspace spanned by $(x_n)_{n=1}^\infty$ contains a further normalized block sequence $(y_n)_{n=1}^\infty$ such that $\|y_n\|_\infty \to 0$ as $n \to \infty$. 
Suppose, towards a contradiction, that this is false. Then there exists a constant $\theta > 0$ such that for all normalized finite block vectors $y \in [x_n]$, we have $\|y\|_\infty \ge \theta$. Since $\|y\|_\infty \le \|y\|_\alpha = 1$, the norms $\|\cdot\|_\alpha$ and $\|\cdot\|_\infty$ are equivalent on the infinite-dimensional subspace generated by $(x_n)_{n=1}^\infty$. 
This implies that $[x_n]$ embeds isomorphically into $c_0$. Since every infinite-dimensional subspace of $c_0$ contains an isomorphic copy of $c_0$, this would force $c_0$ to embed into $X_\alpha$. But we proved in the previous Theorem that $c_0$ does not embed into $X_\alpha$, which is a contradiction.

Thus, we can extract a normalized block sequence $(y_n)_{n=1}^\infty$ such that $\|y_n\|_\infty \to 0$.

Fix $0 < \epsilon < 1/2$. By Lemma~\ref{L3}, there exists a subsequence $(y_{n_k})_{k=1}^\infty$ and a sequence of functionals $(x_k^*)_{k=1}^\infty \subset G$ such that:
\begin{enumerate}
    \item For every $k \in \mathbb{N}$, $x_k^*(y_{n_k}) > 1 - \epsilon > 1/2$ and $\text{supp}(x_k^*) \subset \text{ran}(y_{n_k})$.
    \item By Remark~\ref{R3}, for every finite sequence of scalars $(\lambda_k)_k$ with $\sum_{k=1}^m \lambda_k^2 \le 1$, we have $\sum_{k=1}^m \lambda_k x_k^* \in G$.
\end{enumerate}

Because $\text{supp}(x_k^*) \subset \text{ran}(y_{n_k})$ and $(y_{n_k})_k$ is a block sequence, the functionals are biorthogonal to the sequence in the sense that $x_k^*(y_{n_l}) = 0$ for $k \neq l$.

As shown in Proposition~\ref{P2} and the upper $\ell_2$ estimate, the sequence $(y_{n_k})_{k=1}^\infty$ is equivalent to the standard unit vector basis of $\ell_2$. Specifically, there exist constants $c, C > 0$ such that for any finite sequence of scalars $(b_k)_{k=1}^m$:
\begin{equation}
c \left( \sum_{k=1}^m b_k^2 \right)^{1/2} \le \left\| \sum_{k=1}^m b_k y_{n_k} \right\|_\alpha \le C \left( \sum_{k=1}^m b_k^2 \right)^{1/2}
\end{equation}

We define $z_k = \frac{1}{x_k^*(y_{n_k})} y_{n_k}$. Since $1 \le \frac{1}{x_k^*(y_{n_k})} < 2$, the sequence $(z_k)_{k=1}^\infty$ is also equivalent to the $\ell_2$ basis, and it spans the same subspace as $(y_{n_k})_{k=1}^\infty$. The biorthogonality relation now reads $x_k^*(z_l) = \delta_{k,l}$.

We define a projection $P: X_\alpha \to [z_k]_{k=1}^\infty$ by:
\begin{equation}
P(x) = \sum_{k=1}^\infty x_k^*(x) z_k
\end{equation}
To see that $P$ is bounded, let $x \in X_\alpha$. Using the upper $\ell_2$ estimate of $(z_k)_{k=1}^\infty$ (with constant $\tilde{C} = 2C$) and the newly established Lemma~\ref{L_comp}:
\begin{equation}
\|P(x)\|_\alpha = \left\| \sum_{k=1}^\infty x_k^*(x) z_k \right\|_\alpha \le \tilde{C} \left( \sum_{k=1}^\infty |x_k^*(x)|^2 \right)^{1/2} \le \tilde{C} \|x\|_\alpha
\end{equation}
Therefore, $P$ is a bounded linear operator. Furthermore, for any $l \in \mathbb{N}$, $P(z_l) = \sum_{k=1}^\infty x_k^*(z_l) z_k = z_l$. Thus, $P$ is a bounded projection onto the closed linear span of $(z_k)_{k=1}^\infty$. 

Since $[z_k]_{k=1}^\infty = [y_{n_k}]_{k=1}^\infty \subset Y$ and is isomorphic to $\ell_2$, we conclude that $Y$ contains a complemented subspace isomorphic to $\ell_2$.
\end{proof}

\subsection{The Main Dichotomy}

To finish the study of the space $X_\alpha$, we present a structural dichotomy for normalized block sequences. We introduce an index that measures whether the dual functionals corresponding to the block sequence admit boundedly small building blocks.

\begin{definition}[The $\alpha$-index]\label{def_alpha_index}
Let $(x_n)_{n=1}^\infty$ be a normalized block sequence in $X_\alpha$. We say that $\alpha((x_n)_n) > 0$ if there exists a real number $\theta > 0$, a subsequence $(x_{n_k})_{k=1}^\infty$, and a sequence of functionals $(x_k^*)_{k=1}^\infty \subset G$ such that for all $k \in \mathbb{N}$:
\begin{enumerate}
    \item $k < \min s(x_k^*)$, and
    \item $x_k^*(x_{n_k}) > \theta$.
\end{enumerate}
Otherwise, we say that $\alpha((x_n)_n) = 0$.
\end{definition}

The vanishing of the $\alpha$-index means that it is impossible to evaluate elements of the block sequence with arbitrarily large index significantly away from zero with functionals whose underlying generators have arbitrarily large sizes. 

\begin{lemma}\label{L_alpha_zero}
Let $(x_n)_{n=1}^\infty$ be a normalized block sequence in $X_\alpha$. If $\alpha((x_n)_n) = 0$, then for every $\epsilon > 0$ there exist integers $N_\epsilon$ and $L_\epsilon$ such that for every $n \ge N_\epsilon$ and every functional $x^* \in G$ satisfying $L_\epsilon \le \min s(x^*)$, we have:
\begin{equation}
|x^*(x_n)| < \epsilon
\end{equation}
\end{lemma}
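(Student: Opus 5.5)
This is a contrapositive argument: we negate the conclusion and build the data that Definition~\ref{def_alpha_index} requires for $\alpha((x_n)_n)>0$.

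Suppose that for some $\epsilon>0$ no pair $N_\epsilon, L_\epsilon$ works. Then for every pair $(N,L)$ there are $n\ge N$ and $x^*\in G$ with $\min s(x^*)\ge L$ (for its fixed representation) and $|x^*(x_n)|\ge\epsilon$. Since $G$ is symmetric and $-x^*$ has the same generators as $x^*$, we may replace $x^*$ by $-x^*$ if needed and assume $x^*(x_n)\ge\epsilon>\epsilon/2$.

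We now choose indices recursively. For $k=1$, apply the negated statement with $N=1$ and $L=2$. This gives $n_1$ and $x_1^*$ with $\min s(x_1^*)\ge 2>1$ and $x_1^*(x_{n_1})>\epsilon/2$. Having chosen $n_{k-1}$, apply it with $N=n_{k-1}+1$ and $L=k+1$. This gives $n_k>n_{k-1}$ and $x_k^*\in G$ with $k<\min s(x_k^*)$ and $x_k^*(x_{n_k})>\epsilon/2$.

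Then $(x_{n_k})_k$ is a subsequence, and together with $\theta=\epsilon/2$ and the functionals $(x_k^*)_k$ it satisfies conditions (1) and (2) of Definition~\ref{def_alpha_index}. Hence $\alpha((x_n)_n)>0$, contradicting the hypothesis.

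The only delicate point is that $\min s(x^*)$ depends on the chosen representation of $x^*$. The negated statement supplies a functional together with a representation for which $\min s(x^*)\ge L$. The definition of the $\alpha$-index only asks for functionals in $G$ with such a representation. So the same representation can be used in both places, and nothing further is needed. There is no real obstacle beyond keeping the strict inequality $k<\min s(x_k^*)$, which the choice $L=k+1$ secures.
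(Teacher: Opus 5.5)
Your proposal is correct and follows essentially the same route as the paper: negate the conclusion, recursively choose $n_k>n_{k-1}$ and $x_k^*\in G$ using $N=n_{k-1}+1$ and $L=k+1$, and take $\theta=\epsilon/2$ to contradict $\alpha((x_n)_n)=0$. Your appeal to the symmetry of $G$ to pass from $|x^*(x_n)|\ge\epsilon$ to $x^*(x_n)\ge\epsilon$ is a small point that the paper's proof leaves implicit.
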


\begin{proof}
Suppose, towards a contradiction, that there exists $\epsilon > 0$ such that for every pair of integers $(N, L)$, there is an index $n \ge N$ and a functional $x^* \in G$ with $\min s(x^*) \ge L$ such that $x^*(x_n) \ge \epsilon$. 
We can inductively construct a subsequence $(x_{n_k})_{k=1}^\infty$ and a sequence of functionals $(x_k^*)_{k=1}^\infty \subset G$. 
For $k=1$, set $N=1, L=2$, and find $n_1 \ge 1$ and $x_1^* \in G$ such that $\min s(x_1^*) \ge 2 > 1$ and $x_1^*(x_{n_1}) \ge \epsilon$. 
Assuming we have chosen $(n_1, x_1^*), \dots, (n_{k-1}, x_{k-1}^*)$, we set $N = n_{k-1} + 1$ and $L = k + 1$. By our assumption, there exist $n_k \ge n_{k-1} + 1$ and $x_k^* \in G$ such that $\min s(x_k^*) \ge k+1 > k$ and $x_k^*(x_{n_k}) \ge \epsilon$. 
This construction yields $\theta = \epsilon / 2 > 0$, a subsequence $(x_{n_k})_{k=1}^\infty$, and $(x_k^*)_{k=1}^\infty \subset G$ satisfying $k < \min s(x_k^*)$ and $x_k^*(x_{n_k}) \geq \epsilon > \theta$. But this implies $\alpha((x_n)_n) > 0$, which contradicts the hypothesis.
\end{proof}

\begin{theorem}\label{Main_Dichotomy}
For every normalized block sequence $(x_n)_{n=1}^\infty$ in $X_\alpha$, one of the following holds:
\begin{enumerate}
    \item There exists a subsequence $(x_{n_k})_{k=1}^\infty$ which is equivalent to the standard unit vector basis of $\ell_2$, and the closed linear span $[x_{n_k}]_{k=1}^\infty$ is complemented in $X_\alpha$.
    \item There exists a subsequence $(x_{n_k})_{k=1}^\infty$ which is a $c_0$ spreading model.
\end{enumerate}
\end{theorem}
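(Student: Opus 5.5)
The proof splits according to the $\alpha$-index of Definition~\ref{def_alpha_index}.

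\textbf{Case $\alpha((x_n)_n)>0$.} Here we aim for alternative (1). By definition there are $\theta>0$, a subsequence $(x_{n_k})_k$ and functionals $(x_k^*)_k\subset G$ with $x_k^*(x_{n_k})>\theta$ and $\min s(x_k^*)>k\to\infty$.

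\begin{enumerate}
\item Lemma~\ref{L10} applies directly and yields a further subsequence with a lower $\ell_2$ estimate with constant $\theta/2$. Combined with the upper $\ell_2$ estimate (constant $4$), this subsequence is equivalent to the $\ell_2$ basis.
\item For complementation we reuse the functionals $z_k^*$ built in the proof of Lemma~\ref{L10}. They satisfy $\supp(z_k^*)\subset\ran(x_{n_k})$ and $z_k^*(x_{n_k})>\theta/2$. Moreover, the union of their generators is a successive vfg family, so every combination $\sum\lambda_k z_k^*$ with $\sum\lambda_k^2\le1$ lies in $G$.
\item Lemma~\ref{L_comp} therefore gives $(\sum_k|z_k^*(x)|^2)^{1/2}\le\|x\|_\alpha$.
\item Set $w_k=x_{n_k}/z_k^*(x_{n_k})$, so that $\|w_k\|\le 2/\theta$ and $(w_k)_k$ is still equivalent to the $\ell_2$ basis. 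Exactly as in Theorem~\ref{thm_complemented_l2_Xalpha}, the map $P(x)=\sum_k z_k^*(x)w_k$ is a bounded projection onto $[x_{n_k}]_k$.
\end{enumerate}

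\textbf{Case $\alpha((x_n)_n)=0$.} Here we aim for alternative (2), and this is the main obstacle.

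\begin{enumerate}
\item Choose a sequence $\epsilon_j\downarrow0$ with $\sum_j\epsilon_j$ small. Lemma~\ref{L_alpha_zero} gives thresholds $N_j=N_{\epsilon_j}$ and $L_j=L_{\epsilon_j}$.
\item Extract the subsequence $(x_{n_k})_k$ inductively. The choice of $n_k$ must make $n_k\ge N_j$ for all $j\le k$. It must also make $\min\ran(x_{n_k})$ so large that $2^{\max\ran(x_{n_{k-1}})}\ge L_k$.
\item Fix $H$ with $\#H\le\min H$ and put $x=\sum_{k\in H}x_{n_k}$. Take any $x^*=\sum_{i}\lambda_i\alpha_i^*\in G$.
\item For each $k\in H$, let $i_k$ be the first index with $\supp\alpha_i^*\cap\ran(x_{n_k})\neq\emptyset$. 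Split $x^*(x_{n_k})$ into three parts:
\begin{enumerate}
\item[(a)] the term $\lambda_{i_k}\alpha_{i_k}^*(x_{n_k})$;
\item[(b)] the terms with $i>i_k$ whose generators meet only $\ran(x_{n_k})$ and later blocks;
\item[(c)] nothing else, since generators starting before $\ran(x_{n_k})$ are exactly those covered by (a).
\end{enumerate}
\item The terms in (b) form a functional in $G$ whose generators follow a generator that meets $\ran(x_{n_{k'}})$ for some $k'\ge k$. By the vfg condition and the choice in step 2, their sizes exceed $L_k$. Hence Lemma~\ref{L_alpha_zero} bounds their contribution on $x_{n_k}$ by $\epsilon_k$ (after restricting to $\ran(x_{n_k})$, which stays in $G$).
\item The terms in (a) sum to $\sum_{k\in H}|\lambda_{i_k}\alpha_{i_k}^*(x_{n_k})|$. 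The delicate point is to control this sum, since one generator $\alpha_i^*$ may be the first to meet several blocks.
\begin{enumerate}
\item[(i)] If the same $i$ serves as $i_k$ for several $k$, then $|\alpha_i^*(\sum_k x_{n_k})|$ is bounded by the norm of $\sum_k x_{n_k}$ restricted to the blocks met by $\alpha_i^*$. I would estimate that restriction by $\|\alpha_i^*\|_1\cdot\max_k\|x_{n_k}\|_\infty\le1$.
\item[(ii)] Distinct indices $i$ contribute according to $|\lambda_i|$. If $\alpha_i^*$ first meets block $k$, then all later generators meeting later blocks have large size, so each block is hit ``first'' essentially once.
\end{enumerate}
\item With the estimates above I would first try to bound $|x^*(x)|$ by $\sum_{i}|\lambda_i|\,|\alpha_i^*(x)|$ restricted to first hits, plus $\sum_k\epsilon_k$. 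Here I expect trouble, since Cauchy–Schwarz only gives $\sqrt{\#H}$. The fix I would attempt is to pass to a subsequence with $\|x_{n_k}\|_\infty$ converging and use a spreading-model argument. Say $\|x_{n_k}\|_\infty\to 0$ fails. Then averages of bounded size must norm $x_{n_k}$, and since the generators are successive with sizes growing, only $\#\{i:s(\beta_i^*)<L\}$ generators, a number bounded independently of $H$, can act with weight. That gives a uniform constant. The case $\|x_{n_k}\|_\infty\to0$ is excluded, because Lemma~\ref{L3} would produce norming functionals with large $\min s$, contradicting $\alpha=0$.
\end{enumerate}

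This gives $\|\sum_{k\in H}x_{n_k}\|\le C$, i.e.\ the subsequence is a $c_0$ spreading model.
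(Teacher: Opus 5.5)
Your treatment of the case $\alpha((x_n)_n)>0$ is the paper's argument: Lemma~\ref{L10}, the functionals $z_k^*$, Lemma~\ref{L_comp}, and the projection. It is fine.

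The case $\alpha((x_n)_n)=0$, however, is not proved. Your per-block splitting does control the terms (b) by $\sum_k\epsilon_k$, once step 2 is corrected to require $2^{\min\ran(x_{n_k})}\ge L_k$. (As written, that requirement involves only $x_{n_{k-1}}$, so it cannot be arranged by choosing $n_k$.) The first-hit terms (a) coming from generators other than the very first one are never bounded. A single generator of large size can be the first hit of many blocks. It contributes less than $\epsilon$ on each block, but up to $\#H\cdot\epsilon$ in total. The computation in the proof that $c_0$ does not embed shows that such contributions really can add up to $\sqrt{l}$.

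Your proposed fix does not address this. For a fixed threshold $L$ the number of generators of size $<L$ is indeed bounded. But the remaining generators, combined into one functional of $G$, are only bounded by $\#H\cdot\epsilon(L)$ on $x$. If instead $L$ depends on $H$, the count is no longer uniform. Note also that you never actually use $\#H\le\min H$. Any argument that avoids it must fail: the basis $(e_n)_n$ has $\alpha$-index $0$, yet no subsequence of it has uniformly bounded sums over all finite sets, since that would give a copy of $c_0$. The observation that $\|x_{n_k}\|_\infty\not\to0$ is correct but plays no role.

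The missing idea is that the positions of the blocks, together with $\#H\le\min H$, force all but one generator to be large, with a threshold tied to $\#H$.
\begin{enumerate}
\item Take $\epsilon_k=1/k^2$. Summability is not enough; you need $k\epsilon_k\to0$.
\item Choose $n_k\ge\max_{j\le k}N_{\epsilon_j}$ and $\min\supp(x_{n_k})>d_k$, where $2^{d_k}>L_{\epsilon_k}$.
\item Fix $H$ with $m=\#H\le\min H$, put $x=\sum_{k\in H}x_{n_k}$, and discard the generators of $x^*$ whose restrictions do not meet $\supp x$.
\item The first remaining generator gives $|\lambda_1\alpha_1^*(x)|\le\|\alpha_1^*\|_1\|x\|_\infty\le1$.
\item Since $\max\supp(\beta_1^*)\ge\min\supp x\ge\min\supp(x_{n_m})>d_m$, the vfg condition gives $s(\beta_i^*)>2^{d_m}>L_{\epsilon_m}$ for every $i\ge2$.
\item So $\sum_{i\ge2}\lambda_i\alpha_i^*$ is a single element of $G$, and Lemma~\ref{L_alpha_zero} applies to it on each of the $m$ blocks, all of which have index at least $N_{\epsilon_m}$.
\item Hence $|x^*(x)|\le1+m\epsilon_m\le2$.
\end{enumerate}
In your terms, the (a) terms after the first group and all the (b) terms are absorbed into one tail functional. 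The multiplicity $m$ is then paid for by $\epsilon_m$, which is possible only because $\min H\ge m$.
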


\begin{proof}
Let $(x_n)_{n=1}^\infty$ be a normalized block sequence. We consider the two mutually exclusive possibilities for its $\alpha$-index.

\textbf{Case 1: $\alpha((x_n)_n) > 0$}
By definition, there exist $\theta > 0$, a subsequence (which we will relabel as $(x_n)_{n=1}^\infty$ for simplicity), and $(x_n^*)_{n=1}^\infty \subset G$ such that $x_n^*(x_n) > \theta$ and $n < \min s(x_n^*)$ for all $n$. 
Because $n < \min s(x_n^*)$, we have $\min s(x_n^*) \to \infty$ as $n \to \infty$. 
This places us exactly in the hypothesis framework of Lemma~\ref{L10}. By Lemma~\ref{L10}, there exists a further subsequence $(x_{n_k})_{k=1}^\infty$ admitting a lower $\ell_2$ estimate with constant $\theta/2$, constructed via a biorthogonal sequence of functionals $(z_k^*)_{k=1}^\infty \subset G$ whose generators form a successive vfg family.
By the upper $\ell_2$ estimate (Proposition 3.3), $(x_{n_k})_{k=1}^\infty$ is therefore equivalent to the standard basis of $\ell_2$. Moreover, the successive vfg structure of the generators for $(z_k^*)_{k=1}^\infty$ guarantees that linear combinations satisfy the hypotheses of Lemma~\ref{L_comp}. Consequently, the projection $P(x) = \sum_{k=1}^\infty \frac{z_k^*(x)}{z_k^*(x_{n_k})} x_{n_k}$ is bounded, and the generated space $[x_{n_k}]_{k=1}^\infty$ is complemented in $X_\alpha$.

\textbf{Case 2: $\alpha((x_n)_n) = 0$}
We will extract a subsequence that is a $c_0$ spreading model. We inductively construct a sequence of indices $(n_k)_{k=1}^\infty$ such that the resulting block sequence forces the evaluation of any vfg family on it to be small. 
For $k=1$, we choose any $n_1 \ge 1$. 
Assume we have chosen $n_1 < \dots < n_{k-1}$. Let $h_{k-1} = \max \text{ran}(x_{n_{k-1}})$. We define the precision level $\epsilon_k = 1/k^2$. By Lemma~\ref{L_alpha_zero}, there exist parameters $N_{\epsilon_k}$ and $L_{\epsilon_k}$ controlling the dual evaluation.
Since $2^d \to \infty$, there exists an integer $d_k$ such that $2^{d_k} > L_{\epsilon_k}$. We require the next block to be located sufficiently far down the tree. Specifically, we choose $n_k > n_{k-1}\vee N_{\epsilon_k}$ such that:
\begin{equation}
\min \supp(x_{n_k}) > \max \{d_k, h_{k-1}\}.
\end{equation}

We claim that the subsequence $(y_k)_{k=1}^\infty = (x_{n_k})_{k=1}^\infty$ is a $c_0$ spreading model. 
Let $H \subset \mathbb{N}$ be a finite subset with $2\leq m = \#H \le \min H$, and let $x = \sum_{k \in H} y_k$. Let $x^* = \sum_{i=1}^l \lambda_i \alpha_i^* \in G$ be defined by a successive vfg family $\{\beta_i^*\}_{i=1}^l$. Assume, without loss of generality, that, for $1\leq i\le l$, $\alpha_i^*(x)\neq 0$. As in the proof of Proposition~\ref{P4},
\[|\alpha_1^*(x)| \leq 1.\]
For any $i > 1$, the vfg property dictates that
\[s(\beta_i^*) > 2^{\max \supp(\beta_{i-1}^*)} \ge 2^{\min \text{ran}(y_{\min H})} \ge 2^{\min \supp(x_{n_m})}>2^{d_m}>L_{\epsilon_m}.\]
Since, for $k\in H$, $n_k\geq N_{\epsilon_m}$:
\[\big|\sum_{i\geq 2}\lambda_i\alpha_i^*(x)\big| \leq m\varepsilon_m = 1/m.\]
We deduce $|x^*(x)| \leq 1+1/m$, which implies that $(y_k)_{k=1}^\infty$ is a $c_0$ spreading model.
\end{proof}

\section{The Line Space Variant and Quasi-Reflexivity}
Here, we analyze a sequential variant of our space, $X_c$, constructed on $\N$ or by restricting the index set to a single branch and utilizing only comparable averages. We prove that this line space variant is quasi-reflexive of order one, closely mirroring the classical James space.

By restricting the index set strictly to the natural numbers $\N$ (viewed as a single branch without incomparable nodes) and eliminating the incomparable averages, the branching structure collapses. We are left with a sequential space controlled entirely by the variation of its elements, perfectly mirroring the classical James space $J$.

Let $X_c$ be the completion of $c_{00}(\N)$ equipped with the norm $\|\cdot\|_c$ generated exactly as $\|\cdot\|_\alpha$, but using \emph{only} the comparable averages $\mathcal{A}_c$. Since the elimination of $\mathcal{A}_{inc}$ does not affect the uniform boundedness of segment sums or the upper and lower $\ell_2$ estimates for comparable blocks, the basis $(e_n)_{n=1}^\infty$ is shrinking in $X_c$ (Corollary \ref{C2}).

\begin{theorem}
The space $X_c$ is quasi-reflexive of order one; namely, $\dim(X_c^{**} / X_c) = 1$.
\end{theorem}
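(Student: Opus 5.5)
Since the basis $(e_n)_n$ of $X_c$ is shrinking (Corollary \ref{C2} carried over to $X_c$), we identify $X_c^{**}$ with the space of scalar sequences $x=(x(n))_n$ for which $\sup_N\|\sum_{n\le N}x(n)e_n\|_c<\infty$, with norm equal to this supremum. Under this identification $X_c$ is the subspace of sequences whose partial sums converge in norm. The plan is to show that every $x^{**}\in X_c^{**}$ has a limit $\ell(x^{**})=\lim_n x(n)$, and that $x^{**}-\ell(x^{**})\,\mathbf{1}\in X_c$. Here $\mathbf{1}=w^*\text{-}\lim_N\sum_{n\le N}e_n$, which lies in $X_c^{**}$ by Proposition \ref{prop_segment_sums}: in $\N$ every interval $[1,N]$ is a segment, so these partial sums are bounded by $2$. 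Then $X_c^{**}=X_c\oplus\mathbb{R}\mathbf{1}$. It remains to check that $\mathbf{1}\notin X_c$, which holds because its coordinates do not tend to $0$ while $\|e_n^*\|\le 1$ and elements of $X_c$ have null coordinates, being norm limits of finitely supported vectors. Together these give $\dim(X_c^{**}/X_c)=1$.

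\textbf{Step 1: existence of the limit.} Let $x\in X_c^{**}$ with $\sup_N\|P_Nx\|_c\le M$. Suppose $(x(n))_n$ is not Cauchy. Then there are $\delta>0$ and pairs $n_1<m_1<n_2<m_2<\dots$ with $x(n_l)-x(m_l)>\delta$. Since $\preceq$ is the usual order, any such sequence is a chain. As in Case 2 of the non-embedding-of-$c_0$ theorem, we group consecutive pairs into blocks of sizes $s_i=2^{h_{i-1}}+1$ to form a vfg family $\beta_1^*,\dots,\beta_l^*$ of comparable averages with $\beta_i^*(P_Nx)>\delta/2$. The functional $l^{-1/2}\sum_{i\le l}\beta_i^*\in G$ then gives $\|P_Nx\|_c>\frac{\delta}{2}\sqrt l$, which contradicts the bound $M$ once $l$ is large. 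So $\ell=\lim_n x(n)$ exists.

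\textbf{Step 2: the key step.} Set $y=x-\ell\mathbf{1}\in X_c^{**}$, so that $y(n)\to0$. We must show that the tails $\|\sum_{N<n\le N'}y(n)e_n\|_c\to 0$. Suppose not. Then, since $\sup_N\|P_Ny\|$ is bounded, we can extract successive blocks $z_k=P_{(p_k,q_k]}y$ with $\|z_k\|_c\ge\delta$ and $\|z_k\|_\infty\to0$, the latter because $y(n)\to0$. Normalize them. By Proposition \ref{P2}, a subsequence admits a lower $\ell_2$ estimate, and Remark \ref{R3} together with Lemma \ref{L3} supplies norming functionals $x_k^*$ whose generators form a single vfg family. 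Since the $z_k$ are pieces of the same vector $y$ and the $x_k^*$ are supported in $\operatorname{ran}(z_k)$, the functional $l^{-1/2}\sum_{k\le l}x_k^*\in G$ satisfies
\[\Big\|P_{q_l}y\Big\|_c\ \ge\ l^{-1/2}\sum_{k\le l}x_k^*(z_k)\ \ge\ (1-\epsilon)\,\delta\,\sqrt l,\]
which is unbounded in $l$. This contradicts $y\in X_c^{**}$. Hence the partial sums of $y$ are norm Cauchy, and $y\in X_c$.

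\textbf{Main obstacle.} The expected difficulty is the adaptation of Lemma \ref{L3} in Step 2. There the first generator of each norming functional was discarded using $\|z_k\|_\infty\to0$, and the remaining generators were glued into one vfg family. This should carry over verbatim, since only comparable averages are involved. One must also check the normalization of the $z_k$ against $\delta$, and that restricting the norming functionals to $\operatorname{ran}(z_k)$ keeps them in $G$ (closure under interval projections). Finally, the identification of $X_c^{**}$ with bounded-partial-sum sequences is standard for shrinking bases; this also makes $\ell$ a $w^*$-continuous-free bounded functional with $\ker\ell= X_c$.
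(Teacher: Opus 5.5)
Your proposal is correct and follows essentially the same route as the paper: the limit of the coordinates is forced by a vfg family of comparable averages giving growth $\frac{\delta}{2}\sqrt{l}$, $\mathbf{1}\in X_c^{**}$ follows from the segment-sum bound, and $x^{**}-\ell\,\mathbf{1}\in X_c$ is obtained by gluing the functionals of Lemma~\ref{L3} into $l^{-1/2}\sum_{k\le l}x_k^*\in G$. The only detail to tidy is in Step 1: non-Cauchyness only gives $|x(n_l)-x(m_l)|>\delta$, so you need a consistent sign, which you get either from $\limsup>\liminf$ of the bounded coordinate sequence (as the paper does) or from the symmetry of $G$.
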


\begin{proof}
\textbf{Step 1: Identification of the Bidual.}
Since the basis $(e_n)_{n=1}^\infty$ is shrinking, $X_c^*$ is canonically isomorphic to the closed linear span of the biorthogonal functionals $(e_n^*)_{n=1}^\infty$. We can therefore identify the bidual $X_c^{**}$ with the space of all scalar sequences $x^{**} = (a_n)_{n=1}^\infty$ whose partial sums are uniformly bounded in $X_c$, i.e.,
$$\sup_m \left\| \sum_{n=1}^m a_n e_n \right\|_c \le M < \infty$$
 The action of $x^{**}$ on any $x^* \in c_{00}(\N)^*$ is given by $x^{**}(x^*) = \lim_{m \to \infty} x^*\left(\sum_{n=1}^m a_n e_n\right)$.

\textbf{Step 2: Coordinate Convergence in $X_c^{**}$.}
We claim that for every $x^{**} = (a_n) \in X_c^{**}$, the limit $c = \lim_{n \to \infty} a_n$ exists. 
Suppose, towards a contradiction, that $(a_n)$ does not converge. Then $\limsup_{n \to \infty} a_n > \liminf_{n \to \infty} a_n$. Let $R$ and $r$ be real numbers such that $\limsup a_n > R > r > \liminf a_n$, and define $\delta = R - r > 0$. We can recursively select an infinite sequence of indices $n_1 < m_1 < n_2 < m_2 < \dots$ such that $a_{n_i} > R$ and $a_{m_i} < r$. This guarantees that $a_{n_i} - a_{m_i} > \delta$ for all $i$.

Let $l \in \N$ be an arbitrarily large integer. We construct a successive vfg family of comparable averages $\beta_1^*, \dots, \beta_l^* \in \mathcal{A}_c$ using these indices:
\begin{itemize}
    \item Set $s_1 = 8$. Choose the first $s_1/2 = 4$ pairs of indices to form $\beta_1^*$. Evaluating against $x^{**}$ yields:
    \[ \beta_1^*(x^{**}) = \frac{1}{s_1} \sum_{i=1}^4 (a_{n_i} - a_{m_i}) > \frac{4}{8}\delta = \frac{\delta}{2} \]
    \item Assuming $\beta_{j-1}^*$ is constructed, let $h_{j-1} = \max \supp(\beta_{j-1}^*)$. Set $s_j$ to be the smallest even integer strictly greater than $2^{h_{j-1}}$. Form $\beta_j^*$ using the next $s_j/2$ pairs of indices strictly after $h_{j-1}$. Again, $\beta_j^*(x^{**}) > \frac{\delta}{2}$.
\end{itemize}

Define the functional $x^* = \frac{1}{\sqrt{l}} \sum_{j=1}^l \beta_j^*$. Because its generators form a vfg family and the coefficients square-sum to $1$, $x^*$ belongs to the norming set $G$ restricted to $X_c$. Evaluating $x^*$ on $x^{**}$ yields:
\[ x^{**}(x^*) = \frac{1}{\sqrt{l}} \sum_{j=1}^l \beta_j^*(x^{**}) > \frac{1}{\sqrt{l}} \left(l \cdot \frac{\delta}{2}\right) = \frac{\delta}{2} \sqrt{l} \]
Since $x^{**} \in X_c^{**}$, we must have $x^{**}(x^*) \le M$. But $l$ can be arbitrarily large, which is a contradiction. Thus, $c = \lim_{n \to \infty} a_n$ exists.

\textbf{Step 3: The Constant Sequence.}
Let $z^{**} = (1, 1, 1, \dots)$ be the constant sequence of ones. We claim $z^{**} \in X_c^{**}$. 
For any comparable average $\alpha_c^* \in \mathcal{A}_c$, the definition of the functional evaluates identically to zero on $z^{**}$, and for any interval $I$ of $\mathbb{N}$,
\[ \big|\alpha_c^*|_I(z^{**})\big| \leq \frac{1}{2k} = \frac{1}{s(\alpha_c^*)}.\]
Therefore, for any functional $x^* \in G$ defined by a linear combination of comparable averages, $|x^*(z^{**})| \leq (1+\sum_{i\geq 1}4^{-i})^{1/2} < 2$. Thus, $z^{**}$ is a well-defined element of $X_c^{**} \setminus X_c$.

\textbf{Step 4: Decomposition and Conclusion.}
For any $x^{**} = (a_n) \in X_c^{**}$, let $c = \lim_{n \to \infty} a_n$. We can decompose $x^{**}$ as:
\[ x^{**} = c z^{**} + y^{**} \]
where $y^{**} = (a_n - c)$. Since $X_c^{**}$ is a vector space, $y^{**} \in X_c^{**}$ with norm bounded by some $M_y$, and by construction, $\lim_{n \to \infty} y^{**}(n) = 0$.

We must show that $y^{**}$ actually belongs to $X_c$. Let $S_m = \sum_{n=1}^m y^{**}(n) e_n$ be the partial sums of $y^{**}$. 
If $(S_m)$ does not converge in norm, it is not Cauchy. Then there exists $\epsilon > 0$ and an increasing sequence of indices $p_1 < q_1 < p_2 < q_2 < \dots$ such that the blocks $u_k = S_{q_k} - S_{p_k}$ satisfy $\|u_k\|_c \ge \epsilon$. Since the coordinates $y^{**}(n) \to 0$, we have $\|u_k\|_\infty \to 0$. 

Apply Lemma~\ref{L3}, whose proof applies unchanged to $X_c$, to the normalized block sequence $(u_k/\|u_k\|_c)_k$, whose $\|\cdot\|_\infty$ norms tend to zero. Passing to a subsequence and relabelling, we obtain functionals $x_k^*\in G$ supported on $(p_k,q_k]$ such that $x_k^*(u_k)>\frac12\|u_k\|_c$ and the combined generators of $(x_k^*)_k$ form a successive vfg family. Decreasing $\epsilon$ if necessary, we therefore have $x_k^*(y^{**})=x_k^*(u_k)\geq\epsilon$ for every $k$.

Let $N \in \N$. Define $x^* = \frac{1}{\sqrt{N}} \sum_{k=1}^N x_k^*$. Since the combined generators form a vfg family and the coefficients square-sum to $1$, $x^* \in G$. Evaluating $x^*$ on $y^{**}$ gives:
\[ x^*(y^{**}) = \frac{1}{\sqrt{N}} \sum_{k=1}^N x_k^*(y^{**}) \ge \frac{1}{\sqrt{N}} (N \epsilon) = \epsilon \sqrt{N} \]
However, since $y^{**} \in X_c^{**}$, we must have $x^*(y^{**}) \le \|y^{**}\|_{X_c^{**}} \le M_y$. This forces $\epsilon \sqrt{N} \le M_y$ for all $N$, which is absurd. 

Therefore, $S_m \to y^{**}$ in norm, which proves $y^{**} \in X_c$. Every $x^{**} \in X_c^{**}$ can be uniquely written as the sum of a vector in $X_c$ and a multiple of $z^{**}$. Thus, $\dim(X_c^{**} / X_c) = 1$.
\end{proof}
As part of the proof of the theorem we have the following which will be used in the next section.
\begin{corollary} \label{C3} For every $x^{**} \in X_c^{**}$ there exists a $c\in \R$ and a $y\in X_c $ such that 
\[ x^{**} = c 1_\N + y\]
\end{corollary}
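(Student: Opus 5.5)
This corollary is essentially a repackaging of Steps 1–4 in the proof of the preceding theorem, so I would extract the argument from there rather than prove anything new. By Step 1, since $(e_n)_n$ is shrinking in $X_c$, I identify an arbitrary $x^{**}\in X_c^{**}$ with the scalar sequence $(a_n)_n$, where $a_n = x^{**}(e_n^*)$ and the partial sums $\sum_{n\le m} a_n e_n$ are uniformly bounded in $\|\cdot\|_c$. Step 2 then shows that $c=\lim_n a_n$ exists. That step uses vfg families of comparable averages built from alternating indices $n_1<m_1<n_2<\cdots$ with $a_{n_i}-a_{m_i}>\delta$. Such families produce functionals in $G$ of norm at most $1$ whose values on $x^{**}$ grow like $\frac{\delta}{2}\sqrt{l}$, which contradicts boundedness.

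Next I take $1_\N$ to be the constant sequence $z^{**}$ from Step 3. It lies in $X_c^{**}$ because every restricted comparable average takes absolute value at most $1/s(\alpha^*)$ on it. The vfg growth of the sizes then gives $|x^*(1_\N)|<2$ for all $x^*\in G$, so the partial sums of $1_\N$ are uniformly bounded.

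Now I set $y^{**}=x^{**}-c1_\N$. This is an element of $X_c^{**}$ whose coordinates tend to $0$. By Step 4, its partial sums $S_m$ are Cauchy in norm. Otherwise there would be blocks $u_k$ with $\|u_k\|_c\ge\epsilon$ and $\|u_k\|_\infty\to0$. Lemma~\ref{L3}, which holds verbatim for $X_c$, would then give functionals $x_k^*$ whose combined generators form a single vfg family. The functionals $\frac{1}{\sqrt N}\sum_{k\le N}x_k^*\in G$ would then yield $x^{**}$-values at least $\epsilon\sqrt N$, which is impossible. Hence $y=\lim_m S_m\in X_c$. Finally, $y$ and $y^{**}$ have the same coordinates, and the coordinate functionals $e_n^*$ span a dense subspace of $X_c^*$ because the basis is shrinking. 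So $y^{**}=y$ as elements of $X_c^{**}$, and therefore $x^{**}=c1_\N+y$.

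The only point that deserves a moment's care is this last identification: equality of coordinates implies equality in the bidual. It follows from density of $\operatorname{span}\{e_n^*\}$ in $X_c^*$ together with the fact that both functionals are bounded. Everything else is taken directly from the theorem's proof.
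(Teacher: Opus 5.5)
Your proposal is correct and follows the paper's route exactly: the paper obtains this corollary directly from Steps 1--4 of the proof that $X_c$ is quasi-reflexive, and your check that $y^{**}=y$ via density of $\operatorname{span}\{e_n^*\}$ in $X_c^*$ makes explicit an identification the paper leaves implicit. (Minor slip: in the Step 4 contradiction the functionals are evaluated on $y^{**}$, not $x^{**}$, and the lower bound is $\frac{\epsilon}{2}\sqrt{N}$ unless you shrink $\epsilon$ first, as the paper does.)
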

\begin{remark}\label{R5} For $\s$ a branch we set $X_\s$ the subspace of $X_\alpha$ generated by the family $\{ e_k\}_
{k\in \s}$.It is not clear if the spaces $X_c, X_\s$ are isomorphic since they have different vfg families of $\alpha_c$ averages. However, it readily follows that $X_\s$ satisfies the same properties as $X_c$. In particular, the following holds.
\end{remark}
\begin{theorem}\label{T2}For every branch $\s \in \Gamma$ the space $X_\s $ is a quasireflexive complemented subspace of $X_\alpha$ with $\dim(X_\s^{**} / X_\s)  = 1$. Moreover for every $x^{**} \in X_\s^{**} \subset X_\alpha^{**}$ there are 
$c\in \R $ and $y\in X_\s$ such that
 \[ x^{**} = c 1_\s + y\]
 \end{theorem}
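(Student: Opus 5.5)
Complementation is immediate from Lemma~\ref{lem_branch_projection}: $P_\sigma$ is a norm-one projection of $X_\alpha$ onto $X_\sigma$. Consequently $X_\sigma^{**}$ is identified isometrically with a (complemented) subspace of $X_\alpha^{**}$ via $P_\sigma^{**}$. Since $(e_k)_{k\in\sigma}$ is a block subsequence of the shrinking basis $(e_n)_n$ (Corollary~\ref{C2}), it is a shrinking basis of $X_\sigma$. I would therefore describe $X_\sigma^{**}$, as in Step~1 of the proof of the quasi-reflexivity theorem for $X_c$, as the space of scalar families $(a_k)_{k\in\sigma}$ with $\sup_m\|\sum_{k\in\sigma,k\le m}a_ke_k\|_\alpha<\infty$. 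The plan is then to repeat Steps~2--4 of that proof with $\sigma$ enumerated increasingly in place of $\N$. The key observation is that every functional used there can be chosen in $G$ with all generators comparable averages supported on $\sigma$. Indeed, a chain $n_1\prec m_1\prec\dots$ of nodes of $\sigma$ is admissible in $\mathcal{A}_c$, and the vfg condition depends only on sizes and maxima of supports, which are computed identically.

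For Step~2 (existence of $c=\lim_{k\in\sigma}a_k$), I would suppose the contrary and take alternating nodes $n_1\prec m_1\prec n_2\prec\cdots$ in $\sigma$ with $a_{n_i}-a_{m_i}>\delta$. I would then build $l$ successive comparable averages of sizes exceeding $2^{h_{j-1}}$ and test $x^{**}$ against $l^{-1/2}\sum_j\beta_j^*$, obtaining $x^{**}(x^*)>\frac{\delta}{2}\sqrt l$, which is unbounded in $l$. For Step~3, I need $1_\sigma\in X_\sigma^{**}\setminus X_\sigma$. The partial sums $\sum_{k\in\sigma,k\le m}e_k$ are sums over finite initial segments, so Proposition~\ref{prop_segment_sums} bounds them by $2$. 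Since $\|e_k\|=1$, $1_\sigma\notin X_\sigma$.

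For Step~4, set $y^{**}=x^{**}-c1_\sigma$, whose coordinates tend to $0$. If its partial sums are not Cauchy, I get blocks $u_k$ supported on $\sigma$ with $\|u_k\|_\alpha\ge\epsilon$ and $\|u_k\|_\infty\to0$. Lemma~\ref{L3} (stated for $X_\alpha$, so directly applicable) yields functionals $x_k^*\in G$ with $\supp x_k^*\subset\ran u_k$, norming $u_k$ up to a factor $1/2$, and with jointly vfg generators. The one point needing care is that $x_k^*$ is supported in an interval of $\N$, while $y^{**}$ is supported on $\sigma$, so $x_k^*(y^{**})=x_k^*(u_k)$ still holds, since $y^{**}$ vanishes off $\sigma$ and $u_k=P_{\ran u_k}y^{**}$. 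The functional $N^{-1/2}\sum_{k\le N}x_k^*$ lies in $G$ by Remark~\ref{R3}, and evaluating it gives $\epsilon\sqrt N/2\le\|y^{**}\|$ for all $N$, a contradiction. Hence $y^{**}=y\in X_\sigma$, which gives both $x^{**}=c1_\sigma+y$ and $\dim X_\sigma^{**}/X_\sigma=1$.

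The main obstacle is the identification step: making sure the pairing of $x^{**}$ with a functional $x^*\in G$ (an element of $X_\alpha^*$, not only of $X_\sigma^*$) is the limit of its values on partial sums along $\sigma$. I would handle this through $P_\sigma^{**}$. Since the basis is shrinking, $x^{**}(x^*)=\lim_m x^*(P_{[1,m]}P_\sigma x^{**})$, so the bound $|x^{**}(x^*)|\le\|x^{**}\|$ holds for the functionals built above. This is what makes the $\sqrt l$ and $\sqrt N$ blow-ups genuine contradictions.
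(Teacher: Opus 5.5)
Your proposal is correct and follows essentially the paper's route. Complementation comes from the norm-one projection of Lemma~\ref{lem_branch_projection}, and the rest is Steps~1--4 of the quasi-reflexivity proof for $X_c$ transplanted to the branch $\sigma$, which is exactly what Remark~\ref{R5} asserts ``readily follows.'' Working with the full norming set $G$ of $X_\alpha$ is the right way to sidestep the possible non-isomorphism of $X_\sigma$ and $X_c$ flagged in Remark~\ref{R5}: you apply Lemma~\ref{L3} directly in $X_\alpha$, and every functional in $G$ is finitely supported, so its pairing with $x^{**}$ is controlled by the bounded partial sums along $\sigma$ (which also makes your ``main obstacle'' essentially automatic).
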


\section{Properties of $X_\alpha^*$}
In this section, we investigate the dual space $X_\alpha^*$. We show that its basis forms an $\ell_1$ spreading model and establish lower $\ell_2$ estimates for normalized block sequences. The main result of this section is the proof that $X_\alpha^*$ does not contain an isomorphic copy of $\ell_1$.

\subsection{$\ell_1$ spreading models in $X_\alpha^*$ } We start with the following, which is a consequence of the property that the basis of $X_\alpha$ is a $c_0$ spreading model.
\begin{proposition}\label{P_ell1_sm_dual}
The basis $(e_n^*)_n$ of $X_\alpha^*$ is an $\ell_1$ spreading model. Moreover, every normalized block sequence $(x_n^*)_n$ in $X_\alpha^*$ with $\inf_n\|x_n^*\|_\infty > \theta > 0$ is an $\ell_1$ spreading model.
\end{proposition}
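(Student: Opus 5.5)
The plan is to apply the mechanism of Remark~\ref{R4} in both parts, using Proposition~\ref{P4} for the basis and the $c_0$ spreading model part of the proof of Theorem~\ref{Main_Dichotomy} for blocks.

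\textbf{The basis.} Take $x_n = e_n$ and $x_n^* = e_n^*$. By Proposition~\ref{P4}, $(e_n)_n$ is a $c_0$ spreading model with constant $C<2$. Since $e_n^*\in G$ and $\|e_n\|_\alpha=1$, each $e_n^*$ has norm one in $X_\alpha^*$, and the $e_n^*$ are biorthogonal to the $e_m$. To make Remark~\ref{R4} explicit, let $H$ satisfy $\#H\le\min H$ and let $(\lambda_n)_{n\in H}$ be scalars. Test $\sum_{n\in H}\lambda_n e_n^*$ against $x=\sum_{n\in H}\operatorname{sgn}(\lambda_n)e_n$. The unconditional-in-signs version of the estimate holds because the proof of Proposition~\ref{P4} only used $\|x\|_\infty\le1$ and $\#\supp x\le m$, so $\|x\|_\alpha\le 2/\sqrt3$. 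Then
\[
\sum_{n\in H}|\lambda_n| = \Big(\sum_{n\in H}\lambda_n e_n^*\Big)(x)\le \frac{2}{\sqrt3}\Big\|\sum_{n\in H}\lambda_n e_n^*\Big\|,
\]
which is the $\ell_1$ spreading model inequality.

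\textbf{Normalized blocks.} Let $(x_n^*)_n$ be a normalized block sequence in $X_\alpha^*$ with $\|x_n^*\|_\infty>\theta$. For each $n$ pick a node $k_n\in\ran(x_n^*)$ with $|x_n^*(e_{k_n})|>\theta$, and put $\epsilon_n=\operatorname{sgn}(x_n^*(e_{k_n}))$. The $k_n$ are strictly increasing because the $x_n^*$ are blocks, and $k_n\ge n$. For $H$ with $\#H\le\min H$, test $\sum_{n\in H}\lambda_n x_n^*$ on $x=\sum_{n\in H}\operatorname{sgn}(\lambda_n)\epsilon_n e_{k_n}$. Since $x_m^*(e_{k_n})=0$ for $m\ne n$, we get
\[
\Big(\sum_{n\in H}\lambda_n x_n^*\Big)(x)=\sum_{n\in H}|\lambda_n|\,|x_n^*(e_{k_n})|>\theta\sum_{n\in H}|\lambda_n|.
\]

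It remains to bound $\|x\|_\alpha$. The vector $x$ has $\|x\|_\infty=1$ and support $\{k_n:n\in H\}$ of cardinality $m=\#H$, with minimum $k_{\min H}\ge\min H\ge m$. The argument of Proposition~\ref{P4} then applies verbatim. The first average contributes at most $1$. Any later average $\beta_i^*$ with $\alpha_i^*(x)\neq0$ has $s(\beta_i^*)>2^{h_{i-1}}$, and $h_{i-1}\ge \min\supp x+(i-2)\ge m+i-2$ by successiveness, which gives contribution at most $m/2^{h_{i-1}}$. Cauchy--Schwarz therefore gives $\|x\|_\alpha\le 2/\sqrt3$. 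Combining,
\[
\sum_{n\in H}|\lambda_n|\le \frac{2}{\sqrt3\,\theta}\Big\|\sum_{n\in H}\lambda_nx_n^*\Big\|,
\]
so the sequence is an $\ell_1$ spreading model with constant $2/(\sqrt3\theta)$. (Normalization of $x_n^*$ is only needed for the definition.)

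\textbf{Main obstacle.} The only delicate point is bounding $\|x\|_\alpha$ for sign-weighted sums of basis vectors on a spread set. Concretely, one must check that the estimate $h_{i-1}\ge m+i-2$ follows from $\min\supp x\ge m$ and successiveness, exactly as in Proposition~\ref{P4}.
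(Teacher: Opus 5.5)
Your proof is correct and follows the same route as the paper, which simply invokes Remark~\ref{R4} together with the $c_0$ spreading model property of $(e_n)_n$ (Proposition~\ref{P4}) and of the chosen subsequence $(e_{k_n})_n$. Your version also makes explicit two points the paper leaves implicit. First, the estimate of Proposition~\ref{P4} holds for sign-weighted sums, which is what produces $\sum|\lambda_n|$ rather than $|\sum\lambda_n|$. Second, $k_n\ge n$ ensures that the admissibility condition $\#H\le\min H$ carries over to the support $\{k_n:n\in H\}$.
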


\begin{proof}
The proof is a direct consequence of Remark \ref{R4} and the $c_0$ spreading model property of the basis $(e_n)_n$ of $X_\alpha$ (Proposition \ref{P4}).

For the basis $(e_n^*)_n$, we have $e_n^*(e_n) = 1 > 0$. Since $(e_n)_n$ is a normalized sequence which is a $c_0$ spreading model, Remark \ref{R4} directly implies that $(e_n^*)_n$ is an $\ell_1$ spreading model.

More generally, let $(x_n^*)_n$ be a normalized block sequence in $X_\alpha^*$ with $\inf_n\|x_n^*\|_\infty > \theta > 0$. We can choose a  sequence $(e_{k_n})_n$ in $X_\alpha$ such that $| x_n^*(e_{k_n})| > \theta$. Because the sequence $(e_{k_n})_n$ of $X_\alpha$ is a $c_0$ spreading model, the result follows from Remark \ref{R4}.

\end{proof}
\begin{remark} A consequence of the above is that there are many absolutely convex combinations of the basis of $X_\alpha^*$ with large norm. In fact,  the critical point in proving that $\ell_1 $ does not embed into $X_\alpha^*$ is to show that normalized block sequences of absolutely convex combinations are not equivalent to the $\ell_1$ basis.
\end{remark}

\subsection{Lower $\ell_2$ estimates}

\begin{lemma}
Let $(x_n^*)_n$ be a normalized block sequence in $X_\alpha^*$. Then $(x_n^*)_n$ admits a lower $\ell_2$ estimate with constant $1/4$.
\end{lemma}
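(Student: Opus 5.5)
The plan is to dualize the upper $\ell_2$ estimate for block sequences in $X_\alpha$ (Proposition 3.3, constant $4$). Let $(x_n^*)_n$ be a normalized block sequence in $X_\alpha^*$ and $(b_n)_{n=1}^m$ scalars. We may assume $\sum b_n^2=1$, and we want to show
\[
\Big\|\sum_{n=1}^m b_n x_n^*\Big\|\ge \tfrac14 .
\]

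\textbf{Step 1: pick localized norming vectors.} Since the basis $(e_n)_n$ is shrinking (Corollary \ref{C2}), the dual basis is a basis of $X_\alpha^*$. Thus each $x_n^*$ lies in the closed span of $\{e_k^*:k\in\ran(x_n^*)\}$; if the block has finite support this holds literally. Fix $\delta>0$. For each $n$, choose a finitely supported $x_n\in X_\alpha$ with $\|x_n\|_\alpha=1$ and $x_n^*(x_n)>1-\delta$. Replacing $x_n$ by $P_{E_n}x_n$, where $E_n=\ran(x_n^*)$, does not change $x_n^*(x_n)$. However, the norm of $P_{E_n}x_n$ must stay controlled. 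This is the one point needing care: interval projections have norm $1$ in $X_\alpha$, because $G$ is closed under restrictions to intervals (property (b)). Hence $\|P_{E_n}x_n\|\le 1$, and after renormalizing we get $\|x_n\|=1$ with $\supp x_n\subset E_n$ and $x_n^*(x_n)>1-\delta$. Since the ranges $E_n$ are successive, $(x_n)_n$ is a normalized block sequence in $X_\alpha$, and $x_k^*(x_n)=0$ for $k\neq n$.

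\textbf{Step 2: test against a combination of the $x_n$.} Set $x=\sum_{n=1}^m b_n x_n$. By Proposition 3.3, $\|x\|_\alpha\le 4$. By biorthogonality,
\[
\Big(\sum_n b_n x_n^*\Big)(x)=\sum_n b_n^2\, x_n^*(x_n)>(1-\delta).
\]
Therefore $\|\sum b_n x_n^*\|\ge (1-\delta)/4$. Letting $\delta\to0$ gives the constant $1/4$. If $\sup_n x_n^*(x_n)$ is not attained exactly, the arbitrary $\delta$ handles it.

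The main obstacle is Step 1, the localization of the norming vectors to $\ran(x_n^*)$ without loss of norm. This rests on the interval projections being contractive, which follows from property (b) of $G$. A secondary point is that for infinitely supported blocks in $X_\alpha^*$ one uses density of finitely supported blocks together with the shrinking property. Everything else is a direct application of the upper $\ell_2$ estimate of Proposition 3.3 with constant $4$.
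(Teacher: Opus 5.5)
Your proof is correct and is the same argument the paper uses: the paper cites the upper $\ell_2$ estimate with constant $4$ for normalized blocks in $X_\alpha$ together with "standard duality for block sequences", and your Steps 1--2 carry out that duality explicitly. Your Step 1 also spells out a point the paper leaves implicit, namely that interval projections are contractive because $G$ is closed under restrictions to intervals; this bimonotonicity is what keeps the constant at exactly $1/4$.
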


\begin{proof}
We established in Proposition 3.3 that every normalized block sequence in $X_\alpha$ admits an upper $\ell_2$ estimate with constant $4$. By standard duality arguments for block basic sequences (see, e.g., \cite[Section 1.b]{LT77} or \cite[Section 3.2]{AK06}), this immediately yields a lower $\ell_2$ estimate with constant $1/4$ for normalized block sequences in the dual space $X_\alpha^*$.
\end{proof}

\begin{lemma}
Let $(x_n^*)_n \subset \operatorname{co}(G)$ be a block sequence in $X_\alpha^*$ such that:
\begin{enumerate}
    \item[(i)] $\|x_n^*\| > \theta > 0$ for all $n$,
    \item[(ii)] For every sequence of scalars $(\lambda_i)_{i=1}^m$ with $\sum_{i=1}^m \lambda_i^2 \le 1$, we have $\sum_{i=1}^m \lambda_i x_i^* \in \operatorname{co}(G)$.
\end{enumerate}
Then $(x_n^*)_n$ is equivalent to the $\ell_2$ basis.
\end{lemma}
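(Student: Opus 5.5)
We need two inequalities: an upper $\ell_2$ estimate, which comes from hypothesis (ii), and a lower $\ell_2$ estimate, which comes from the preceding lemma applied after normalizing. Note that $\|\cdot\|$ on $X_\alpha^*$ is the dual norm. Since $\|x\|_\alpha=\sup\{x^*(x):x^*\in G\}$, the unit ball of $X_\alpha^*$ is the weak$^*$-closed convex hull of $G$. In particular every element of $\operatorname{co}(G)$ has norm at most $1$.

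\textbf{Upper estimate.} Let $(b_i)_{i=1}^m$ be scalars, not all zero, and set $\lambda_i=b_i/(\sum_j b_j^2)^{1/2}$, so that $\sum_i\lambda_i^2=1$. By hypothesis (ii), $\sum_i\lambda_i x_i^*\in\operatorname{co}(G)$, hence $\|\sum_i\lambda_i x_i^*\|\le 1$. Multiplying back gives
\[
\Big\|\sum_{i=1}^m b_i x_i^*\Big\|\le\Big(\sum_{i=1}^m b_i^2\Big)^{1/2}.
\]
The same argument with $m=1$ shows $\|x_n^*\|\le 1$ for every $n$, which we will need below.

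\textbf{Lower estimate.} Put $y_n^*=x_n^*/\|x_n^*\|$. This is a normalized block sequence in $X_\alpha^*$, so the preceding lemma gives
\[
\Big\|\sum_n c_n y_n^*\Big\|\ge\frac14\Big(\sum_n c_n^2\Big)^{1/2}.
\]
Apply this with $c_n=b_n\|x_n^*\|$. Since $\theta<\|x_n^*\|\le1$ by hypothesis (i) and the bound above, we obtain
\[
\Big\|\sum_n b_n x_n^*\Big\|\ge\frac{\theta}{4}\Big(\sum_n b_n^2\Big)^{1/2}.
\]

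Combining the two estimates, $(x_n^*)_n$ is equivalent to the unit vector basis of $\ell_2$, with constants $\theta/4$ and $1$. The argument is routine; the only points needing care are that elements of $\operatorname{co}(G)$ have norm at most $1$, and that the blocks are block vectors with respect to $(e_n^*)$, so the previous lemma applies to their normalizations.
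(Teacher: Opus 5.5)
Your proof is correct and follows essentially the same route as the paper. The upper estimate with constant $1$ comes from hypothesis (ii) together with $\operatorname{co}(G)$ lying in the unit ball, and the lower estimate $\theta/4$ comes from applying the preceding lemma to $x_n^*/\|x_n^*\|$ with coefficients $b_n\|x_n^*\|$ (the bound $\|x_n^*\|\le 1$ you derive is not needed there, only $\|x_n^*\|>\theta$).
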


\begin{proof}
We need to establish both an upper and a lower $\ell_2$ estimate for the sequence $(x_n^*)_n$.

\textbf{Upper $\ell_2$ estimate:}
Let $(a_i)_{i=1}^m$ be an arbitrary sequence of scalars, not all zero. Set $\lambda_i = \frac{a_i}{\left(\sum_{j=1}^m a_j^2\right)^{1/2}}$. Then $\sum_{i=1}^m \lambda_i^2 = 1$. By hypothesis (ii), the functional $y^* = \sum_{i=1}^m \lambda_i x_i^* \in \operatorname{co}(G)$. Since every functional in $G$ has norm at most 1, elements in its convex hull also have norm at most 1. Thus, $\|y^*\|_{X_\alpha^*} \le 1$.
This implies:
\begin{equation}
\left\| \sum_{i=1}^m a_i x_i^* \right\|_{X_\alpha^*} \le \left( \sum_{i=1}^m a_i^2 \right)^{1/2}
\end{equation}

\textbf{Lower $\ell_2$ estimate:}
The sequence $(x_n^*/\|x_n^*\|)_n$ is a normalized block sequence in $X_\alpha^*$. Applying Lemma 5.3 to this sequence and using hypothesis (i), we obtain
\begin{equation}
\left\|\sum_{i=1}^m a_i x_i^*\right\|_{X_\alpha^*}
\geq \frac14\left(\sum_{i=1}^m a_i^2\|x_i^*\|^2\right)^{1/2}
\geq \frac{\theta}{4}\left(\sum_{i=1}^m a_i^2\right)^{1/2}.
\end{equation}

The combination of the upper and lower bounds proves that $(x_n^*)_n$ is equivalent to the $\ell_2$ basis.
\end{proof}

\begin{lemma}\label{L_decomp}
Let $x^* \in X_\alpha^*$ with finite support and $\|x^*\| = 1$. Then for every $\epsilon > 0$, there exists a $y^* \in \operatorname{co}(G)$ satisfying the following:
\begin{enumerate}
    \item[(i)] $\|x^* - y^*\| < \epsilon$
    \item[(ii)] $y^* = z^* + w^*$, where $z^*$ and $w^*$ have the following properties:
    \begin{enumerate}
        \item[(1)] $z^*$ is an absolutely convex combination of the elements of the basis (i.e., $\|z^*\|_{\ell_1} \le 1$).
        \item[(2)] $w^* = \sum_{i \in F} \mu_i w_i^*$ with $\sum_{i \in F} |\mu_i| \le 1$, where $w_i^* = \sum_{j \in F_i} \lambda_{j,i} \alpha_{j,i}^* \in G$, and  $\min \supp(x^*) \le \min\{\min s(w_i^*), \min\operatorname{supp\,gen}(w_i^*)\}$.
    \end{enumerate}
\end{enumerate}
\end{lemma}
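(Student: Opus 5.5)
Let $n_0=\min\supp(x^*)$. Since the unit ball of $X_\alpha^*$ is the weak$^*$-closed absolutely convex hull of $G$, and $x^*$ has finite support, I would first restrict everything to the finite interval $E=\ran(x^*)$. The restriction $P_E^*$ maps $G$ into $G$ (closure under interval projections), and the ball of the finite-dimensional space $P_E^*X_\alpha^*$ is the norm closure of $\operatorname{co}(P_E^*G)$ (symmetric). Hence, given $\epsilon>0$, there is a convex combination $y^*=\sum_{r}\mu_r g_r^*$ with $g_r^*\in G$, $\supp g_r^*\subset E$, $\sum_r\mu_r\le 1$ and $\|x^*-y^*\|<\epsilon$. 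This gives (i). It then remains to decompose each $g_r^*$ into a "singleton part" and a "large generator part", keeping the total weight at most one.

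Fix $g^*=\sum_{i=1}^l\lambda_i\alpha_i^*$ with vfg generators $\{\beta_i^*\}$ and $\alpha_i^*=\beta_i^*|_{I_i}$. We may assume every $\alpha_i^*$ is nonzero on $E$, and we also assume $\alpha_1^*\ne0$. Split off the first term $u^*=\lambda_1\alpha_1^*$. Since $\|\alpha_1^*\|_{\ell_1}\le1$ and $|\lambda_1|\le1$, $u^*$ is an absolutely convex combination of basis functionals. Set $v^*=\sum_{i\ge2}\lambda_i\alpha_i^*\in G$. As in the proof of Lemma~\ref{L3}, successiveness gives, for $i\ge2$,
\[
\min\supp(\beta_i^*)>\max\supp(\beta_1^*)\ge n_0,\qquad s(\beta_i^*)>2^{\max\supp\beta_1^*}\ge 2^{n_0}\ge n_0,
\]
because $\supp\alpha_1^*$ meets $E$, so $\max\supp\beta_1^*\ge n_0$. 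Hence $\min s(v^*)\ge n_0$ and $\min\operatorname{supp\,gen}(v^*)\ge n_0$. If $l=1$, take $v^*=0$.

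The naive decomposition $g^*=u^*+v^*$ has combined weight up to $2$, whereas the statement requires $\|z^*\|_{\ell_1}\le1$ and $\sum|\mu_i|\le1$ separately. This is where I would refine. Write
\[
u^*=|\lambda_1|\cdot(\operatorname{sgn}\lambda_1\,\alpha_1^*),\qquad v^*=c\,\tilde v^*,
\]
where $c=(\sum_{i\ge2}\lambda_i^2)^{1/2}$ and $\tilde v^*=c^{-1}v^*\in G$, using Lemma~\ref{L1}'s normalization. Then $z^*=\sum_r\mu_r u_r^*$ has $\ell_1$ norm at most $\sum_r\mu_r|\lambda_{1,r}|\le1$, and $w^*=\sum_r(\mu_r c_r)\tilde v_r^*$ has coefficient sum $\sum_r\mu_rc_r\le1$. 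Since $|\lambda_{1,r}|,c_r\le1$, each part separately has weight at most one, which is all the statement demands. The generators of $\tilde v_r^*$ are those of $v_r^*$, so the conditions in (2) persist.

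The main obstacle I foresee is the case where $\beta_1^*$ itself already has large size and large minimal support. In that case it could go into $w^*$, but the above split is still valid regardless, so no casework is needed. The genuine delicacy is the density step: I need to know that $\overline{\operatorname{co}}(G)$, restricted to $E$, is the unit ball, and that the restriction preserves the $G$-representations. Both hold because $E$ is an interval and $G$ is symmetric and closed under $P_E^*$.
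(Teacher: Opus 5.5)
Your proposal is correct and follows essentially the paper's argument: approximate $x^*$ within $\epsilon$ by a convex combination of elements of $G$ restricted to $\ran(x^*)$, then put the first generator term of each constituent into $z^*$ and keep the remaining generators (sizes $>2^{\min\supp(x^*)}$, supports beyond $\max\supp(\beta_1^*)\ge\min\supp(x^*)$) in $w^*$. The rescaling $\tilde v^*=c^{-1}v^*$ is harmless but unnecessary, because the plain split already gives $w^*=\sum_r\mu_r v_r^*$ with $\sum_r\mu_r\le 1$, exactly as in the paper; constituents with $v_r^*=0$ (where $c_r=0$ and $\tilde v_r^*$ is undefined) should simply be left out of $w^*$.
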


\begin{proof}
Since the norm of $X_\alpha$ is defined by $G$ and $G$ is symmetric, the unit ball of $X_\alpha^*$ is the weak*-closure of $\operatorname{co}(G)$. Choose a sequence of convex combinations of elements of $G$ converging weak* to $x^*$, and restrict each constituent functional to $\operatorname{ran}(x^*)$. Since $G$ is closed under interval restrictions, the resulting sequence remains in $\operatorname{co}(G)$ and still converges weak* to $x^*$. All its terms lie in a common finite-dimensional subspace, so the convergence is also in norm.
Thus, there exists $y_0^* \in \operatorname{co}(G)$ such that $\|x^* - y_0^*\| < \epsilon$. 

Let $I = \text{ran}(x^*)$. We define $y^* = y_0^*|_I$. By the definition of $G$, it is closed under restriction to intervals, which implies $y^* \in \operatorname{co}(G)$. Since $x^*$ is supported on $I$, we have $x^* = x^*|_I$, and therefore:
\begin{equation}
\|x^* - y^*\| = \|(x^* - y_0^*)|_I\| \le \|x^* - y_0^*\| < \epsilon
\end{equation}

Since $y^* \in \operatorname{co}(G)$ and $\supp(y^*) \subset \text{ran}(x^*)$, we can write $y^* = \sum_{i \in M} \mu_i v_i^*$ with $\mu_i \ge 0$, $\sum_{i \in M} \mu_i = 1$, and $v_i^* \in G$ restricted to $I$. 
Each $v_i^*$ is either a singleton $\pm e_n^*$ or generated by a successive vfg family of averages. Let $M_1$ be the indices where $v_i^*$ is a singleton, and $M_2$ be the indices where $v_i^*$ is generated by a vfg family. 

For $i \in M_2$, we write:
\begin{equation}
v_i^* = \sum_{j \in H_i} \lambda_{j,i} \alpha_{j,i}^*
\end{equation}
where $H_i = \{1, \dots, l_i\}$ and $\sum_{j \in H_i} \lambda_{j,i}^2 \le 1$. 
We separate the first generator from the rest:
\begin{equation}
v_i^* = \lambda_{1,i} \alpha_{1,i}^* + \sum_{j=2}^{l_i} \lambda_{j,i} \alpha_{j,i}^* = \lambda_{1,i} \alpha_{1,i}^* + w_i^*
\end{equation}
Let $F_i = \{2, \dots, l_i\}$. Because $\sum_{j \in F_i} \lambda_{j,i}^2 \le \sum_{j \in H_i} \lambda_{j,i}^2 \le 1$, and the remaining averages form a sub-family of a vfg family, we have $w_i^* \in G$. Let $F = \{i \in M_2 : w_i^* \neq 0\}$.

We now define:
\begin{equation}
z^* = \sum_{i \in M_1} \mu_i v_i^* + \sum_{i \in M_2} \mu_i \lambda_{1,i} \alpha_{1,i}^*
\end{equation}
\begin{equation}
w^* = \sum_{i \in F} \mu_i w_i^*
\end{equation}
Clearly $y^* = z^* + w^*$. Let us verify the properties.

\textbf{1) Property of $z^*$:} The singletons have $\ell_1$-norm 1. The first average $\alpha_{1,i}^*$ of any vfg family is an arithmetic mean of basis elements, so $\|\alpha_{1,i}^*\|_{\ell_1} \le 1$. Because $\sum \lambda_{j,i}^2 \le 1$, we have $|\lambda_{1,i}| \le 1$. The sum of the absolute weights is:
\begin{equation}
\sum_{i \in M_1} \mu_i + \sum_{i \in M_2} \mu_i |\lambda_{1,i}| \le \sum_{i \in M_1} \mu_i + \sum_{i \in M_2} \mu_i = 1
\end{equation}
Thus, $z^*$ is an absolutely convex combination of the elements of the basis.

\textbf{2) Property of $w^*$:} We have $w^* = \sum_{i \in F} \mu_i w_i^*$ with $\sum_{i \in F} \mu_i \le 1$. For $i \in F$, the functional $w_i^*$ begins with the second generator $j=2$. By the vfg property:
\begin{equation}
s(\beta_{2,i}^*) > 2^{\max \supp(\beta_{1,i}^*)}
\end{equation}
Because $v_i^*$ is supported inside $\text{ran}(x^*)$ and $\alpha_{1,i}^* \neq 0$, the first generator must intersect $\text{ran}(x^*)$. This means:
\begin{equation}
\max \supp(\beta_{1,i}^*) \ge \min \supp(\alpha_{1,i}^*) \ge \min \text{ran}(x^*) = \min \supp(x^*)
\end{equation}
Therefore, for any generator $j \ge 2$:
\begin{equation}
s(\beta_{j,i}^*) > 2^{\min \supp(x^*)} > \min \supp(x^*)
\end{equation}
Setting $\min s(w_i^*) = \min_{j \in F_i} s(\beta_{j,i}^*)$, we get $\min s(w_i^*) \ge \min \supp(x^*)$.
Successiveness also gives
\[
\min\operatorname{supp\,gen}(w_i^*)=\min\supp(\beta_{2,i}^*)>\max\supp(\beta_{1,i}^*)\geq\min\supp(x^*).
\]

\end{proof}

\begin{lemma}\label{L_extract_upper_l2}
Let $(x_n^*)_n$ be a normalized block sequence in $X_\alpha^*$. Suppose there exist block sequences $(z_n^*)_n$ and $(w_n^*)_n$ such that $\|x_n^* - (z_n^* + w_n^*)\| < \epsilon_n$, with $\sum_{n=1}^\infty \epsilon_n = \epsilon < 1/2$. Assume further that for every $n \in \N$, the pair $z_n^*, w_n^*$ satisfies the properties of the previous decomposition lemma: $z_n^*$ is an absolutely convex combination of basis elements, and $w_n^* = \sum_{i \in F_n} \mu_{i,n} w_{i,n}^*$ with $\sum_{i \in F_n} |\mu_{i,n}| \le 1$, $w_{i,n}^* \in G$, and $\min \supp(x_n^*) \leq \min\{\min s(w_{i,n}^*),\min\operatorname{supp\,gen}(w_{i,n}^*) \}$. Then there exists a subsequence $(w_{n_k}^*)_k$ which admits an upper $\ell_2$ estimate.
\end{lemma}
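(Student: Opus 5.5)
The plan is to pass to a subsequence along which the generators of the $w_{n_k}^*$ combine into genuinely vfg families. We then bound $\|\sum_k a_k w_{n_k}^*\|$ by testing against a normalized $x\in X_\alpha$ and grouping the evaluations so that Lemma~\ref{L_comp} applies.

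\textbf{Step 1: choosing the subsequence.} First restrict each $w_{i,n}^*$ to $\operatorname{ran}(x_n^*)$; since $G$ is closed under interval projections, this does not harm the hypotheses. We may therefore assume $w_n^*$ is supported in a fixed finite interval $R_n$, and that the $R_n$ are successive. Choose $n_1<n_2<\cdots$ inductively so that
\[
\min\operatorname{supp}(x_{n_k}^*) > \max R_{n_{k-1}} =: h_{k-1}.
\]
Then for every $i\in F_{n_k}$ the generators of $w_{i,n_k}^*$ satisfy two things:
\begin{itemize}
\item[] their sizes exceed $\min s(w_{i,n_k}^*)\ge \min\operatorname{supp}(x_{n_k}^*)$;
\item[] their supports start after $\min\operatorname{supp\,gen}(w_{i,n_k}^*)\ge \min\operatorname{supp}(x_{n_k}^*)$.
\end{itemize}
Because the decomposition lemma only gives the linear bound $s>\min\operatorname{supp}$, we cannot directly get $s>2^{h_{k-1}}$. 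So we select $n_k$ with $\min\operatorname{supp}(x_{n_k}^*)>2^{h_{k-1}}$, which is possible since $\min\operatorname{supp}(x_n^*)\to\infty$. Together with $\min\operatorname{supp\,gen}>h_{k-1}$, this gives the following: for any choice $i_k\in F_{n_k}$, the union over $k$ of the generators of $w_{i_k,n_k}^*$ is a successive vfg family.

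\textbf{Step 2: reduction to single choices.} Fix scalars $(a_k)$ and $x$ with $\|x\|_\alpha\le 1$. Then
\[
\Bigl|\sum_k a_k w_{n_k}^*(x)\Bigr| \le \sum_k |a_k| \sum_{i\in F_{n_k}}|\mu_{i,n_k}|\,|w_{i,n_k}^*(x)| \le \sum_k |a_k|\,|w_{i_k,n_k}^*(x)|,
\]
where $i_k$ maximizes $|w_{i,n_k}^*(x)|$ over $F_{n_k}$; here we use $\sum_i|\mu_{i,n_k}|\le 1$. By Step 1 and Remark~\ref{R3}, combinations $\sum_k\lambda_k w_{i_k,n_k}^*$ with $\sum_k\lambda_k^2\le 1$ lie in $G$. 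Lemma~\ref{L_comp} then gives $\bigl(\sum_k|w_{i_k,n_k}^*(x)|^2\bigr)^{1/2}\le \|x\|_\alpha\le 1$. Cauchy--Schwarz yields
\[
\Bigl\|\sum_k a_k w_{n_k}^*\Bigr\| \le \Bigl(\sum_k a_k^2\Bigr)^{1/2},
\]
an upper $\ell_2$ estimate with constant $1$.

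\textbf{Main obstacle.} The delicate point is Step 1: ensuring that the vfg condition holds across different $k$. The sizes and supports of the generators of $w_{i,n_k}^*$ must beat $2^{\max\operatorname{supp}}$ of the last generator of the previous block. The issue is that generators used at stage $k-1$ may have supports extending beyond $R_{n_{k-1}}$.

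I would handle this in two ways. First, restrict to intervals, so that the actual functionals $\alpha_{j,i}^*$ lie in $R_{n_{k-1}}$. Second, replace the generators by ones whose supports do not extend far, using a Lemma~\ref{L9}-type modification. Failing that, define $h_{k-1}$ as the maximum of $\max\operatorname{supp}$ over all of the finitely many generators used at stage $k-1$. This is finite, and $n_k$ can be chosen after it, so the inductive choice still works.
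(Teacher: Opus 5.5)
Your argument is correct provided Step~1 uses the last of your three options. $h_{k-1}$ must be the maximum of $\max\supp(\beta^*)$ over all generators $\beta^*$ of all $w^*_{i,n_{k-1}}$, $i\in F_{n_{k-1}}$, which is exactly the paper's $h_{n_{k-1}}$. With $h_{k-1}=\max R_{n_{k-1}}$, as first written, the claimed vfg property can fail. Successiveness and the growth condition $s(\beta_l^*)>2^{\max\supp(\beta_{l-1}^*)}$ refer to the generators themselves, and their supports may extend far beyond $\ran(w^*_{n_{k-1}})$.

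Your first two remedies do not address this. Restricting the $\alpha^*_{j,i}$ to intervals leaves the generators unchanged. A Lemma~\ref{L9}-type modification raises $\min\supp$ of a generator but does not lower its $\max\supp$. You should also drop the preliminary restriction of the $w^*_{i,n}$ to $\ran(x_n^*)$, since it may change $w_n^*$, which is the object the conclusion is about. Restricting to $\ran(w_n^*)$ would be harmless, but nothing requires it.

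The real difference from the paper is in Step~2. The paper first arranges $\mu_{i,n}\ge 0$. It then expands $\sum_k\lambda_k w^*_{n_k}$ over the product $F_{n_1}\times\cdots\times F_{n_m}$ as $\sum_{\vec i}\bigl(\prod_k\mu_{i_k,k}\bigr)V^*_{\vec i}$, with each $V^*_{\vec i}=\sum_k\lambda_k w^*_{i_k,k}\in G$. It concludes that every $\ell_2$-normalized combination lies in $\operatorname{co}(G)$.

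You instead fix $x$ in the unit ball of $X_\alpha$ and select, for each $k$, an index $i_k$ maximizing $|w^*_{i,n_k}(x)|$. You then apply Lemma~\ref{L_comp} to the selected family and finish with Cauchy--Schwarz. Both routes rest on the same fact: every selection $(i_k)_k$ gives a family whose generators concatenate into a vfg family. Both give constant $1$.

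Your pointwise dual estimate avoids the multi-index bookkeeping and the sign normalization. The paper's route gives the slightly stronger structural conclusion of membership in $\operatorname{co}(G)$, although only the resulting norm bound is used later.
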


\begin{proof}
By absorbing the signs into the functionals $w_{i,n}^* \in G$ (since $G$ is symmetric) and introducing some potentially cancelling terms, we may assume without loss of generality that $\mu_{i,n} \ge 0$ and $\sum_{i \in F_n} \mu_{i,n} = 1$.

For each $n \in \N$, let $h_n$ be the maximum integer appearing in the support of any generator of any $w_{i,n}^*$ for $i \in F_n$.
Since $(x_n^*)_n$ is a block sequence, $\min \supp(x_n^*) \to \infty$ as $n \to \infty$. We can therefore inductively extract a subsequence $(n_k)_{k=1}^\infty$ such that for all $k > 1$:
\begin{equation}
\min \supp(x_{n_k}^*) > 2^{h_{n_{k-1}}}
\end{equation}

We claim that for any finite sequence of scalars $(\lambda_k)_{k=1}^m$ with $\sum_{k=1}^m \lambda_k^2 \le 1$, the linear combination $\sum_{k=1}^m \lambda_k w_{n_k}^*$ belongs to $\operatorname{co}(G)$.

Expanding the combination yields:
\begin{equation}
\sum_{k=1}^m \lambda_k w_{n_k}^* = \sum_{k=1}^m \lambda_k \left( \sum_{i \in F_{n_k}} \mu_{i, k} w_{i, k}^* \right)
\end{equation}
We can distribute this sum over the Cartesian product of the index sets $\Pi = F_{n_1} \times F_{n_2} \times \dots \times F_{n_m}$. For each multi-index $\vec{i} = (i_1, i_2, \dots, i_m) \in \Pi$, define the functional:
\begin{equation}
V_{\vec{i}}^* = \sum_{k=1}^m \lambda_k w_{i_k, k}^*
\end{equation}
By our subsequence selection, for each $k > 1$, the size of the initial generator of $w_{i_k, k}^*$ satisfies:
\begin{equation}
\min s(w_{i_k, k}^*) \ge \min \supp(x_{n_k}^*) > 2^{h_{n_{k-1}}} \ge 2^{\max \supp(w_{i_{k-1}, k-1}^*)}
\end{equation}
The support condition also gives, for $k>1$,
\[
\min\operatorname{supp\,gen}(w_{i_k,k}^*)\geq\min\supp(x_{n_k}^*)>2^{h_{n_{k-1}}}>h_{n_{k-1}},
\]
so the generators from successive selected terms are successive.
This rapid growth condition guarantees that the combined generators of $w_{i_1, 1}^*, \dots, w_{i_m, m}^*$ form a strictly successive very fast growing (vfg) family of averages. Since $\sum_{k=1}^m \lambda_k^2 \le 1$, the definition of the norming set yields $V_{\vec{i}}^* \in G$ for every $\vec{i} \in \Pi$.

We can now rewrite our linear combination as:
\begin{equation}
\sum_{k=1}^m \lambda_k w_{n_k}^* = \sum_{\vec{i} \in \Pi} \left( \prod_{k=1}^m \mu_{i_k, k} \right) V_{\vec{i}}^*.
\end{equation}
Let $P_{\vec{i}} = \prod_{k=1}^m \mu_{i_k, k} \ge 0$. The sum of these weights is exactly:
\begin{equation}
\sum_{\vec{i} \in \Pi} P_{\vec{i}} = \prod_{k=1}^m \left( \sum_{i \in F_{n_k}} \mu_{i, k} \right) \le 1.
\end{equation}
Thus, $\sum_{k=1}^m \lambda_k w_{n_k}^*$ is a convex combination of elements in $G$ (completed by the zero functional if the sum is strictly less than 1), which means it belongs to $\operatorname{co}(G)$.

Since every functional in $\operatorname{co}(G)$ has norm $\le 1$ in $X_\alpha^*$, we conclude:
\begin{equation}
\left\| \sum_{k=1}^m \lambda_k w_{n_k}^* \right\|_{X_\alpha^*} \le 1
\end{equation}
For an arbitrary finite sequence of scalars $(b_k)_{k=1}^m$, choosing $\lambda_k = b_k / (\sum b_j^2)^{1/2}$ yields the upper $\ell_2$ estimate with constant 1:
\begin{equation}
\left\| \sum_{k=1}^m b_k w_{n_k}^* \right\|_{X_\alpha^*} \le \left( \sum_{k=1}^m b_k^2 \right)^{1/2}
\end{equation}
This completes the proof.
\end{proof}

\subsection{ Non-embedding of  $\ell_1$  into $X_\alpha^*$}
We first show that for a bounded block sequence, convergence to zero on all branches yields that the sequence is weakly null. We then use this to derive that $\ell_1$ does not embed in $X_\alpha^*$.

\begin{lemma}\label{L12}
Let $(x_n^*)_n$ be a successive block sequence in $X_\alpha^*$ such that each $x_n^*$ is an absolute convex combination of the basis (i.e., $\sum |x_n^*(k)| \leq 1$) and, for all branches $\sigma$, $\lim_n1_\sigma(x_n^*) = 0$. Then $(x_n^*)_n$ is weakly null.
\end{lemma}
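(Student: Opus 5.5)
The plan is to test $(x_n^*)_n$ against an arbitrary $x^{**}\in X_\alpha^{**}$ and reduce to the hypothesis on branches. Since the basis $(e_n)_n$ is shrinking (Corollary~\ref{C2}), we identify $x^{**}$ with the coordinate sequence $a=(a_k)_{k\in T}$, $a_k=x^{**}(e_k^*)$, whose partial sums are uniformly bounded in $X_\alpha$. For finitely supported $x_n^*$ this gives $x^{**}(x_n^*)=\sum_k a_k x_n^*(k)$. Because $\sum_k|x_n^*(k)|\le 1$ and $\min\supp(x_n^*)\to\infty$, it suffices to show the following. For every $\epsilon>0$ there are finitely many branches $\sigma_1,\dots,\sigma_r$, scalars $c_1,\dots,c_r$ and $K\in\N$ such that
\[
b=a-\sum_{j=1}^r c_j 1_{\sigma_j}
\]
satisfies $|b_k|\le\epsilon$ for all $k>K$. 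Granting this, we get $|x^{**}(x_n^*)|\le\sum_j|c_j|\,|1_{\sigma_j}(x_n^*)|+\epsilon$ for large $n$. The first term tends to $0$ by hypothesis, so $\limsup_n|x^{**}(x_n^*)|\le\epsilon$ for every $\epsilon>0$.

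To prove the claim, first fix a branch $\sigma$. By Lemma~\ref{lem_branch_projection}, $P_\sigma^{**}x^{**}\in X_\sigma^{**}$, and Theorem~\ref{T2} gives $P_\sigma^{**}x^{**}=c_\sigma 1_\sigma+y_\sigma$ with $y_\sigma\in X_\sigma$. In particular $c_\sigma=\lim_{k\in\sigma}a_k$ exists.

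Next we show that only finitely many branches have $|c_\sigma|>\epsilon/2$. Suppose infinitely many do. We choose distinct branches $\sigma^{(1)},\sigma^{(2)},\dots$ among them and, using property a) of the dyadic tree inductively, pick nodes $k_i\in\sigma^{(i)}$ that are pairwise incomparable, lie arbitrarily far out, and satisfy $|a_{k_i}|>\epsilon/4$. Passing to a subsequence we may take all $a_{k_i}$ of the same sign. We then build a vfg family of incomparable averages $\beta_1^*,\dots,\beta_l^*$ on these nodes exactly as in Case~1 of the proof that $c_0$ does not embed. The functional $l^{-1/2}\sum_i\beta_i^*\in G$ is finitely supported, so pairing it with $x^{**}$ gives a value $>\frac{\epsilon}{4}\sqrt l$. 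This contradicts $\|x^{**}\|<\infty$. Call the finitely many branches with $|c_\sigma|>\epsilon/2$ $\sigma_1,\dots,\sigma_r$, and set $c_j=c_{\sigma_j}$. Then $b$ is again in $X_\alpha^{**}$, since each $1_{\sigma_j}\in X_\alpha^{**}$ by Theorem~\ref{T2}. Along any branch, $b$ has limit $0$ on the $\sigma_j$ and limit of modulus $\le\epsilon/2$ elsewhere.

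It remains to show that only finitely many nodes have $|b_k|>\epsilon$. Suppose $\{k:|b_k|>\epsilon\}$ is infinite. By Ramsey's theorem it contains an infinite antichain or an infinite chain. An antichain yields an unbounded pairing exactly as in the previous paragraph. An infinite chain is contained in a single branch $\sigma$, along which $|b_k|>\epsilon$ infinitely often. But $b$ along $\sigma$ converges to a limit of modulus $\le\epsilon/2$, again by Theorem~\ref{T2} applied to $P_\sigma^{**}b$, which is a contradiction. This proves the claim, with $K$ the largest exceptional node.

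The step I expect to be delicate is justifying the evaluation $x^{**}(\beta^*)=\sum_k a_k\beta^*(k)$ for the finitely supported functionals built in $G$, and making sure the incomparable nodes can be chosen successively with the required vfg sizes. Both follow from the shrinking basis and from choosing nodes beyond $\max\supp(\beta_{i-1}^*)$, but they should be checked carefully.
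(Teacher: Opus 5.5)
Your proof is correct. It rests on the same two ingredients as the paper's: the blow-up of a bidual element on vfg families of incomparable averages carried by an antichain of heavy coordinates, and Theorem~\ref{T2} applied branch by branch. The organization, however, is different.

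The paper fixes $g$ with $|g(y_k^*)|\geq 1$ and shows that the heavy set $F_\epsilon=\{i:|g(e_i^*)|\geq\epsilon\}$ has bounded antichain width. It then uses Dilworth's lemma to cover $F_\epsilon$ by finitely many branches plus a finite set. Only then does it invoke Theorem~\ref{T2} on those finitely many branches, writing $g$ there as $\sum_j\lambda_j1_{\tau_j}$ plus $c_0$ tails.

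You instead apply Theorem~\ref{T2} on every branch first to obtain the limits $c_\sigma$. You show that only finitely many satisfy $|c_\sigma|>\epsilon/2$ and subtract those branch indicators. You then use Ramsey's chain/antichain dichotomy, instead of Dilworth, to see that the remainder $b$ has only finitely many coordinates of modulus $>\epsilon$.

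So you prove only that $b$ is eventually small, which is all that is needed. In exchange you avoid the paper's bookkeeping of the overlaps between the covering branches. The paper's route instead yields a structural description of the heavy set.

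The step that actually needs care is not the one you flagged. Pairing $x^{**}$ with a finitely supported functional is just linearity, and successiveness is automatic because the chosen nodes are distinct integers. The real issue is the selection of pairwise incomparable $k_i\in\sigma^{(i)}$ from infinitely many branches with $|c_{\sigma^{(i)}}|>\epsilon/2$.

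Property a) applied inductively does not give this selection. If the $\sigma^{(i)}$ converge pointwise to $\sigma^{(1)}$, then every node of $\sigma^{(1)}$ lies on all but finitely many $\sigma^{(i)}$. Every later choice is then comparable to $k_1$, however far out $k_1$ is taken.

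The fix is a compactness argument:
\begin{itemize}
\item Using property b), pass to a subsequence of branches converging pointwise to some branch $\tau$, and discard $\tau$ itself if it occurs.
\item Pass to a further subsequence whose splitting heights $h_i$ from $\tau$ (the last height at which $\sigma^{(i)}$ agrees with $\tau$) strictly increase.
\item Any choice of $k_i\in\sigma^{(i)}$ with $|k_i|>h_i$ is then pairwise incomparable. For $i<j$, $k_i$ lies above $\sigma^{(i)}$'s node at height $h_i+1$, while $k_j$ lies above $\tau$'s node at that height.
\item Since $a$ converges along $\sigma^{(i)}$ to $c_{\sigma^{(i)}}$, these nodes can also be taken with $|a_{k_i}|>\epsilon/4$.
\end{itemize}
Alternatively, bound the antichain width of $\{k:|a_k|>\epsilon/4\}$ and apply Dilworth, as the paper does.
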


\begin{proof}
If $(x_n^*)_n$ is not weakly null, after passing to a subsequence $(y_k^*)_k$, there exists a functional $g \in X_\alpha^{**}$ (which we may assume is the weak*-limit of its finite projections) such that $|g(y_k^*)| \geq 1$ for all $k$. 
Fix $0 < \epsilon < 1$, and define the heavy node set $F_\epsilon = \{i \in \N : |g(e_i^*)| \ge \epsilon\}$. 
The evaluation of $g$ outside $F_\epsilon$ is strictly bounded:
\begin{equation}
|g|_{F_\epsilon^c}(y_k^*)| \le \sum_{i \notin F_\epsilon} |y_k^*(i)| \cdot |g(e_i^*)| \le \epsilon \|y_k^*\|_{\ell_1} \leq \epsilon.
\end{equation}

Since $\|g\| < \infty$ in the bidual of a tree space, the set $F_\epsilon$ cannot contain arbitrarily large antichains. (Otherwise we can define a vfg sequence of $\alpha_{inc}$ averages with their support in $F_\epsilon$ and blowing up the norm of $g$). Thus, from Dilworth's Lemma \cite{Dilworth50},  $F_\epsilon$ can be covered by a finite number of branches $\tau_1, \dots, \tau_d$ (up to a finite set of nodes that the supports of $y_k^*$ eventually bypass). 
Adding finitely many branches if necessary, we may enlarge $F_\epsilon$ to $\bigcup_{j=1}^d\tau_j$; the preceding estimate on its complement remains valid. For each $j$, the branch projection $P_{\tau_j}:X_\alpha\to X_{\tau_j}$ is bounded by Lemma~\ref{lem_branch_projection}, so $P_{\tau_j}^{**}g\in X_{\tau_j}^{**}$ and has the same coordinates as $g$ on $\tau_j$. By Remark~\ref{R5} and Theorem~\ref{T2}, it has the form $\lambda_j1_{\tau_j}+u_j$ with $u_j\in X_{\tau_j}$, whose coordinates tend to zero along $\tau_j$. Since distinct branches overlap only in finite initial segments, absorbing the finitely supported overlap corrections into these terms gives
\begin{equation}
g|_{F_\epsilon} = \sum_{j=1}^d \lambda_j 1_{\tau_j} + \sum_{j=1}^d c_j
\end{equation}
where $c_j\in c_0(\tau_j)$, extended by zero outside $\tau_j$.

Evaluating $g$ on $y_k^*$ yields:
\begin{equation}
g(y_k^*) = g|_{F_\epsilon^c}(y_k^*) + \sum_{j=1}^d \lambda_j 1_{\tau_j}(y_k^*) + \sum_{j=1}^d c_j(y_k^*)
\end{equation}
As $k \to \infty$:
\begin{enumerate}
    \item The tails $c_j(y_k^*) \to 0$ because the supports of $y_k^*$ tend to infinity and $y_k^*$ is an absolute convex combination
    \item By assumption, $1_{\tau_j}(y_k^*) \to 0$ for each branch $\tau_j$.
    \item $|g|_{F_\epsilon^c}(y_k^*)| \le \epsilon$.
\end{enumerate}

Taking the limit supremum of both sides gives:
\begin{equation}
\limsup_{k \to \infty} |g(y_k^*)| \le \epsilon
\end{equation}
But $1\leq |g(y_k^*)|$ for all $k$, which implies $1 \le \epsilon$. Since we fixed $\epsilon < 1$, this is a contradiction. Therefore, the original sequence $(x_n^*)_n$ is weakly null.
\end{proof}

\begin{proposition}
\label{on block sigmas determine weak nullness}
Let $(x_n^*)_n$ be a bounded block sequence in $X_\alpha^*$ such that for every branch $\sigma \in \Gamma$, $\lim_{n \to \infty} 1_\sigma(x_n^*) = 0$. Then $(x_n^*)_n$ is a weakly null sequence.
\end{proposition}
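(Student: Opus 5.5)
We reduce the proposition to Lemma~\ref{L12} through the decomposition of Lemma~\ref{L_decomp}. Suppose, towards a contradiction, that $(x_n^*)_n$ is not weakly null. Passing to a subsequence, there are $g\in X_\alpha^{**}$ with $\|g\|\le 1$ and $\delta>0$ such that $|g(x_n^*)|>\delta$ for all $n$. After rescaling we may assume $\|x_n^*\|\le 1$; we may also drop the zero terms and normalize, since the hypothesis $1_\sigma(x_n^*)\to0$ survives multiplication by bounded scalars. By density we may also assume each $x_n^*$ is finitely supported; a perturbation by $\epsilon_n$ with $\sum\epsilon_n$ small changes neither $g(x_n^*)$ nor $1_\sigma(x_n^*)$ appreciably, because $\|1_\sigma\|_{X_\alpha^{**}}\le 2$ by Proposition~\ref{prop_segment_sums}.

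Apply Lemma~\ref{L_decomp} to each $x_n^*$ with $\epsilon_n$ summable. This gives $x_n^*\approx z_n^*+w_n^*$, where $z_n^*$ is an absolutely convex combination of the basis supported in $\operatorname{ran}(x_n^*)$ and $w_n^*=\sum_i\mu_{i,n}w_{i,n}^*$ is the tail part whose generators have sizes at least $\min\supp(x_n^*)$. By Lemma~\ref{L_extract_upper_l2}, a subsequence $(w_{n_k}^*)_k$ admits an upper $\ell_2$ estimate. A bounded sequence with an upper $\ell_2$ estimate is weakly null: if $|g(w_{n_k}^*)|>\delta/2$ along a further subsequence, then testing $g$ on $\frac1{\sqrt N}\sum_{k\le N}\pm w_{n_k}^*$ (with signs chosen to align with $g$) yields $\delta\sqrt N/2\le\|g\|$, which is absurd. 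Hence $g(w_{n_k}^*)\to0$, and therefore $|g(z_{n_k}^*)|>\delta/2$ eventually.

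It remains to check that $(z_{n_k}^*)_k$ satisfies the hypothesis of Lemma~\ref{L12}, namely $1_\sigma(z_{n_k}^*)\to0$ for every branch $\sigma$. We have
\[
1_\sigma(z_{n_k}^*)=1_\sigma(x_{n_k}^*)-1_\sigma(w_{n_k}^*)+O(\epsilon_{n_k}),
\]
so it suffices to show $1_\sigma(w_{i,n}^*)\to0$ uniformly in $i$ as $n\to\infty$. This is the main point. Each $w_{i,n}^*\in G$ has generators $\beta_j^*$ that are successive and whose sizes satisfy $s(\beta_j^*)\ge\min\supp(x_n^*)$, growing very fast. Evaluated on $1_\sigma$, the quantity $\alpha_j^*(1_\sigma)$ is handled as in the proof of Proposition~\ref{prop_segment_sums}:
\begin{itemize}
\item an incomparable average meets $\sigma$ in at most one node, so $|\alpha_j^*(1_\sigma)|\le 1/s(\beta_j^*)$;
\item a comparable average restricted to an interval and evaluated on a branch picks up a consecutive portion of an alternating $\pm$ sum, so again $|\alpha_j^*(1_\sigma)|\le 1/s(\beta_j^*)$.
\end{itemize}
Summing over $j$ gives $|w_{i,n}^*(1_\sigma)|\le\sum_j 1/s(\beta_j^*)\le 2/\min\supp(x_n^*)\to0$, uniformly in $i$. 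Consequently $|1_\sigma(w_n^*)|\le\sum_i\mu_{i,n}|1_\sigma(w_{i,n}^*)|\to0$.

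Lemma~\ref{L12} then shows that $(z_{n_k}^*)_k$ is weakly null, contradicting $|g(z_{n_k}^*)|>\delta/2$. The expected obstacles are the bookkeeping in the perturbations and verifying that the $\ell_1$-mass bound on $z_n^*$ holds with constant $1$ after normalization; if $\|x_n^*\|\le M$ we simply apply Lemma~\ref{L12} to $z_n^*/M$.
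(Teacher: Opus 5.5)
Your proof is correct and follows essentially the same route as the paper: normalize, decompose via Lemma~\ref{L_decomp}, use Lemma~\ref{L_extract_upper_l2} to get a subsequence $(w_{n_k}^*)_k$ with an upper $\ell_2$ estimate (hence weakly null), transfer the branch hypothesis to the $z^*$-parts, and conclude with Lemma~\ref{L12}. The step you call the main point is in fact immediate from what you had already proved: your weak-nullness argument for $(w_{n_k}^*)_k$ works for every element of $X_\alpha^{**}$, in particular for $\sigma^{**}=1_\sigma$, and this is how the paper obtains $1_\sigma(w_k^*)\to 0$. Your direct generator-size estimate is nonetheless valid and gives slightly more, namely convergence that is uniform in $\sigma$.
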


\begin{proof}
To show that the sequence $(x_n^*)_n$ is weakly null, it is sufficient to show that every subsequence admits a further weakly null subsequence. Fix an arbitrary subsequence $(x_{n_m}^*)_m$. We may assume that it is seminormalized, since otherwise it has a subsequence converging to zero in norm. Normalizing and retaining the same notation, we may further assume that $(x_{n_m}^*)_m$ is normalized; this reduction preserves both the branch-limit hypothesis and the desired weak-null conclusion.

Choose positive numbers $(\epsilon_m)_m$ with $\sum_{m=1}^\infty\epsilon_m<1/2$. By Lemma~\ref{L_decomp}, for each $m \in \mathbb{N}$, we can find functionals $z_m^*$ and $w_m^*$ such that $\|x_{n_m}^* - (z_m^* + w_m^*)\| < \epsilon_m$, where $z_m^*$ is an absolutely convex combination of basis elements, and the sequence of pairs $(z_m^*, w_m^*)_m$ satisfies the generator-size condition in Lemma~\ref{L_decomp}(ii)(2).

By Lemma~\ref{L_extract_upper_l2}, there exists a further subsequence (which for notational simplicity we will index by $k$) such that $(w_k^*)_k$ admits an upper $\ell_2$ estimate. Thus, $(w_k^*)_k$ is weakly null. 

Because $(w_k^*)_k$ is weakly null, $1_\sigma(w_k^*) \to 0$ for every branch $\sigma \in \Gamma$.
By our initial hypothesis, $1_\sigma(x_{n_k}^*) \to 0$ for every branch. Since $\|x_{n_k}^* - (z_k^* + w_k^*)\| \to 0$, this implies that also:
\begin{equation}
\lim_{k \to \infty} 1_\sigma(z_k^*) = 0
\end{equation}

Now, from Lemma \ref{L12}, $(z_k^*)_k$ is weakly null. 
Since both $(z_k^*)_k$ and $(w_k^*)_k$ are weakly null, and the error $\|x_{n_k}^* - (z_k^* + w_k^*)\| \to 0$, the sequence $(x_{n_k}^*)_k$ is also weakly null. This completes the proof.
\end{proof}

We start with the following, easily proved, statement.

\begin{lemma}\label{L11} Let $(x_n^*)_n $ be a block sequence of absolutely  convex combinations of the basis in $X_\alpha^*$. Let further  $0 < \epsilon <  1$ and $A= \{\s_i \}_{i\in F} $ a set of branches such that, for $i\in F$, $ \lim_n  |1_{\s_i}(x_n^*) |=\alpha_i > \epsilon$
 Then $\# A< \frac{1}{\epsilon}$
 \end{lemma}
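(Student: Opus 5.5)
The plan is to combine three facts: the $\ell_1$-norm bound $\sum_k |x_n^*(k)|\le 1$ of each $x_n^*$, the finiteness of $F$, and the fact that distinct branches share only a finite initial segment while the supports of the block sequence go to infinity. The goal is to show that, for large $n$, the mass $x_n^*$ places on the different branches $\s_i$ comes from pairwise disjoint sets of nodes. This forces $\sum_{i\in F}|1_{\s_i}(x_n^*)|\le 1$.

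First I reduce to a finite statement. Suppose, towards a contradiction, that $\#A\ge 1/\epsilon$. Pick a finite subfamily $A'=\{\s_i\}_{i\in F'}$ with $\#F'=N\ge 1/\epsilon$; if $F$ is already finite this is just $F$ itself, and we may take $N=\lceil 1/\epsilon\rceil$. For each $i\in F'$ we have $\lim_n|1_{\s_i}(x_n^*)|=\alpha_i>\epsilon$. Since $F'$ is finite, there is $n_0$ such that $|1_{\s_i}(x_n^*)|>\epsilon$ for all $n\ge n_0$ and all $i\in F'$.

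Next I separate the branches. By property a) of the dyadic tree in Section 2 (distinct branches have incomparable final segments), there is $k\in\N$ such that the final segments $\s_{>k,i}$, $i\in F'$, are pairwise incomparable, and in particular pairwise disjoint as sets of nodes. Because $(x_n^*)_n$ is a block sequence, $\min\supp(x_n^*)\to\infty$, so there is $n_1\ge n_0$ with $\min\supp(x_n^*)>k$ for all $n\ge n_1$. For such $n$ we get
\[
1_{\s_i}(x_n^*)=\sum_{p\in\s_{>k,i}}x_n^*(p).
\]

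Finally I sum over $i$. For $n\ge n_1$, disjointness of the final segments gives
\[
N\epsilon<\sum_{i\in F'}|1_{\s_i}(x_n^*)|\le\sum_{i\in F'}\sum_{p\in\s_{>k,i}}|x_n^*(p)|\le\sum_{p}|x_n^*(p)|\le 1.
\]
This yields $N<1/\epsilon$, contradicting $N\ge 1/\epsilon$. Hence every finite subfamily of $A$ has fewer than $1/\epsilon$ elements, so $\#A<1/\epsilon$.

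The only mildly delicate point is passing to a finite subfamily before choosing $k$ and $n_1$. Uniformity over the branches is unavailable when $F$ is infinite, but it is not needed once we argue by contradiction with a finite subfamily. Everything else is routine.
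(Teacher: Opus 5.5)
The paper gives no proof of this lemma (it is introduced only as ``easily proved''), and your argument is correct and is presumably the intended one: restrict to finitely many distinct branches, choose $k$ beyond all their pairwise common initial segments so that the tails $\sigma_{>k,i}$ are pairwise disjoint, and then use $\sum_p |x_n^*(p)|\le 1$ once $\min\supp(x_n^*)>k$. The one point to state explicitly is that the $N$ branches in your finite subfamily $A'$ are chosen pairwise distinct, since that is what makes the tails disjoint.
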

 The next is a variant of a result appeared implicitly in \cite{LS} and also  stated in \cite{AGM}.

\begin{lemma}\label{lem_countable_branch_limits}
Let $(x_n^*)_n$ be a bounded block sequence in $X_\alpha^*$. There exists a subsequence $(x_{n_k}^*)_k$ and an at most countable set of branches $A$ such that:
\begin{enumerate}
    \item For every branch $\sigma \in A$, $\lim_{k \to \infty} |1_\sigma(x_{n_k}^*)| = a_\sigma > 0$.
    \item For every branch $\sigma \notin A$, $\lim_{k \to \infty} 1_\sigma(x_{n_k}^*) = 0$.
\end{enumerate}
\end{lemma}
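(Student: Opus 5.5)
We plan to run a diagonal extraction driven by a finiteness statement at each scale $\epsilon=1/j$, in the spirit of Lindenstrauss--Stegall. First we normalize: since $(x_n^*)_n$ is bounded, say $\|x_n^*\|\le M$, we have $|1_\sigma(x_n^*)|\le \|1_\sigma\|_{X_\alpha^{**}}\,M$. By Proposition~\ref{prop_segment_sums} the partial sums of $1_\sigma$ over initial segments are uniformly bounded, so $\|1_\sigma\|\le 2$. Hence every sequence $(1_\sigma(x_n^*))_n$ lies in $[-2M,2M]$.

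The key finiteness input is an analogue of Lemma~\ref{L11} for bounded (not only absolutely convex) block sequences. Suppose $\sigma_1,\dots,\sigma_d$ are distinct branches with $|1_{\sigma_i}(x_n^*)|>\epsilon$ for all $n$ in an infinite set $L$. By property a) of the dyadic tree, there is $k$ such that the final segments $\sigma_{>k,i}$ are pairwise incomparable. Since the $x_n^*$ are blocks, for large $n\in L$ we have $1_{\sigma_i}(x_n^*)=1_{\sigma_{>k,i}}(x_n^*)$ and these act on disjoint supports. Choosing signs $\varepsilon_i$, the element $\sum_i\varepsilon_i 1_{\sigma_{>k,i}}\in X_\alpha^{**}$ has norm at most a constant $C$ independent of $d$. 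To see this, evaluate on $G$ as in Proposition~\ref{prop_segment_sums}: each comparable average meets only one final segment, and the incomparable averages are handled as in the second lemma of Section~2. Then $d\epsilon<CM$, so $d<CM/\epsilon$. If this uniform bound on signed sums of incomparable final segments turns out to be delicate, we will instead reduce via Lemma~\ref{L_decomp} to the absolutely convex part $z_n^*$ and invoke Lemma~\ref{L11} directly, using Lemma~\ref{L_extract_upper_l2} to discard the $w_n^*$ part, whose branch values tend to $0$ along a subsequence since it is weakly null.

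Next comes the extraction. We fix $\epsilon_j\downarrow 0$. At stage $j$, with current subsequence $L_{j-1}$, we consider the set $B_j$ of branches $\sigma$ with $\limsup_{n\in L_{j-1}}|1_\sigma(x_n^*)|>\epsilon_j$. We claim $B_j$ is finite, of size at most $CM/\epsilon_j$. This does not follow immediately from the finiteness step, since different branches may attain their limsup along different subsequences, and this is the main obstacle. We intend to handle it with a Ramsey/compactness argument. If $B_j$ contained $N>CM/\epsilon_j$ branches $\sigma_1,\dots,\sigma_N$, we would pass successively to subsequences realizing each limsup. But the witnessing sets need not intersect, so instead we argue as in \cite{LS}: the map $n\mapsto (1_\sigma(x_n^*))_{\sigma\in\Gamma}$ and the pointwise compactness of $\{\sigma\}$ (property b)) let us first pass, by a diagonal argument over the countably many nodes, to a subsequence $L_j\subset L_{j-1}$ along which, for every $\sigma$, $\lim_{n\in L_j}1_\sigma(x_n^*)$ exists. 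The hard part is obtaining convergence for all uncountably many branches simultaneously. For this we will apply the finiteness bound to show that along a suitable subsequence only finitely many branches carry values above $\epsilon_j$ infinitely often. We then stabilize those finitely many branches by a further subsequence, so that for all other branches $\limsup\le\epsilon_j$.

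Finally, we take the diagonal subsequence $(n_k)$ across the stages and set $A=\bigcup_j\{\sigma\in B_j: \lim|1_\sigma(x_{n_k}^*)|>0\}$. This set is countable as a countable union of finite sets. For $\sigma\in A$ the limit exists and is positive by the stabilization step. For $\sigma\notin A$ we have $\limsup_k|1_\sigma(x_{n_k}^*)|\le\epsilon_j$ for every $j$, hence the limit is $0$.
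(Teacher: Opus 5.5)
The finiteness step is correct, but there is a genuine gap in the extraction step, which is the real content of the lemma.

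\textbf{The gap.} Your claim that $B_j$ is finite is false as stated. Choose nodes $t_1<t_2<\cdots$ so that each of infinitely many distinct branches $\sigma_1,\sigma_2,\dots$ contains infinitely many of the $t_n$. Then $x_n^*=e_{t_n}^*$ is a normalized block sequence with $\limsup_n|1_{\sigma_i}(x_n^*)|=1$ for every $i$.

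The remedy you sketch does not repair this. A diagonal argument over the countably many nodes only controls the coordinates $x_n^*(e_t)$, which tend to $0$ anyway for a block sequence. It says nothing about the functionals $1_\sigma$. Convergence of $1_\sigma(x_n^*)$ for every $\sigma$ is essentially the conclusion of the lemma, not something available at the start of each stage (your next sentence concedes this). Neither property b) nor Ramsey's theorem is the relevant tool. The sentence ``apply the finiteness bound to show that along a suitable subsequence only finitely many branches carry values above $\epsilon_j$ infinitely often'' is exactly what has to be proved, and you give no mechanism for it.

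\textbf{The missing idea} is to stabilize one branch at a time along nested subsequences. If some branch $\tau$ not yet chosen has $\limsup_{n\in L}|1_\tau(x_n^*)|>\epsilon_j$ along the current subsequence $L$, pass to $L'\subset L$ on which $|1_\tau(x_n^*)|>\epsilon_j$ for every $n$ and $|1_\tau(x_n^*)|$ converges. Nesting keeps every branch chosen at this stage above $\epsilon_j$ on the current subsequence. Your finiteness bound therefore stops the process after fewer than $CM/\epsilon_j$ steps, after which every other branch has $\limsup\le\epsilon_j$. With that inserted, your diagonalization and your definition of $A$ go through.

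The paper does this at all scales at once. It picks $\sigma_{k+1}$ whose limsup exceeds half of $s_{k+1}=\sup_{\sigma\notin A_k}\limsup_{n\in M_k}|1_\sigma(z_n^*)|$ and stabilizes it at $a_{k+1}>s_{k+1}/2$. Lemma~\ref{L11} then gives $a_k\to 0$, hence $s_k\to0$.

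\textbf{Your finiteness step} is a genuine simplification over the paper's route. For $\min\supp(x_n^*)>k$ one has $\sum_{i\le d}|1_{\sigma_i}(x_n^*)|=x_n^*(y)$, where $y=\sum_i\varepsilon_i\sum_{t\in\sigma_{>k,i}\cap\ran(x_n^*)}e_t$. Evaluating on $G$ gives $\|y\|\le 1+d/2^k$. So $C=2$ once $k$ is also chosen with $2^k\ge d$. You should say this explicitly: incomparability of the final segments alone does not make the constant independent of $d$. Since the bound holds for arbitrary bounded blocks, it lets you skip the paper's reduction $x_n^*\approx z_n^*+w_n^*$ through Lemmas~\ref{L_decomp} and~\ref{L_extract_upper_l2}. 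The paper needs that reduction only because Lemma~\ref{L11} is an $\ell_1$-norm argument valid for absolutely convex combinations.
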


\begin{proof}
If the sequence converges to zero in norm, then the conclusion is trivial. By a standard scaling argument, we may therefore assume that it is normalized. Apply Lemmas~\ref{L_decomp} and~\ref{L_extract_upper_l2} , choosing positive approximation errors with sum less than $1/2$. After passing to a subsequence and relabelling, we obtain block sequences $(z_n^*)_n$ and $(w_n^*)_n$ such that
\[
\|x_n^*-(z_n^*+w_n^*)\|\longrightarrow 0,
\]
where each $z_n^*$ is an absolutely convex combination of the basis and $(w_n^*)_n$ admits an upper $\ell_2$ estimate. In particular, $(w_n^*)_n$ is weakly null. Thus, for every branch $\sigma$, $1_\sigma(w_n^*)\to 0$, and hence
\[
1_\sigma(x_n^*)-1_\sigma(z_n^*)\longrightarrow 0.
\]
It therefore suffices to obtain the required subsequence and set of branches for $(z_n^*)_n$.

We proceed by constructing nested subsequences $M_0 \supset M_1 \supset M_2 \supset \dots$ and a sequence of distinct branches $\sigma_1, \sigma_2, \dots$ inductively. Let $M_0 = \N$ and $A_0 = \emptyset$.

For step $k \ge 0$, assume $M_k$ and $A_k = \{\sigma_1, \dots, \sigma_k\}$ have been defined. We evaluate the maximum possible limit supremum on all remaining branches:
\begin{equation}
s_{k+1} = \sup_{\sigma \notin A_k} \left( \limsup_{n \in M_k} |1_\sigma(z_n^*) |\right)
\end{equation}

If $s_{k+1} = 0$, the process terminates; we set the final subsequence to $M_k$ and $A = A_k$. 
If $s_{k+1} > 0$, there exists a branch $\sigma_{k+1} \notin A_k$ such that:
\begin{equation}
\limsup_{n \in M_k} |1_{\sigma_{k+1}}(z_n^*) |> \frac{s_{k+1}}{2}
\end{equation}
By the definition of the limit supremum, we extract a further subsequence $M_{k+1} \subset M_k$ such that the limit along this subsequence exists:
\begin{equation}
\lim_{n \in M_{k+1}}  |1_{\sigma_{k+1}}(z_n^*)| = a_{k+1} > \frac{s_{k+1}}{2}
\end{equation}
We then set $A_{k+1} = A_k \cup \{\sigma_{k+1}\}$ and continue the induction.

If the process does not terminate, Lemma~\ref{L11} yields  that 
\[ a_k \to 0\]

We now define the diagonal subsequence $(z_{n_k}^*)_{k=1}^\infty$, where $n_k$ is the $k$-th element of $M_k$, and let $A = \{\sigma_1, \sigma_2, \dots\}$. 
For any $\sigma_j \in A$, the diagonal sequence is eventually contained in $M_j$, yielding $\lim_{k \to \infty} |1_{\sigma_j}(z_{n_k}^*)| = a_j > 0$. 
For any branch $\sigma \notin A$, its limit supremum along the diagonal sequence is bounded by its limit supremum along $M_k$ for any $k$. Therefore, $\limsup_{k \to \infty} |1_\sigma(z_{n_k}^*)|  \le s_{k+1}$. Since $s_{k+1} \to 0$, we conclude that $\lim_{k \to \infty} |1_\sigma(z_{n_k}^* )| = 0$.
\end{proof}

\begin{theorem}\label{thm_l1_dual}The space $X_\alpha^*$ does not contain an isomorphic copy of $\ell_1$.

\end{theorem}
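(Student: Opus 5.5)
We argue by contradiction. Since $X_\alpha^*$ has the basis $(e_n^*)_n$ (the basis of $X_\alpha$ is shrinking, Corollary~\ref{C2}), an isomorphic copy of $\ell_1$ in $X_\alpha^*$ yields, via a standard perturbation (gliding hump) argument, a normalized block sequence $(x_n^*)_n$ equivalent to the unit vector basis of $\ell_1$. Every subsequence of such a sequence is still equivalent to the $\ell_1$ basis, so no subsequence is weakly null; in fact no subsequence has a weakly Cauchy further subsequence. The plan is to pass to a subsequence that is weakly Cauchy, which contradicts $\ell_1$-equivalence, since a weakly Cauchy sequence equivalent to the $\ell_1$ basis cannot exist.

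First apply Lemma~\ref{lem_countable_branch_limits} to $(x_n^*)_n$. This gives a subsequence and a countable set $A=\{\sigma_1,\sigma_2,\dots\}$ of branches with $|1_{\sigma_j}(x_n^*)|\to a_j>0$ and $1_\sigma(x_n^*)\to 0$ for $\sigma\notin A$. A diagonal argument lets us also assume that $1_{\sigma_j}(x_n^*)$ converges (with sign) for every $j$. Next consider the differences $d_k^*=x_{n_{2k}}^*-x_{n_{2k+1}}^*$ along a further subsequence $(n_k)_k$. This is a bounded block sequence with $1_\sigma(d_k^*)\to 0$ for every branch $\sigma$. For $\sigma\in A$ this holds because the signed limits coincide, and for $\sigma\notin A$ both terms tend to $0$. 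By Proposition~\ref{on block sigmas determine weak nullness}, $(d_k^*)_k$ is weakly null.

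On the other hand, if $(x_n^*)$ is equivalent to the $\ell_1$ basis with constant $c$, then the differences $d_k^*$ of disjoint pairs are again equivalent to the $\ell_1$ basis: for scalars $(b_k)$ we get $\|\sum b_k d_k^*\|\ge c\sum 2|b_k|$. This is incompatible with weak nullness (an $\ell_1$ basis is not weakly null, e.g.\ by Mazur, since convex combinations of $d_k^*$ have norm at least $2c$). This is the contradiction.

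The main obstacle is making the choice of signed limits consistent for all countably many branches simultaneously. It has to be done so that the hypothesis of Proposition~\ref{on block sigmas determine weak nullness} holds for \emph{every} branch, including those outside $A$, uniformly along one subsequence. A diagonal extraction over the countable set $A$ handles the branches in $A$, and the branches outside $A$ are handled by the second conclusion of Lemma~\ref{lem_countable_branch_limits}, which survives passing to subsequences. One should also check that the signed version is legitimate. Lemma~\ref{lem_countable_branch_limits} gives limits of absolute values, but the bounded real sequences $1_{\sigma_j}(x_n^*)$ admit convergent subsequences, and diagonalizing yields actual limits. All the deep content, namely controlling the convex-combination part $z^*$ via Lemma~\ref{L12} and the $\ell_2$ part $w^*$ via Lemma~\ref{L_extract_upper_l2}, is already packaged in Proposition~\ref{on block sigmas determine weak nullness}.
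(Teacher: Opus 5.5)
Your proposal is correct and matches the paper's proof step for step. The paper likewise applies Lemma~\ref{lem_countable_branch_limits}, diagonalizes to get signed convergence of $1_\sigma(x_n^*)$ for $\sigma\in A$, forms differences of disjoint pairs (it uses $(x_{2k}^*-x_{2k-1}^*)/2$), which are still equivalent to the $\ell_1$ basis, and applies Proposition~\ref{on block sigmas determine weak nullness} to reach the contradiction.
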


\begin{proof}
Assume, for the sake of contradiction, that $\ell_1$ embeds into $X_\alpha^*$. By the classical Bessaga-Pe\l{}czy\'{n}ski selection principle, there exists a normalized block sequence $(x_n^*)_n$ in $X_\alpha^*$ that is equivalent to the standard $\ell_1$ basis.

By Lemma~\ref{lem_countable_branch_limits}, pass to a subsequence and let $A$ be the corresponding at most countable set of branches. Passing to a further subsequence by a diagonal argument, we may assume that $1_\sigma(x_n^*)$ converges for every $\sigma\in A$; for $\sigma\notin A$, it already tends to zero. Set
\[
y_k^*=\frac{x_{2k}^*-x_{2k-1}^*}{2}.
\]
Then $(y_k^*)_k$ is a bounded block sequence equivalent to the $\ell_1$ basis, and $1_\sigma(y_k^*)\to0$ for every branch $\sigma$. By Proposition~\ref{on block sigmas determine weak nullness}, $(y_k^*)_k$ is weakly null, contradicting its equivalence to the $\ell_1$ basis.
\end{proof}

As we have mentioned in the introduction the space $JT$ is complementably  $\ell_2 $ saturated. The following is open and we believe that it has a positive answer.
\begin{question} Does every subspace $Y$ of $X^*_\alpha$  contain isomorphically $\ell_2$  complemented in $X^*_\alpha$?
\end{question}

\section{Properties of $X_\alpha^{**}$}
The final section is dedicated to the bidual space $X_\alpha^{**}$. We characterize its structure in terms of weak* limits along branches, show that the quotient $X_\alpha^{**}/X_\alpha$ is isometric to $c_0(\Gamma)$, and prove that $X_\alpha^{**}$ is complementably $\ell_2$-saturated.

Let us denote by $\sigma^{**} \in X_\alpha^{**}$ the weak* limit of the partial sums of the basis along a branch $\sigma \in \Gamma$. This limit exists since the sums over initial segments are uniformly bounded in $X_\alpha$ by Proposition \ref{prop_segment_sums}. The action of $\sigma^{**}$ on $x^* \in X_\alpha^*$ is exactly evaluated by $1_\sigma(x^*)$.

\begin{proposition}
$X_\alpha^{**} = \overline{\operatorname{span}} \left[ X_\alpha \cup \{\sigma^{**} : \sigma \in \Gamma\} \right]$.
\end{proposition}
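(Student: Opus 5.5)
Since the basis $(e_n)_n$ is shrinking (Corollary~\ref{C2}), $X_\alpha^{**}$ is identified with the scalar sequences $g=(g(n))_n$ whose partial sums $\sum_{n\le m}g(n)e_n$ are uniformly bounded, with $g$ acting on $X_\alpha^*=[e_n^*]$ as a weak$^*$ limit. The plan is to take such a $g$ and $\epsilon>0$, and exhibit finitely many branches $\tau_1,\dots,\tau_d$, scalars $\lambda_j$ and $x\in X_\alpha$ with $\|g-\sum_j\lambda_j\sigma_{\tau_j}^{**}-x\|\le C\epsilon$. The argument follows the scheme already used in Lemma~\ref{L12}.

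\textbf{Step 1 (heavy nodes).} Let $F_\epsilon=\{i:|g(e_i^*)|\ge\epsilon\}$. As in Lemma~\ref{L12}, $F_\epsilon$ contains no arbitrarily large antichains. Otherwise we build a vfg family of incomparable averages $\beta_1^*,\dots,\beta_l^*$ supported in $F_\epsilon$, with signs arranged by splitting $F_\epsilon$ into positive and negative parts. Then $\frac1{\sqrt l}\sum\beta_i^*\in G$ and $g$ evaluates to at least $\epsilon\sqrt l/2$, contradicting $\|g\|<\infty$. By Dilworth's lemma, $F_\epsilon$ minus a finite set is covered by finitely many branches $\tau_1,\dots,\tau_d$. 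We may enlarge past the finite overlaps, so that for some $N$ the final segments $\tau_{j,>N}$ are pairwise incomparable.

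\textbf{Step 2 (branch parts).} By Lemma~\ref{lem_branch_projection}, $P_{\tau_j}^{**}g\in X_{\tau_j}^{**}$. By Theorem~\ref{T2}, $P_{\tau_j}^{**}g=\lambda_j1_{\tau_j}+u_j$ with $u_j\in X_{\tau_j}$, and $1_{\tau_j}$ corresponds to $\tau_j^{**}$. Hence $h:=g-\sum_j\lambda_j\tau_j^{**}-\sum_j u_j-(\text{finite correction on }\{1,\dots,N\})$ is an element of $X_\alpha^{**}$ whose coordinates on $\bigcup_j\tau_{j,>N}$ vanish. By Step 1, all its coordinates outside a finite set have modulus $<\epsilon$, and the finitely supported remainder lies in $X_\alpha$. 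The lemma on $P_{\cup\sigma_{>n,j}}$ with norm $\le 1+\epsilon$ controls the norms of these splittings.

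\textbf{Step 3 (main obstacle).} We must show that an element $h\in X_\alpha^{**}$ with $\|h\|_\infty\le\epsilon$ (coordinatewise) is within $C\epsilon$ of $X_\alpha$. Iterating Steps 1–2 with $\epsilon\to0$ would then give the density, and in fact $\operatorname{dist}(g,X_\alpha+\operatorname{span})\to0$. A naive bound fails, because sup-norm-small vectors can have large $\alpha$-norm, through $\ell_2$ sums of many averages. I would argue as in Step 4 of the quasireflexivity theorem. If the tails $h|_{(p,\infty)}$ do not become small in a suitable sense, we extract blocks $u_k=h|_{(p_k,q_k]}$ with $\|u_k\|\ge\delta$ and $\|u_k\|_\infty\le\epsilon$. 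Lemma~\ref{L3}, after a preliminary step splitting off the first generator, whose contribution is at most $\epsilon$, then gives functionals whose generators combine into one vfg family. Summing them with weights $1/\sqrt M$ evaluates $h$ at about $(\delta-\epsilon)\sqrt M$, which contradicts boundedness unless $\delta\lesssim\epsilon$.

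From this we conclude that the tail of $h$ beyond some point has norm $O(\epsilon)$ in $X_\alpha^{**}$, where the tail is the weak$^*$ limit of the partial blocks and its norm is controlled by lower semicontinuity. The head is finitely supported and hence lies in $X_\alpha$. Making the passage from the block estimate to the norm of the infinite tail rigorous, via weak$^*$ lower semicontinuity together with the $\ell_2$ extraction, is the delicate point I expect to need the most care.
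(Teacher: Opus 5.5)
Your argument is correct, and it takes a genuinely different route from the paper's.

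\textbf{The paper's route.} The paper argues by contradiction and duality. Hahn--Banach gives $g\in X_\alpha^{***}$ of norm one that annihilates $X_\alpha$ and every $\sigma^{**}$. Since $X_\alpha^*$ is separable and contains no $\ell_1$ (Theorem~\ref{thm_l1_dual}), the Odell--Rosenthal theorem yields a bounded block sequence in $X_\alpha^*$ converging weak$^*$ to $g$ with $1_\sigma(x_n^*)\to0$ for every branch. The Section~5 criterion that such block sequences are weakly null then forces $g=0$.

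\textbf{Your route, and what each buys.} You work directly in $X_\alpha^{**}$. The heavy-node, Dilworth and Theorem~\ref{T2} step, which the paper only uses inside Lemma~\ref{L12}, peels off $\sum_j\lambda_j\tau_j^{**}+\sum_j u_j$. The vfg-concatenation trick of Lemma~\ref{L3} then shows the remainder is close to $X_\alpha$. This bypasses the decomposition lemma, the upper-$\ell_2$ extraction, Odell--Rosenthal and the non-embedding of $\ell_1$ into $X_\alpha^*$. It is also quantitative: you get $\operatorname{dist}(g,X_\alpha+\operatorname{span}\{\tau_j^{**}\})\le\epsilon$, where the $\tau_j$ are exactly the finitely many branches carrying coordinates of modulus $\ge\epsilon$. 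That is already most of the $c_0(\Gamma)$ description of the quotient. The paper's proof is shorter once the Section~5 machinery is in place, and it showcases that machinery. Both proofs ultimately rest on Theorem~\ref{T2}.

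\textbf{Two simplifications.} No finite correction is needed in Step~2. Once the final segments $\tau_{j,>N}$ are pairwise incomparable, a node $n>N$ of $\tau_j$ lies on no other $\tau_i$. Hence $g-\sum_j\lambda_j\tau_j^{**}-\sum_j u_j$ vanishes identically on $\bigcup_j\tau_{j,>N}$, and it agrees with $g$ at every other node $n>N$.

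\textbf{Closing the point you flagged in Step~3.} This step closes routinely. Fix $\delta>\epsilon$. Suppose that for every $p$ there were a block $h|_{(q,r]}$ with $q\ge p$ and norm $\ge\delta$. Choose these blocks inductively, each beyond all previously used generators. Removing the first generator of each norming functional costs at most $\|u_k\|_\infty<\epsilon$. Your $1/\sqrt M$ combination then gives $\|h\|\ge(\delta-\epsilon-\eta)\sqrt M$ for all $M$, which is impossible. So there is $p\ge N$ with $\|h|_{(p,r]}\|<\delta$ for every $r>p$. Since the basis is shrinking, $h-h|_{[1,p]}$ is the weak$^*$ limit of $h|_{(p,r]}$ as $r\to\infty$. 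Weak$^*$ lower semicontinuity of the norm therefore gives $\|h-h|_{[1,p]}\|\le\delta$, and $h|_{[1,p]}\in X_\alpha$. Hence $\operatorname{dist}(h,X_\alpha)\le\epsilon$ in a single pass. Neither an iteration in $\epsilon$ nor the $(1+\epsilon)$-bounded projections onto unions of final segments is needed.
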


\begin{proof}
If not, by the Hahn-Banach theorem there exists a functional $g \in X_\alpha^{***}$ such that $\|g\| = 1$ and 
\begin{equation}
\overline{\operatorname{span}} \left[ X_\alpha \cup \{\sigma^{**} : \sigma \in \Gamma\} \right] \subset \operatorname{Ker}(g).
\end{equation}

Since $\ell_1$ does not embed in $X_\alpha^*$ (by Theorem \ref{thm_l1_dual}), by the Odell-Rosenthal theorem \cite{OR75} there exists a sequence $(x_n^*)_n \subset X_\alpha^*$ with norm equal to $1$ such that $(x_n^*)_n$ converges weak* to $g$. 

Since $X_\alpha \subset \operatorname{Ker}(g)$, we have $g(e_m) = 0$ for all basis elements, meaning $x_n^*(e_m) \to 0$ for all $m$. By standard perturbation arguments, we may assume that $(x_n^*)_n$ is a block sequence. 

Moreover, since $\sigma^{**} \in \operatorname{Ker}(g)$ for all branches $\sigma \in \Gamma$, we have $\sigma^{**}(x_n^*) \to 0$, which means:
\begin{equation}
1_\sigma(x_n^*) \to 0 \quad \text{for all } \sigma \in \Gamma.
\end{equation}

Proposition \ref{on block sigmas determine weak nullness} shows that $(x_n^*)_n$ is weakly null in $X_\alpha^*$. This implies that its weak* limit in $X_\alpha^{***}$ must be $0$, which yields a contradiction since $\|g\| = 1$. This completes the proof.
\end{proof}

\begin{proposition}
Let $Q: X_\alpha^{**} \to X_\alpha^{**}/ X_\alpha$ be the canonical quotient map. Then:
\begin{enumerate}
    \item[(i)] The space $X_\alpha^{**}/ X_\alpha$ is isometric to $c_0(\Gamma)$, where $\Gamma$ is the set of branches of the dyadic tree.
    \item[(ii)] The quotient map $Q$ is a strictly singular operator.
\end{enumerate}
\end{proposition}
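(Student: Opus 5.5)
The plan is to prove (i) by showing that for every finite family of distinct branches $\sigma_1,\dots,\sigma_d$ and scalars $c_1,\dots,c_d$,
\[
\operatorname{dist}\Big(\sum_{j=1}^d c_j\sigma_j^{**},X_\alpha\Big)=\max_j|c_j|.
\]
By the preceding Proposition, $\{Q\sigma^{**}\}_{\sigma\in\Gamma}$ spans a dense subspace of $X_\alpha^{**}/X_\alpha$. The displayed identity then says that the map $e_\sigma\mapsto Q\sigma^{**}$ extends to an isometry of $c_0(\Gamma)$ onto the quotient.

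For the lower bound, fix $j_0$ and $y\in X_\alpha$. Apply $P_{\sigma_{j_0}}^{**}$, which has norm $1$ by Lemma~\ref{lem_branch_projection}. The images $P_{\sigma_{j_0}}^{**}\sigma_j^{**}$ for $j\ne j_0$ are finitely supported, since distinct branches share only a finite initial segment. Hence we get $c_{j_0}1_{\sigma_{j_0}}+y'$ with $y'\in X_{\sigma_{j_0}}$. Now test this against functionals $e_n^*$ with $n\in\sigma_{j_0}$ far out. Since the coordinates of $y'$ tend to zero along $\sigma_{j_0}$ (Theorem~\ref{T2}), its norm is at least $|c_{j_0}|-\epsilon$.

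For the upper bound, choose $n$ so that the final segments $\sigma_{j,>n}$ are pairwise incomparable, with $n$ as in the second lemma of Section 2, giving $\|P_{\cup_j\sigma_{j,>n}}\|\le 1+\epsilon$. The element $\sum_j c_j\sigma_j^{**}$ differs from $\sum_j c_j 1_{\sigma_{j,>n}}$ by a vector of $X_\alpha$. It therefore suffices to show that $\|\sum_j c_j 1_{\sigma_{j,>n}}\|_{**}\le(1+\epsilon')\max|c_j|$ for large $n$. For this, estimate $x^*(\sum_j c_j 1_{\sigma_{j,>n}|_{\le N}})$ for $x^*\in G$, arguing exactly as in Proposition~\ref{prop_segment_sums} and Proposition~\ref{P4}:
\begin{itemize}
\item the first generator contributes at most $\max|c_j|$, since it has $\ell_1$-norm at most $1$;
\item a later comparable average meets one branch in a consecutive portion of its alternating sum, so it contributes at most $\max|c_j|/s(\beta_i^*)$;
\item a later incomparable average meets at most $d$ nodes, so it contributes at most $d\max|c_j|/s(\beta_i^*)$.
\end{itemize}
By the vfg property and $h_{i-1}\ge n$, the tail of the square sum is at most $d^2\max|c_j|^2\sum_{i}4^{-(n+i)}$, which tends to $0$ as $n\to\infty$. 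The main delicacy is the first generator. A comparable first average restricted to an interval may have a partial block, but its $\ell_1$ norm is still at most $1$, so this term is bounded by $\max|c_j|$. Because the bound comes from $\|\cdot\|_{\ell_1}\le 1$ against a vector with $\|\cdot\|_\infty\le\max|c_j|$, the constant is exactly $1$ up to $\epsilon$.

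For (ii), let $Z\subset X_\alpha^{**}$ be infinite dimensional and suppose $Q|_Z$ is an isomorphism. Then $Q(Z)$ is an infinite-dimensional subspace of $c_0(\Gamma)$, so it contains a copy of $c_0$. Pulling back, $Z$ contains a sequence $(z_n)$ equivalent to the $c_0$ basis. Perturbing so that $Q z_n$ are disjointly supported finite combinations of the $e_\sigma$, we would like to transfer this to $X_\alpha$, or obtain a contradiction with the lower $\ell_2$ behavior. I would first try the following. Write $z_n=\sum_{\sigma\in B_n}c_\sigma\sigma^{**}+y_n$ with $y_n\in X_\alpha$, where the finite sets $B_n$ are pairwise disjoint. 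Truncate each $\sigma^{**}$ to a finite segment far down, producing vectors $u_n\in X_\alpha$ whose norms are controlled by the computation from (i). Then use the structure of vfg functionals along the incomparable tails to build, with Lemma~\ref{L3}-style gluing, functionals witnessing $\|\sum_{n\le l}u_n\|\gtrsim\sqrt{l}$. This would contradict $c_0$-equivalence.

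I expect the main obstacle to be exactly this step, namely producing the lower $\ell_2$ estimate. It is unclear how to do it when the $B_n$ involve branches whose tails interleave. An alternative is to show directly that $X_\alpha^{**}$ contains no $c_0$. Since the quotient is $c_0(\Gamma)$, that would suffice.
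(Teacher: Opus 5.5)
Your proof of (i) is correct and is essentially the paper's argument. The paper also identifies the quotient with the closed span of the $Q(\sigma^{**})$ and gets the lower bound by testing against $e_k^*$ for nodes far out on one branch. For the upper bound it evaluates vfg functionals on sums of pairwise incomparable tails, controlling the first generator by its $\ell_1$-norm and the later ones by the geometric errors coming from $s(\beta_i^*)>2^{h_{i-1}}$. Your detour through $P_{\sigma_{j_0}}^{**}$ and Theorem~\ref{T2} is harmless but unnecessary, since $e_n^*(y)\to 0$ for every $y\in X_\alpha$.

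Part (ii), however, is not proved. You leave the key step open, and the route you sketch is unlikely to work as stated. By the very estimate you prove in (i), combinations of deep truncated tails of distinct branches with bounded coefficients behave like the unit vector basis of $c_0$. Here the depth must be chosen according to the number of branches involved. So the branch parts $u_n$ cannot by themselves produce growth of order $\sqrt{l}$. Any obstruction would have to come from the $X_\alpha$-components $y_n$, and your sketch does not control them. Nor is there a general lower $\ell_2$ estimate to fall back on: by Theorem~\ref{Main_Dichotomy}, block sequences in $X_\alpha$ may be $c_0$ spreading models.

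The missing idea is exactly what your ``alternative'' needs. $X_\alpha^{**}=(X_\alpha^*)^*$ is a dual space, and by the Bessaga--Pe\l{}czy\'nski theorem, if a dual space $Y^*$ contains $c_0$ then $Y$ contains a (complemented) copy of $\ell_1$. Since $\ell_1$ does not embed into $X_\alpha^*$ (Theorem~\ref{thm_l1_dual}), $c_0$ does not embed into $X_\alpha^{**}$. Every infinite-dimensional subspace of $c_0(\Gamma)$ contains $c_0$, so $Q$ cannot be an isomorphism on any infinite-dimensional subspace. This is the paper's two-line proof, and it is where the hard work on $X_\alpha^*$ is used.
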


\begin{proof}
(i) Since for any branch $\sigma \in \Gamma$, $\sigma^{**} \notin X_\alpha$, and from the previous proposition $X_\alpha^{**}$ is generated by $X_\alpha$ and these branch limits, we conclude that:
\begin{equation}
X_\alpha^{**}/ X_\alpha = \overline{\operatorname{span}}\{ Q(\sigma^{**} ) : \sigma \in \Gamma \}
\end{equation}
In the sequel, we will denote $\tilde{\sigma} = Q(\sigma^{**})$.

Because the basis of $X_\alpha$ is bimonotone, for a finite family $\{\sigma_j\}_{j=1}^m$ of distinct branches and scalars $b_1, \dots, b_m$, the quotient norm satisfies:
\begin{equation}
\left\| \sum_{j=1}^m b_j \tilde{\sigma}_j \right\|_{X_\alpha^{**}/X_\alpha} = \lim_{n \to \infty} \left\| \sum_{j=1}^m b_j \sigma_{>n,j}^{**} \right\|_{X_\alpha^{**}}
\end{equation}
Evaluating the functional $\sum_{j=1}^m b_j \sigma_{>n,j}^{**}$ against the basis elements $e_k^*$ for $k \in \sigma_{>n, j}$ clearly yields $\left\| \sum_{j=1}^m b_j \sigma_{>n,j}^{**} \right\|_{X_\alpha^{**}} \ge \max_{1 \le j \le m} |b_j|$.

We claim that for every $\epsilon > 0$ there exists an $n \in \N$ such that:
\begin{equation}
\left\| \sum_{j=1}^m b_j \sigma_{>n,j}^{**} \right\|_{X_\alpha^{**}} \le (1+\epsilon) \max_{1 \le j \le m} |b_j|
\end{equation}
Choose $n \in \N$ sufficiently large such that the branches $\sigma_{>n,j}$ are pairwise incomparable and $\frac{2m}{2^n} < \epsilon$. Let $x^* = \sum_{i=1}^l \lambda_i \alpha_i^* \in G$ be an arbitrary functional defined by a successive vfg family of generators $\{\beta_i^*\}_{i=1}^l$. 

We examine the evaluation of $x^*$ on the sum of tails $\sum_{j=1}^m b_j \sigma_{>n,j}^{**}$. 
For the first generator $\alpha_1^*$, its support intersects the union of tails $\cup_{j=1}^m \sigma_{>n, j}$. If $\beta_1^*$ is a comparable average, it is supported on a single chain and thus intersects at most one branch tail $\sigma_{>n, j}$, giving an evaluation bounded by $|b_j| \le \max_j |b_j|$. If $\beta_1^*$ is an incomparable average, it intersects each branch tail in at most one node, so its evaluation is bounded by a convex combination of $|b_j|$, which is again $\le \max_j |b_j|$. Thus,
\begin{equation}
\left| \alpha_1^* \left( \sum_{j=1}^m b_j \sigma_{>n,j}^{**} \right) \right| \le \max_{1 \le j \le m} |b_j|
\end{equation}

For the subsequent averages $\alpha_i^*$ ($i \ge 2$), and their size satisfies
 \[s(\beta_i^*) > 2^{\max \supp(\beta_{i-1}^*)} > 2^{n+i-2} .\]
 Those  which  are comparable, their evaluations are similarly handled and bounded by
 \[\frac{\max_j |b_j|} {s(\beta_i^*)} <\frac{ \max_j |b_j|}{2^{n+i-2}} .\] 
  The  incomparable average each one  intersects the union of $m$ distinct branch tails in at most $m$ points, the evaluation of each subsequent incomparable average is bounded by $\frac{m}{2^{n+i-2}} \max_j |b_j|$. 
Summing these geometric errors gives a total contribution strictly less than $\epsilon \max_j |b_j|$.

Since $\sum \lambda_i^2 \le 1$, the total evaluation is bounded by $(1+\epsilon) \max_{1 \le j \le m} |b_j|$. Taking the limit as $n \to \infty$ shows that the quotient norm is exactly the supremum norm. Thus, $X_\alpha^{**}/ X_\alpha$ is isometric to $c_0(\Gamma)$.

(ii) The operator $Q$ is strictly singular if its restriction to any infinite-dimensional closed subspace is never an isomorphism. If $Q$ were an isomorphism on some infinite-dimensional subspace $Y \subset X_\alpha^{**}$, then $Y$ would be isomorphic to a subspace of $X_\alpha^{**}/ X_\alpha \cong c_0(\Gamma)$. Since every infinite-dimensional closed subspace of $c_0(\Gamma)$ contains an isomorphic copy of $c_0$, this would imply that $c_0$ embeds into $X_\alpha^{**}$. 

However, by a classical theorem of Bessaga and Pe\l{}czy\'{n}ski \cite{BP58}, because $X_\alpha^*$ does not contain an isomorphic copy of $\ell_1$, $X^{**}_\alpha$ does not contain an isomorphic copy of $c_0$. This contradiction proves that $Q$ must be strictly singular.
\end{proof}

\begin{lemma}
Let $(x_n)_n$ and $(x_n^*)_n$ be bounded block sequences in $X_\alpha$ and $X_\alpha^*$ respectively. Assume that both are equivalent to the $\ell_2$ basis and for every $m,n$, $x_m^*(x_n) = \delta_{m,n}$. Then the projection $P : X_\alpha^{**} \to \overline{\operatorname{span}}\{x_n\}_{n=1}^\infty$ defined as $P(x^{**}) = \sum_{n=1}^\infty x^{**}(x_n^*) x_n$ is a bounded linear operator.
\end{lemma}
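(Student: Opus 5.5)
The plan is to combine the two $\ell_2$-equivalences directly, without any finer structure of the norm. Let $C_1$ be the upper $\ell_2$ constant of $(x_n)_n$, so $\|\sum_n b_n x_n\|\le C_1(\sum_n b_n^2)^{1/2}$. Let $c_2>0$ be the lower $\ell_2$ constant of $(x_n^*)_n$, so $\|\sum_n a_n x_n^*\|\ge c_2(\sum_n a_n^2)^{1/2}$.

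First I will show that for every $x^{**}\in X_\alpha^{**}$ the sequence $(x^{**}(x_n^*))_n$ lies in $\ell_2$, with $\left(\sum_n |x^{**}(x_n^*)|^2\right)^{1/2}\le c_2^{-1}\|x^{**}\|$. Fix $N$ and set $a_n=x^{**}(x_n^*)$ for $n\le N$. If some $a_n\neq 0$, put
\[
u^*=\sum_{n\le N}\frac{a_n}{(\sum_{j\le N}a_j^2)^{1/2}}\,x_n^*.
\]
By the upper estimate implicit in the equivalence to the $\ell_2$ basis, $u^*$ is bounded, with $\|u^*\|\le C_2$ for the upper constant $C_2$ of $(x_n^*)_n$. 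Then $x^{**}(u^*)=(\sum_{n\le N}a_n^2)^{1/2}$. This gives $(\sum_{n\le N}a_n^2)^{1/2}\le C_2\|x^{**}\|$, uniformly in $N$.

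So the bound actually needed is the upper constant of $(x_n^*)_n$, not the lower one. Using the upper estimate of $(x_n^*)_n$ in this way is the whole point of this step.

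Next, since $(x_n)_n$ is equivalent to the $\ell_2$ basis, the series $\sum_n x^{**}(x_n^*)x_n$ converges in norm. Its partial sums are Cauchy because of the upper estimate and the square-summability just proved. This yields
\[
\|P(x^{**})\|\le C_1\Big(\sum_n |x^{**}(x_n^*)|^2\Big)^{1/2}\le C_1C_2\|x^{**}\|.
\]
Linearity of $P$ is clear, so $P$ is bounded with $\|P\|\le C_1C_2$.

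Finally, $P$ is a projection onto $\overline{\operatorname{span}}\{x_n\}$. Indeed, for $x^{**}=x_m$ we get $P(x_m)=\sum_n x_n^*(x_m)x_n=x_m$ by biorthogonality, and by linearity and continuity $P$ fixes the closed span. Its range is contained in that span by construction.

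I do not expect a real obstacle here. The only point needing care is the first step: one must use the upper $\ell_2$ estimate for $(x_n^*)_n$ to control the coefficients $x^{**}(x_n^*)$, together with the upper estimate for $(x_n)_n$ to sum the series. The block and bimonotonicity structure is not needed beyond guaranteeing that the equivalence constants exist.
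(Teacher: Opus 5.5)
Your proof is correct and is essentially the paper's argument: both use only the upper $\ell_2$ estimates of $(x_n)_n$ and $(x_n^*)_n$, one of them in dual form, to get $\|P\|\le C_1C_2$. The only differences are of presentation. You factor $P$ through $\ell_2$ by first bounding $\bigl(\sum_n|x^{**}(x_n^*)|^2\bigr)^{1/2}\le C_2\|x^{**}\|$. The paper instead tests $\sum_{n\le N}x^{**}(x_n^*)x_n$ against $x^*\in B_{X_\alpha^*}$, moves $x^{**}$ across the pairing, and then invokes bounded completeness. Do delete your opening claim with the lower constant $c_2^{-1}$, which goes in the wrong direction; the bound you actually prove, with $C_2$, is the correct one.
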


\begin{proof}
Assume that $(x_n)_n$ and $(x_n^*)_n$ satisfy upper $\ell_2$ bounds with constants $C$ and $D$, respectively. Fix $x^{**}\in X_\alpha^{**}$. We will show that, for arbitrary $N\in\mathbb{N}$,
\[\Big\|\sum_{n=1}^Nx^{**}(x^*_n)x_n\Big\|\leq CD\|x^{**}\|.\]
Because $(x_n)_n$ is boundedly complete, this is sufficient to conclude that the series $\sum_{n=1}^\infty x^{**}(x_n^*)x_n$ converges and $\|P\|\leq CD$. Fix $x^*\in B_{X_\alpha^*}$. Then,
\begin{align*}
x^*\Big(\sum_{n=1}^Nx^{**}(x^*_n)x_n\Big) &= x^{**}\Big(\sum_{n=1}^N x^*(x_n)x^*_n\Big)\leq D\Big(\sum_{n=1}^N\big|x^*(x_n)\big|^2\Big)^{1/2}\|x^{**}\|\\
&= Dx^*\Big(\sum_{n=1}^N\frac{x^*(x_n)}{(\sum_{m=1}^N|x^*(x_m)|^2)^{1/2}}x_n\Big)\|x^{**}\|\\
&\leq CD \|x^*\|\Big(\sum_{n=1}^N\frac{|x^*(x_n)|^2}{(\sum_{m=1}^N|x^*(x_m)|^2)}\Big)^{1/2}\|x^{**}\|\leq CD\|x^{**}\|.
\end{align*}
\end{proof}

\begin{theorem}
Let $Y$ be a closed infinite-dimensional subspace of $X_\alpha^{^{**}}$. Then $Y$ contains a further subspace $Z$ isomorphic to $\ell_2$ which is complemented in $X_\alpha^{^{**}}$.
\end{theorem}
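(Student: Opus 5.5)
Since the quotient map $Q:X_\alpha^{**}\to X_\alpha^{**}/X_\alpha$ is strictly singular, the restriction $Q|_Y$ is not an isomorphism. So for any sequence $\epsilon_n\downarrow 0$ we can choose a normalized sequence $(u_n)_n$ in $Y$ with $\|Q(u_n)\|<\epsilon_n$, and we may take it to be basic. Hence there are $x_n\in X_\alpha$ with $\|u_n-x_n\|<2\epsilon_n$. Each $u_n$ is, by the description $X_\alpha^{**}=\overline{\operatorname{span}}[X_\alpha\cup\{\sigma^{**}\}]$, close to an element of $X_\alpha$. Thus, by a small perturbation (principle of small perturbations, with $\sum\epsilon_n$ small relative to the basis constant), $Y$ contains a subspace $Y_1=[u_n]$ isomorphic to $[x_n]\subset X_\alpha$. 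Moreover, if we find a subspace of $[x_n]$ that is complemented in $X_\alpha^{**}$ by a projection $P$, the same perturbation argument gives a complemented isomorph of it inside $Y$: the operator $u\mapsto \sum x_n^*$-coordinates transferred to the $u_n$ is a small perturbation of $P$. Therefore it suffices to show that every infinite-dimensional subspace $W=[x_n]$ of $X_\alpha$ contains a copy of $\ell_2$ complemented in $X_\alpha^{**}$.

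For this step we repeat the proof of Theorem~\ref{thm_complemented_l2_Xalpha} inside $W$. Blocking and perturbing, we obtain a normalized block sequence $(y_n)$ in $W$ with $\|y_n\|_\infty\to0$. Lemma~\ref{L3} then yields a subsequence $(z_k)$, normalized so that $x_k^*(z_l)=\delta_{kl}$, equivalent to the $\ell_2$ basis, together with functionals $x_k^*\in G$ that satisfy $\operatorname{supp}x_k^*\subset\operatorname{ran}z_k$ and whose combined generators form a vfg family. By Remark~\ref{R3}, every combination $\sum\lambda_kx_k^*$ with $\sum\lambda_k^2\le1$ lies in $G$, so $(x_k^*)$ has an upper $\ell_2$ estimate with constant $1$. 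The $x_k^*$ are successive blocks in $X_\alpha^*$ with $\|x_k^*\|\ge x_k^*(z_k)=1$, so by the lower $\ell_2$ lemma of Section~5 they are also equivalent to the $\ell_2$ basis. The preceding lemma now applies to $(z_k)$ and $(x_k^*)$, and gives a bounded projection $P(x^{**})=\sum_k x^{**}(x_k^*)z_k$ from $X_\alpha^{**}$ onto $[z_k]\cong\ell_2$.

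The point needing most care is the transfer step. If $(u_n)$ is chosen directly as a perturbation of a block sequence in $X_\alpha$ (with $\sum\|u_n-x_n\|\,\|x_n^*\|$ small, using that blocks of the $z_k$ are realized in $[x_n]$), then the operator $T=I+(\text{small})$ on $X_\alpha^{**}$ maps $[z_k]$ onto a subspace of $Y$. The projection $TPT^{-1}$ is then bounded onto that subspace, and the complemented copy of $\ell_2$ inside $Y$ follows. To arrange this, I would choose the $z_k$ as blocks of the $x_n$ and the corresponding vectors in $Y$ as the same combinations of the $u_n$. The errors are then controlled, since the coefficients of the blocks have bounded $\ell_2$ sum by the upper $\ell_2$ estimate, and the $\epsilon_n$ are chosen after fixing the block lengths inductively.
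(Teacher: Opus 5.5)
Your proof is correct and follows the same route as the paper: you use strict singularity of $Q$ to move $Y$ close to $X_\alpha$, take the block sequence and functionals from Lemma~\ref{L3} and Remark~\ref{R3} as a biorthogonal pair of $\ell_2$-sequences, extend the projection to $X_\alpha^{**}$ by the preceding lemma, and perturb back into $Y$; you also supply the justification, left implicit in the paper, that the functionals $x_k^*$ are equivalent to the $\ell_2$ basis. There are two harmless slips: one only has $\|x_k^*\|\ge x_k^*(y_{n_k})>1-\epsilon$ (not $\ge 1$, since $\|z_k\|\ge 1$), and the transfer needs no inductive choice of the $\epsilon_n$, because a single invertible operator $T$ on $X_\alpha^{**}$ with $Tx_n=u_n$ and $\|T-I\|$ small carries any subspace of $[x_n]$ complemented by $P$ onto a subspace of $Y$ complemented by $TPT^{-1}$.
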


\begin{proof}
Since the quotient map $Q : X_\alpha^{^{**}} \to X_\alpha^{^{**}}/X_\alpha$ is a strictly singular operator, by standard perturbation arguments and Theorem~\ref{thm_complemented_l2_Xalpha}, there exists a normalized sequence $(z_n)_n$ in $Y$ and a block sequence $(w_n)_n$ in $X_\alpha$ such that:
\begin{enumerate}
    \item[(i)] $\sum_{n=1}^\infty \|z_n - w_n\| < \epsilon$ (for a sufficiently small $\epsilon > 0$), and
    \item[(ii)] The sequence $(w_n)_n$ is equivalent to the $\ell_2$ basis, and its biorthogonal functionals $(w_n^*)_n$ (defined on $X_\alpha$) are also equivalent to the $\ell_2$ basis.
\end{enumerate}
By the preceding lemma, because $(w_n)_n$ and $(w_n^*)_n$ are bounded block sequences equivalent to the $\ell_2$ basis satisfying $w_m^*(w_n) = \delta_{m,n}$, the canonical projection onto $\overline{\operatorname{span}}\{w_n\}_{n=1}^\infty$ extends to a bounded linear projection on $X_\alpha^{^{**}}$. Since $(z_n)_n$ is a small-norm perturbation of $(w_n)_n$, standard perturbation theory implies that the subspace $Z = \overline{\operatorname{span}}\{z_n\}_{n=1}^\infty$ is isomorphic to $\ell_2$ and complemented in $X_\alpha^{^{**}}$.
\end{proof}

\end{document}